\newcommand{\p}{\partial}
\newcommand{\bb}{\boldsymbol}
\newcommand{\bu}{\textbf{u}}
\newcommand{\bbf}{\boldsymbol{F}}
\newtheorem{mydef}{Definition}[subsection]
\newtheorem{thm}{Theorem}[section]
\newtheorem{lemma}[thm]{Lemma}
\newcommand{\bp}{\bb{P}}
\newcommand{\B}{\bb{B}}
\newcommand{\bC}{\textbf{C}}
\newcommand{\bF}{\textbf{F}}
\newcommand{\bD}{\textbf{D}}
\newcommand{\pll}{p_{\parallel}}
\newcommand{\per}{p_{\perp}}
\newcommand{\bhat}{\boldsymbol{b}}
\newcommand{\con}{\textbf{U}}
\newcommand{\sgn}{\text{sgn}}
\newcommand{\iph}{{i+\frac{1}{2}}}
\newcommand{\rev}[1]{{\textcolor{magenta}{#1}}}
\def\tsc#1{\csdef{#1}{\textsc{\lowercase{#1}}\xspace}}
\begin{document}
\let\WriteBookmarks\relax
\def\floatpagepagefraction{1}
\def\textpagefraction{.001}
\shorttitle{CGL Equations: Eigensystem Analysis and Applications to 1-D Test Problems}
\shortauthors{Singh et~al.}


\title [mode = title]{Chew, Goldberger \& Low Equations: Eigensystem Analysis and Applications to One-Dimensional Test Problems}                      

\author[1]{Chetan Singh}[]
\cormark[1]
\ead{maz218518@iitd.ac.in}

\credit{Formal analysis, Investigation, Methodology, Software, Validation, Writing – original draft}

\affiliation[1]{organization={Department of Mathematics, Indian Institute of Technology Delhi}, 
                country={India}}

\author[2]{Deepak Bhoriya}[]
\ead{dbhoriy2@nd.edu}

\credit{Formal analysis, Investigation, Methodology, Software, Writing – original draft}

\affiliation[2]{organization={Physics Department, University of Notre Dame},
	country={USA}}
	
\author[1]{Anshu Yadav}[]
\ead{anshuyadav132013@gmail.com}

\credit{Formal analysis, Investigation }

\author[1]{Harish Kumar}[]
\ead{hkumar@iitd.ac.in}

\credit{Conceptualization, Methodology, Supervision, Writing – original draft, Writing – review \& editing }

\author[2,3]{Dinshaw S. Balsara}[]
\ead{dbalsara@nd.edu}

\credit{Conceptualization, Investigation, Methodology, Software, Writing – original draft}

\affiliation[3]{organization={ACMS, University of Notre Dame},
	country={USA}}

\cortext[cor1]{Corresponding author.}

\begin{abstract}
Chew, Goldberger \& Low (CGL) equations describe one of the simplest plasma flow models that allow anisotropic pressure, i.e., pressure is modeled using a symmetric tensor described by two scalar pressure components, one parallel to the magnetic field, another perpendicular to the magnetic field. The system of equations is a non-conservative hyperbolic system. In this work, we analyze the eigensystem of the CGL equations. We present the eigenvalues and the complete set of right eigenvectors. We also prove the linear degeneracy of some of the characteristic fields. Using the eigensystem for CGL equations, we propose \rev{HLL and HLLI Riemann solvers} for the CGL system. Furthermore, we present the  AFD-WENO schemes up to the seventh order in one dimension and demonstrate the performance of the schemes on several one-dimensional test cases.
\end{abstract}

\begin{keywords}
	Non-conservative hyperbolic system \sep Eigensystem Analysis \sep  Approximated Riemann solvers for non-conservative systems \sep Riemann Problems
\end{keywords}

\maketitle

\section{Introduction}
Equations of magnetohydrodynamics are one of the simplest plasma flow models. The model treats plasma like a single fluid having the same density, velocity, and pressure. The model is indeed suitable for many plasma dynamics applications. However, in for many applications, the additional effects are essential, and they need to be incorporated in the model, e.g., particle collisions~\cite{leboeuf1979magnetohydrodynamic}, Hall current~\cite{mininni2003role}, electric current dissipation~\cite{montgomery1988minimum}, heat conduction~\cite{ju2013asymptotic} and pressure anisotropy~\cite{meng2012classical,huang2019six,Hirabayashi2016new}. 

In low-density magnetized plasma, where particle gyration and field-aligned motion are not connected by collisions, pressure anisotropy naturally develops. For example, in space and astrophysical phenomena~\cite{quataert2003radiatively,ichimaru1977bimodal,narayan1994advection} collisionless plasmas are considered, which are characterized by extremely hot and dilute gas, causing the mean free path of charged particles to exceed the system's scale size. The solar wind~\cite{chandran2011incorporating,hollweg1976collisionless,zouganelis2004transonic} and the magnetosphere of earth~\cite{burch2016magnetospheric,karimabadi2014link,dumin2002corotation,heinemann1999role} are also examples of collisionless plasmas flows. Due to this, the pressure needs to be modeled using a symmetric positive tensor. 

Several fluid and plasma flow models have considered the tensorial description of the pressure in plasma, ~\cite{sen2018entropy,meena2017positivity,meena2019robust,hakim2008extended,Hirabayashi2016new}. One of the simplest plasma flow models, which does not enforce {\em local thermodynamic equilibrium}, is proposed by Chew, Goldberger, and Low ~\cite{chew1956boltzmann}. The model is also known as the {\em double-adiabatic model} and is valid for single-fluid collisionless plasma with a strong magnetic field. In this model, pressure is modeled by a magnetic field-rotated symmetric pressure tensor (see \cite{hunana2019brief} for the detailed discussion), which is defined by two scalar components: parallel pressure component and perpendicular pressure component. 

The CGL equations are a set of hyperbolic PDEs containing non-conservative products ~\cite{singh2024entropy,hunana2019brief,huang2019six}. Due to this, developing higher-order numerical methods is highly nontrivial. One key difficulty is choosing the appropriate path, which is often unknown~\cite{dal1995definition}. In addition, the numerical solutions depend on the numerical viscosity~\cite{abgrall2010comment} of the method. In practice, a linear path is often considered to design approximate Riemann solvers~\cite{meng2012classical}. More recently, in \cite{bhoriya2024}, authors have designed higher-order finite volume WENO-based schemes. In another work \cite{singh2024entropy}, authors have presented higher-order finite-difference entropy stable schemes for the CGL system.

Another difficulty in developing numerical methods for CGL equations is that the current literature does not present a complete set of eigenvectors for the CGL system. In this work, we present a complete set of eigenvectors for the CGL system. We also analyze linearly degenerate characteristic fields. To demonstrate the application of the presented eigensystem, we use AFD-WENO schemes for one-dimensional CGL equations based on the recent work in \cite{balsara2023efficient1,balsara2024efficientnc}. This also involves the design of \rev{HLL and HLLI Riemann solvers} for CGL equations following \cite{dumbser2016new}. These schemes are tested using several one-dimensional test cases. 

The rest of the article is organized as follows: In Section \ref{sec:cgl}, we present the CGL equations. We also present a complete set of eigenvalues and eigenvectors of the system and discuss the linear degeneracy of some characteristic fields. In Section \ref{sec:afd_weno}, following \cite{balsara2024efficient,balsara2024efficientnc}, we present the AFD-WENO schemes for the CGL equations. \rev{We also present the HLL and HLLI  Riemann solvers for the CGL equations}. The numerical results are presented in Section \ref{sec:num}. We conclude in Section \ref{sec:con}.
\section{Chew, Goldberger \& Low (CGL) equations for plasma flows}
\label{sec:cgl}
Following \cite{chew1956boltzmann,meng2012classical,hunana2019brief,singh2024entropy,bhoriya2024}, the CGL equations for the variables $\con = (\rho, \rho\bu, \pll-\per, E, \B)^\top$, can be written as 
\begin{align} 
	&\frac{ \p\rho}{\p t}+\nabla\cdot(\rho \bu)=0,&\label{1}\\
	&\frac{\p \rho \bu}{\p t}+\nabla\cdot\left[\rho \bu\bu+\per \textbf{I}+(\pll-\per)\bhat\bhat-\frac{1}{4\pi}\left(\B\B-\frac{|\B|^2}{2}\textbf{I}\right)\right]=0,&\label{2}\\
	&\frac{\p (\pll-\per)}{\p t}+\nabla\cdot\left((\pll-\per) \bu\right)+\textcolor{red}{(2\pll+\per)\bhat\cdot\nabla \bu\cdot\bhat - \per\nabla\cdot\bu}=	\frac{\per-\pll}{\tau},&\label{3}\\
	&\frac{\p E}{\p t}+\nabla\cdot\bigg[\bu\left(E + \per+ \frac{|\B|^2}{8 \pi}\right) +\bu\cdot\left((\pll-\per)\bhat\bhat-\frac{\B\B}{4 \pi}\right)\bigg]=0,&\label{4}\\
	&\frac{\p \B}{\p t}+\nabla \times [-(\bu \times \B)]=0.\label{5}
\end{align}
Here $\rho$ is the density, $\bu = (u_x, u_y, u_z)^\top$ is the velocity vector, and $\B=(B_x, B_y, B_z)^\top$ is the magnetic field vector. The direction of the magnetic field vector is denoted by unit vector $\bhat$, which  is given by,
\begin{align*}
	\bhat=\frac{\B}{|\B|}=b_{x}\hat{i}+b_{y}\hat{j}+b_{z}\hat{k}.
\end{align*}
The symmetric pressure tensor $\bp$ is given by
\begin{align*}
	\bp&=\pll\bhat\bhat+\per(\textbf{I}-\bhat\bhat) \\
	&=\pll\begin{pmatrix}
		b_{x}b_{x} & b_x b_y & b_{x}b_{z}\\
		b_{y}b_{x} & b_{y}b_{y} & b_{y}b_{z}\\
		b_{z}b_{x} & b_{z}b_{y} & b_{z}b_{z}
	\end{pmatrix}+\per\begin{pmatrix}
		1-b_{x}b_{x} & -b_x b_y & -b_{x}b_{z}\\
		-b_{y}b_{x} & 1-b_{y}b_{y} & -b_{y}b_{z}\\
		-b_{z}b_{x} &  -b_{z}b_{y} & 1-b_{z}b_{z}
	\end{pmatrix}.
\end{align*}
The quantities $\pll$ and $\per$ are scalar components of the pressure tensor given by $\pll=\bp:\bhat\bhat$ and $\per=\bp:\textbf{I}-\bhat\bhat$. \rev{Here, we have ignored the effects of Finite Larmor Radius (FLR) corrections, non-gyrotropic stress tensor,
and gyroviscous stress tensor.} The average scalar pressure $p$ is given by $p=\frac{2 \per+\pll}{3}$. The system of equations is closed by assuming the ideal equation of state, 
\begin{align}
	\label{eq:EOS}
	E = \frac{\rho |\bu|^2}{2}+\frac{|\B|^2}{8 \pi}+\frac{3 p}{2}.
\end{align}
The system ~\eqref{1}-\eqref{5} contains a non-conservative product in eqn.~\eqref{3} (the non-conservative terms are colored in red). Furthermore, we have considered an equation for $\pll-\per$ as this allows us to evolve the anisotropy in the pressure components. To compute the {\em MHD} solutions (numerical solution close to isotropic MHD limit $\pll\approx\per$), we have also considered an additional source term in \eqref{3}, which is used when we want to compute {\em isotropic} (MHD) limit. 

In one dimension, similar to the case of MHD equations, the magnetic field component $B_x$ can be considered constant. Consequently, the system \eqref{1}-\eqref{5} can be written as,
\begin{equation}\label{7}
	\frac{\p \con}{\p t} + \frac{\p \bF}{\p x} + \bC(\con)\frac{\p \con}{\p x} = \textbf{S}(\con),
\end{equation}
where,
\begin{align*}
	\con=
	\begin{pmatrix}
		\rho\\
		\rho u_{x}\\
		\rho u_{y}\\
		\rho u_{z}\\
		\pll-\per\\
		E\\
		B_{y}\\
		B_{z}\\
	\end{pmatrix},~~
	\bF =
	\begin{pmatrix}
		\rho u_{x}\\
		\rho u_{x}^{2}+\per-\frac{1}{4\pi}B_{x}^{2} +\frac{|\B|^{2}}{8\pi} +(\pll-\per)b_{x}^{2}\\
		\rho u_{x}u_{y}-\frac{1}{4\pi}B_{x}B_{y}+(\pll-\per)b_{x}b_{y}\\
		\rho u_{x}u_{z}-\frac{1}{4\pi}B_{x}B_{z}+(\pll-\per)b_{x}b_{z}\\
		(\pll-\per) u_{x}\\
		\left(E+ \per+ \frac{|\B|^2}{8 \pi}\right)u_{x} -\frac{B_{x}}{4\pi}(\B\cdot\bu) + (\pll-\per)(\bu\cdot\bhat) b_{x}\\
		u_{x}B_{y} - u_{y}B_{x}\\
		u_{x}B_{z} - u_{z}B_{x}\\ 
	\end{pmatrix},
\end{align*}
\begin{align*}
	\textbf{C}(\con)=
	\begin{pmatrix}
		0 & 0 & 0 & 0 & 0 & 0 & 0 & 0 \\
		0 & 0 & 0 & 0 & 0 & 0 & 0 & 0 \\
		0 & 0 & 0 & 0 & 0 & 0 & 0 & 0 \\
		0 & 0 & 0 & 0 & 0 & 0 & 0 & 0 \\
		\frac{\per u_x}{\rho}-\frac{(2\pll+\per)(\bhat\cdot\bu)b_x}{\rho} & \frac{(2\pll+\per)b^2_{x}}{\rho}-\frac{\per}{\rho} & \frac{(2\pll+\per)b_{x}b_y}{\rho} & \frac{(2\pll+\per)b_{x}b_z}{\rho}
		& 0 & 0 & 0 & 0 \\
		0 & 0 & 0 & 0 & 0 & 0 & 0 & 0 \\
		0 & 0 & 0 & 0 & 0 & 0 & 0 & 0 \\
		0 & 0 & 0 & 0 & 0 & 0 & 0 & 0 \\
	\end{pmatrix},
\end{align*}
and
\begin{align}
	\label{eq:source}
	\textbf{S}(\con)= \left(0, 0,	0,	0,\frac{\per-\pll}{\tau}, 0, 0,	0\right)^\top .
\end{align}

\subsection{Eigenvalues and Eigenvectors}
\label{subsec:eigen}
Let us define the primitive variables as $$\textbf{W} = \left\{\rho, u_{x}, u_{y}, u_{z}, \pll, \per, B_{y}, B_{z}\right\}.$$ Then  Eqn.~\eqref{7} can be written in quasilinear form,
\begin{equation}\label{9}
	\frac{\p \textbf{W}}{\p t} + \textbf{A}\frac{\p \textbf{W}}{\p x} = \textbf{S}
\end{equation}
where the matrix $\textbf{A} = \frac{\p\textbf{W}}{\p\con}\frac{\p \bF}{\p\textbf{W}} +\frac{\p\textbf{W}}{\p\con}\bC(\con)\frac{\p \con}{\p\textbf{W}}$ is given by,
\begin{align*}
	\begin{pmatrix}
		u_x & \rho & 0 & 0 & 0 & 0 & 0 & 0\\
		0 & u_x & 0 & 0 & \frac{b_x^2}{\rho} & \frac{(1-b_x^2)}{\rho} & \frac{B_y}{4 \pi \rho}-\frac{2 b_x^2 b_y\Delta p}{\rho |\B|} & \frac{B_z}{4 \pi \rho}-\frac{2 b_x^2    b_z\Delta p}{\rho |\B|} \\
		0 & 0 & u_x & 0 & \frac{b_x b_y}{\rho} & -\frac{b_x b_y}{\rho}  & \bigg(\frac{b_x(1-2 b_y^2)\Delta p}{\rho |\B|}-\frac{B_x}{4\pi\rho}\bigg) & \frac{-2 b_x b_y b_z\Delta p}{\rho |\B|}\\
		0 & 0 & 0 & u_x & \frac{b_x b_z}{\rho} & -\frac{b_x b_z}{\rho} & \frac{-2 b_x b_y b_z\Delta p}{\rho |\B|} & \left(\frac{b_x(1-2 b_z^2)\Delta p}{\rho |\B|}-\frac{B_x}{4\pi\rho}\right)\\
		0 & p_\parallel(1+2 b_x^2) & 2p_\parallel b_x b_y & 2p_\parallel b_x b_z & u_x & 0 & 0 & 0 \\
		0 & p_\bot(2-b_x^2) & -p_\bot b_x b_y & -p_\bot b_x b_z & 0 & u_x  & 0 & 0\\
		0 & B_y & -B_x & 0 & 0 & 0  & u_x & 0\\
		0 & B_z & 0 & -B_x & 0 & 0  & 0 & u_x
	\end{pmatrix},
\end{align*}
with, $\Delta p=(\pll-\per)$. Following~\cite{kato1966propagation,meng2012classical,singh2024entropy}, the characteristic equation for the matrix $\textbf{A}$ is given by
\begin{align*}
	(\lambda-u_x)^2\xi_1(\lambda) \xi_2(\lambda) =0,
\end{align*}
where, 

$$\xi_1(\lambda)=(\lambda-u_x)^2-\left(\frac{B_x^2}{4\pi\rho}-\frac{b_x^2(\pll-\per)}{\rho}\right),$$ 
and
\begin{align*}
	\xi_2(\lambda)=(\lambda-u_x)^4-&\left(\frac{|\B|^2}{4\pi\rho}+\frac{2\per}{\rho}+\frac{b_x^2(2\pll-\per)}{\rho}\right)(\lambda-u_x)^2 \\
	&+ \left(\frac{3\pll B_x^2}{4\pi \rho^2}-\frac{3 b_x^4 \pll(\pll-\per)}{\rho^2} + \frac{b_x^2(1-b_x^2)(6\pll-\per)}{\rho^2}\right).
\end{align*}
Two roots  of the characteristic equations  are $\lambda=u_x,~u_x$ corresponding to the \textit{entropy wave} and the \textit{pressure anisotropy wave}. The roots of $\xi_1(\lambda)$ represent a pair of \textit{Alfv\'{e}n waves},
\[\lambda=u_x\pm c_a~~~~\text{where,}~c_a= \sqrt{{\frac{B_x^2}{4 \pi\rho}-\frac{(\Delta p)b_x^2}{\rho}}}.\]

The equation $\xi_2(\lambda)=0$ has four roots that are related to \textit{magnetosonic waves} and are give by,
\[\lambda=u_x\pm c_f,~u_x\pm c_s,\]
where,
\begin{align*}
	c_{f,s}= &\frac{1}{\sqrt{2\rho}}\Bigg[\frac{{|\B|}^2}{4\pi}+2p_\bot +b_x^2(2p_\parallel-p_\bot) \pm \Bigg\{\left(\frac{{|\B|}^2}{4 \pi}+2p_\bot +b_x^2(2p_\parallel-p_\bot)\right)^2\\
	&+4\left(p_\bot^2 b_x^2(1-b_x^2)-3p_\parallel p_\bot b_x^2(2-b_x^2)+3p_\parallel^2 b_x^4-\frac{3B_x^2 p_\parallel}{ 4 \pi}\right)\Bigg\}^{\frac{1}{2}}\Bigg]^{\frac{1}{2}} . 
\end{align*}
 Each sign in $c_{f,s}$ is associated with two waves that propagate symmetrically in opposite directions relative to the bulk flow speed $u_x$.The positive sign corresponds to the {\em fast magnetosonic waves}, and the negative sign corresponds to {\em slow magnetosonic waves}. The complete set of eigenvalues of the matrix $\textbf{A}$ are given by,
\[\bb{\Lambda}=\left\{u_x,~ u_x,~ u_x\pm c_a,~ u_x\pm c_f,~ u_x\pm c_s\right\}.\]

\rev{Let us consider the following notation,
\begin{align*}
\alpha_1&=|\B|^4 - 16 B_x^2 \pi p_\parallel,\\
\Upsilon_1 &= \sqrt{\alpha_1^2 + 
	8 \alpha_1(B_x^2 + 2 (B_y^2+B_z^2)) \pi  p_\perp + 
	16 (B_x^4 + 8 B_x^2 (B_y^2+B_z^2) + 4 (B_y^2+B_z^2)^2) \pi^2 p_\perp^2},&\\
	&\text{and}&\\
\Upsilon_2 &= |\B|^4+4 \pi B_x^2 \left(2 p_\parallel+ p_\perp\right)+\left(B_y^2+B_z^2\right) \left(8 \pi p_\perp\right).&
\end{align*}
A simple calculation shows that,
\begin{align}
c_s~=~  \frac{ \sqrt{\Upsilon_2-\Upsilon_1}}{2|\B|\left(\sqrt{2\pi\rho}\right)}  ~\text{   and    }~
  c_f~=~  \frac{ \sqrt{\Upsilon_2+\Upsilon_1}}{2|\B|\left(\sqrt{2\pi\rho}\right)}.  \label{eq:vector_con_1}
\end{align}}
Hence, we need $\Upsilon_2\pm\Upsilon_1\ge0$ for the system to be hyperbolic. Following \cite{kato1966propagation}, we consider pressure limits
$$P_m=\frac{p_{\bot}^2}{6p_{\bot}+\frac{3|B|^2}{4\pi}},\text{    and   }P_M=\frac{|B|^2}{4\pi}+p_{\bot}$$
and the solution set
\begin{equation}\label{10}
	\Omega = \left\{ \con \in \mathbb{R}^8 | \rho>0,\pll>0,\per>0 \text{ and } P_{m} \leq p_{\parallel} \leq P_{M}.  \right\}
\end{equation}
Unlike the MHD system, in the case of CGL, slow waves may not be slower than the Alfv\'en waves. Hence, following \cite{kato1966propagation}, we need to divide the solution set $\Omega$ in three parts, as follows,

\begin{enumerate}
	\item[(i)] $P_{m}\le p_\parallel \le\frac{P_{M}}{4},\hspace{1.4cm} \text{ if }\hspace{1.6cm}c_s\le c_{a}\le c_f,$
	\item[(ii)] $\frac{P_{M}}{4}\le p_\parallel \le\frac{1}{4}P_{M}+\frac{3P_{m}}{4},\hspace{0.2cm} \text{ if }\hspace{1.5cm} c_s\le c_{a}< c_f,$
	\item[(iii)] $\frac{P_{M}}{4}+\frac{3P_{m}}{4}\le p_\parallel \le P_{M},\hspace{0.4cm} \text{ if }\hspace{1.5cm}c_a\le c_{s}< c_f.$
\end{enumerate}
Under the above assumptions, the CGL system is hyperbolic. Furthermore, the complete set of right eigenvectors is given as follows:

The right eigenvectors corresponding to the entropy wave $u_x$ and the pressure anisotropy wave $u_x$ are denoted by $R^e_{u_{x}}$ and $R^{p}_{u_{x}}$ are given below: 
\begin{gather*}
R^e_{u_{x}}=\begin{pmatrix}
		1\\
		0\\
		0\\
		0\\
		0\\
		0\\
		0\\
		0\\
	\end{pmatrix}\quad
	\text{ and }\quad R^{p}_{u_{x}}=
	\begin{pmatrix}
		0\\
		0\\
		0\\
		0\\
		(1-b_x^2)\Delta p\\
		b_x^2\Delta p-\frac{|\B|^2}{4\pi }\\
		B_y\\
		B_z\\
	\end{pmatrix}.
\end{gather*}
The right eigenvectors corresponding to the eigenvalues $u_x\pm c_a$ are given by
\begin{gather*}R_{u_x\pm c_a}=
	\begin{pmatrix}
		0\\
		0\\
		\pm{b_z}\sgn(B_x)\sqrt{\left(\frac{|\B|^2}{4 \pi \rho}-\frac{\Delta p}{\rho }\right)}\\
		\mp{b_y}\sgn(B_x)\sqrt{\left(\frac{|\B|^2}{4 \pi \rho}-\frac{\Delta p}{\rho }\right)}\\
		0\\
		0\\
		-{B_z}\\
		{B_y}\\
	\end{pmatrix}.
\end{gather*}
\rev{The right eigenvectors corresponding to the eigenvalues $u_x\pm c_f$ and $u_x\pm c_s$ are denoted by $R_{u_x\pm c_f}$ and $R_{u_x\pm c_s}$, respectively, and are given as,
\begin{gather*}                          
	R_{u_x\pm c_f}=    \begin{pmatrix}
		\alpha_2 b_x(\Upsilon_1+\Upsilon_3)\rho \\
		\pm\frac{\alpha_3 b_x}{2\sqrt{2}}(\Upsilon_1+\Upsilon_3)\sqrt{\frac{\Upsilon_2+\Upsilon_1}{ \pi \rho}}\\
		\mp\frac{\alpha_3 b_y}{2\sqrt{2} }(\Upsilon_4-\Upsilon_1)\sqrt{\frac{\Upsilon_1+\Upsilon_2}{\pi \rho}}\\
		\mp\frac{\alpha_3 b_z}{2\sqrt{2} }(\Upsilon_4-\Upsilon_1)\sqrt{\frac{\Upsilon_1+\Upsilon_2}{ \pi \rho}}\\
		-\alpha_2 p_\parallel b_x(\Upsilon_5-3\Upsilon_1)\\
		-\alpha_2 p_\perp b_x(\Upsilon_6-\Upsilon_1)\\
		B_x B_y\\
		B_x B_z
	\end{pmatrix}\quad
	\text{ and }\quad                           
	R_{u_x\pm c_s}=     \begin{pmatrix}  
		\alpha_2 b_x(-\Upsilon_1+\Upsilon_3)\rho \\
		\pm\frac{\alpha_3 b_x}{2\sqrt{2}}(\Upsilon_3-\Upsilon_1)\sqrt{\frac{\Upsilon_2-\Upsilon_1}{\pi \rho}}\\
		\mp\frac{\alpha_3 b_y}{2\sqrt{2} }(\Upsilon_4+\Upsilon_1)\sqrt{\frac{\Upsilon_2-\Upsilon_1}{ \pi \rho}}\\
		\mp\frac{\alpha_3 b_z}{2\sqrt{2} }(\Upsilon_4+\Upsilon_1)\sqrt{\frac{\Upsilon_2-\Upsilon_1}{ \pi \rho}}\\
		-\alpha_2 p_\parallel b_x(\Upsilon_5+3\Upsilon_1)\\
		-\alpha_2 p_\perp b_x(\Upsilon_6+\Upsilon_1)\\
		B_x{B_y}\\
		B_x{B_z}         
	\end{pmatrix}, 
\end{gather*}
where $\alpha_1,~\Upsilon_1,~\Upsilon_2$ are defined earlier and $\alpha_2=\frac{1}{8 \pi p_\bot |\B|}, ~\alpha_3=\frac{1}{8 \pi p_\bot |\B|^2},$ 
\begin{align*}
\Upsilon_3 &= 4 B_x^2 \pi (4 p_\parallel-p_\perp)-|\B|^4,&\\
\Upsilon_4&=|\B|^4-4 \pi B_x^2 \left(4 p_\parallel-3 p_\perp\right)+\left(B_y^2+B_z^2\right) \left(8 \pi p_\perp\right),&\\
\Upsilon_5 &= 4 \pi p_\perp \left(3 B_x^2+4 \left(B_y^2+B_z^2\right)\right)+3|\B|^4-48 B_x^2 \pi p_\parallel,&\\
\text{  and  }&&\\
\Upsilon_6&=|\B|^4-4 \pi B_x^2 \left(4 p_\parallel- p_\perp\right)-\left(B_y^2+B_z^2\right) \left(8 \pi p_\perp\right).
\end{align*}
} 

The characteristic speed $\lambda_i(\textbf{W})$ of a hyperbolic system defines a characteristic field known as the $\lambda_i$-characteristic field. Let us recall the following definition:

\begin{mydef}
A $\lambda_i$-characteristic ﬁeld is said to be linearly degenerate if
\[\nabla\lambda_i(\textbf{W})\cdot R_i(\textbf{W})=0,~~~~~\forall~\textbf{W}\in\Omega.\]
\end{mydef}
We have the following results:
\begin{lemma}
For the CGL system, the characteristic field corresponding to the entropy wave $u_x$, the pressure anisotropy wave $u_x$, and the \textit{Alfv\'{e}n waves} $u_x\pm c_a$ are linearly degenerate.
\end{lemma}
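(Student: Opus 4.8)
The plan is to verify the linear degeneracy condition $\nabla\lambda_i(\textbf{W})\cdot R_i(\textbf{W})=0$ directly for each of the four claimed characteristic fields, working in the primitive variables $\textbf{W} = \{\rho, u_x, u_y, u_z, \pll, \per, B_y, B_z\}$. Since the gradient $\nabla\lambda_i$ is taken with respect to these eight primitive variables, the first step is to write out each eigenvalue explicitly as a function of $\textbf{W}$ and compute its gradient componentwise.

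For the entropy wave and the pressure anisotropy wave, both eigenvalues equal $\lambda = u_x$. The gradient is therefore trivial: $\nabla u_x = (0,0,0,0,0,0,0,0)^\top$ except for a single $1$ in the $u_x$-slot. Hence $\nabla\lambda\cdot R$ reduces to the second component of the corresponding right eigenvector. Inspecting $R^e_{u_x}$ and $R^p_{u_x}$ from the excerpt, both have a zero in their second (the $u_x$) entry, so the dot products vanish immediately. This case is essentially an inspection and requires no real computation.

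For the Alfv\'en waves $\lambda = u_x \pm c_a$ with $c_a = \sqrt{B_x^2/(4\pi\rho) - (\Delta p) b_x^2/\rho}$, the situation is more delicate because $c_a$ depends nontrivially on $\rho$, $\pll$, $\per$, $B_y$, and $B_z$ (through $b_x = B_x/|\B|$ and $|\B|$, with $B_x$ held constant). So $\nabla\lambda$ now has potentially nonzero entries in the $\rho$, $\pll$, $\per$, $B_y$, and $B_z$ slots. The key structural observation, however, is that the Alfv\'en eigenvector $R_{u_x\pm c_a}$ has nonzero entries only in the $u_y$, $u_z$, $B_y$, and $B_z$ components. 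Since $c_a$ is independent of $u_y$ and $u_z$, the $u_y$ and $u_z$ contributions to the dot product vanish automatically. Thus the only surviving terms are $\partial_{B_y}\lambda \cdot R^{(B_y)} + \partial_{B_z}\lambda \cdot R^{(B_z)}$, and I expect these two terms to cancel against each other. The plan is to compute $\partial_{B_y} c_a$ and $\partial_{B_z} c_a$ and show that their combination with the eigenvector entries $-B_z$ and $B_y$ sums to zero.

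The main obstacle will be this last cancellation for the Alfv\'en field, since $c_a$ depends on $B_y, B_z$ both through $|\B|^2 = B_x^2 + B_y^2 + B_z^2$ (appearing in $b_x^2 = B_x^2/|\B|^2$) and, in the more complete picture, through $\Delta p$. The cleanest route is to differentiate $c_a^2 = (B_x^2/|\B|^2 - \Delta p\, B_x^2/|\B|^2)/\rho$-type expressions and exploit the fact that $\partial_{B_y}|\B|^2 = 2B_y$ and $\partial_{B_z}|\B|^2 = 2B_z$, so that $\partial_{B_y} c_a$ is proportional to $B_y$ and $\partial_{B_z} c_a$ is proportional to $B_z$ with the same proportionality constant; multiplying by the eigenvector entries $-B_z$ and $B_y$ respectively then yields a manifestly antisymmetric sum $B_y(-B_z) + B_z(B_y)$-style cancellation. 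Once this structural proportionality is established, the degeneracy follows without grinding through the full radical.
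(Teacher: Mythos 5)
Your proposal is correct and takes essentially the same approach as the paper: a direct verification that $\nabla\lambda_i\cdot R_i=0$, where the entropy and pressure-anisotropy cases reduce to the vanishing $u_x$-entries of $R^e_{u_{x}}$ and $R^{p}_{u_{x}}$, and the Alfv\'en case reduces to the antisymmetric cancellation of the $B_y$ and $B_z$ contributions (all other slots pair a nonzero gradient entry with a zero eigenvector entry or vice versa). The paper reaches that cancellation by writing out the full gradient $\nabla(u_x\pm c_a)$ explicitly, whose $B_y$ and $B_z$ entries are proportional to $b_y$ and $b_z$ with a common factor $b_x\,\Delta p/(\rho|\B|\alpha_f)$ — exactly the proportionality you obtain structurally via the chain rule through $|\B|^2$ — so the two arguments coincide in substance.
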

\begin{proof}
For entropy wave and pressure anisotropy wave,
\[\nabla u_x = \{0,1,0,0,0,0,0,0\}.\] Now its easy to see that,
\[\nabla u_x\cdot R^e_{u_{x}} = 0 \text{      and     }\nabla u_x\cdot R^p_{u_{x}} = 0.\]
For \textit{Alfv\'{e}n waves} $u_x\pm c_a$,
\[\nabla (u_x\pm c_a) = \left\{\mp\frac{b_x}{2\rho}\alpha_f,1,0,0,\mp\frac{b_x}{2\rho\alpha_f},\pm\frac{b_x}{2\rho\alpha_f},-\frac{b_x b_y (\Delta p)}{\rho|\B|\alpha_f},-\frac{b_x b_z (\Delta p)}{\rho|\B|\alpha_f}\right\}\]
where $\alpha_f=\sqrt{\left(\frac{|\B|^2}{4 \pi \rho}-\frac{\Delta p}{\rho }\right)}$.

Now, a simple calculation shows that,
\[\nabla (u_x\pm c_a)\cdot R_{u_x\pm c_a} = 0.\]
\end{proof}
 In \cite{huang2019six}, the authors present eigenvalues of semi-relativistic MHD involving anisotropic pressure. The expressions are analogous to that of the CGL system, provided we neglect the influence of electron pressure and relativistic effects.

In the next section, we will present the AFD-WENO schemes for the CGL equations.
\section{AFD-WENO Schemes for  CGL equations}
\label{sec:afd_weno}
Let us consider the one-dimensional CGL system \eqref{7} without source terms,
\begin{equation}\label{eq:CGL_1d}
\frac{\p \con}{\p t} + \frac{\p \bF}{\p x} + \bC(\con)\frac{\p \con}{\p x} = 0.
\end{equation}
To present the AFD-WENO scheme, let us consider the mesh presented in Figure \eqref{image_1}. For each $i^{th}$-zone,  $x_{i-\frac{1}{2}}$ and $x_{i+\frac{1}{2}}$ denotes the  zone boundaries and  $x_i = \frac{x_{i-\frac{1}{2}}+x_{i+\frac{1}{2}}}{2}$ is the zone center. In addition, the size of the zone is given by $\Delta x_i=x_{i+\frac{1}{2}} - x_{i-\frac{1}{2}}$. Here, we will assume a uniform mesh size. We want to evolve the mesh function $\con_i$, which is defined pointwise at the zone centers and represents the point value of the conservative variable $\con$. 
\begin{figure}[!htbp]
\begin{center}
	\subfigure{\includegraphics[width=4.0in, height=3.0in]{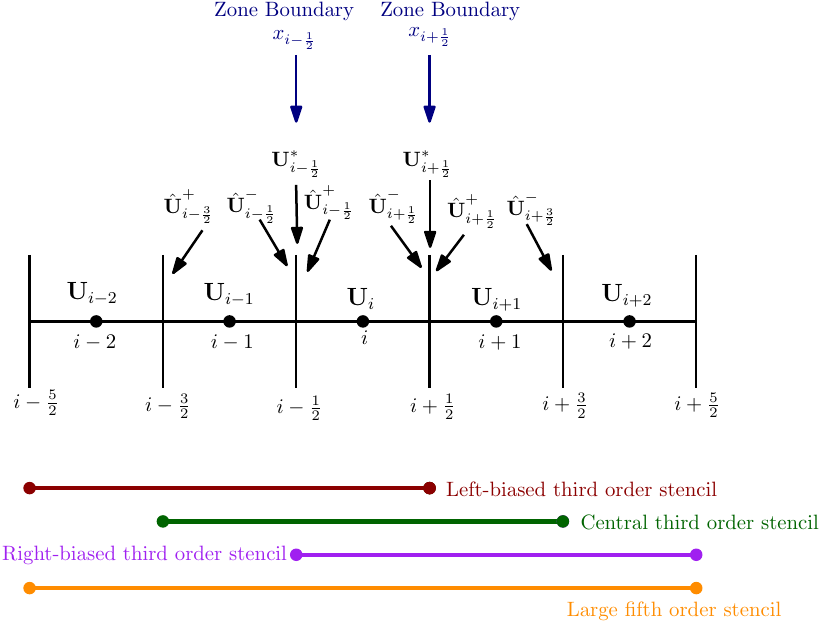}}
	\caption{Part of the mesh around zone $i$} 
	\label{image_1}    
\end{center}
\end{figure}
Using the high-order WENO-AO interpolation (see ~\cite{balsara2024efficient,balsara2016efficient,balsara2009efficient,balsara2020efficient,balsara2023efficient}.)  for the point values $\con_i$, we can now reconstruct the values,  $\hat{\con}^{+}_{i-\frac{1}{2}}$ and $\hat{\con}^{-}_{i+\frac{1}{2}}$ (we use characteristic variables-based reconstruction), which are the interpolated values at the left and right sides of the zone $i$ (see Figure \ref{image_1}). We note that $\hat{\con}^{-}_{i+\frac{1}{2}}$ is located at the left side of the zone boundary $x_{i+\frac{1}{2}}$ and $\hat{\con}^{+}_{i-\frac{1}{2}}$ is located at the right side of the zone boundary $x_{i-\frac{1}{2}}$. We carry out this reconstruction process in each zone.

Following ~\cite{balsara2024efficientnc}, the AFD-WENO scheme for \eqref{eq:CGL_1d} will have the following form:
\begin{align}\label{eq:AFD_NC}
\p_t\con_i \cong & -\frac{1}{\Delta x}\{\bD^{\ast-}(\hat{\con}^-_{i+\frac{1}{2}},\hat{\con}^+_{i+\frac{1}{2}})+\bD^{\ast+}(\hat{\con}^-_{i-\frac{1}{2}},\hat{\con}^+_{i-\frac{1}{2}})\}\nonumber\\
&-\rev{\frac{1}{\Delta x}\left\{\bF_{C}\left(\hat{\con}^-_{i+\frac{1}{2}}\right)-\bF_{C}\left(\hat{\con}^+_{i-\frac{1}{2}}\right)\right\}}-\frac{1}{\Delta x}\bC(\con_i)\left\{\hat{\con}^-_{i+\frac{1}{2}}-\hat{\con}^+_{i-\frac{1}{2}}\right\}\nonumber\\
&-\frac{1}{\Delta x}\bC(\con_i)\left[\textcolor{red}{-\frac{(\Delta x)^2}{24}[\p^2_x\hat{\con}]_{i+\frac{1}{2}}} \textcolor{blue}{+ \frac{7(\Delta x)^4}{5760}[\p^4_x\hat{\con}]_{i+\frac{1}{2}}} \textcolor{violet}{-\frac{31(\Delta x)^6}{967680}[\p^6_x\hat{\con}]_{i+\frac{1}{2}}}\right]\nonumber\\
&-\frac{1}{\Delta x}\bC(\con_i)\left[-\left[\textcolor{red}{-\frac{(\Delta x)^2}{24}[\p^2_x\hat{\con}]_{i-\frac{1}{2}}} \textcolor{blue}{+ \frac{7(\Delta x)^4}{5760}[\p^4_x\hat{\con}]_{i-\frac{1}{2}}} \textcolor{violet}{-\frac{31(\Delta x)^6}{967680}[\p^6_x\hat{\con}]_{i-\frac{1}{2}}}\right]\right]\nonumber\\
&-\frac{1}{\Delta x}\left[\textcolor{red}{-\frac{(\Delta x)^2}{24}[\p^2_x\hat{\bF}_{C}]_{i+\frac{1}{2}}} \textcolor{blue}{+ \frac{7(\Delta x)^4}{5760}[\p^4_x\hat{\bF}_C]_{i+\frac{1}{2}}} \textcolor{violet}{-\frac{31(\Delta x)^6}{967680}[\p^6_x\hat{\bF}_C]_{i+\frac{1}{2}}}\right]\nonumber\\
&-\frac{1}{\Delta x}\left[-\left[\textcolor{red}{-\frac{(\Delta x)^2}{24}[\p^2_x\hat{\bF}_C]_{i-\frac{1}{2}}} \textcolor{blue}{+ \frac{7(\Delta x)^4}{5760}[\p^4_x\hat{\bF}_C]_{i-\frac{1}{2}}} \textcolor{violet}{-\frac{31(\Delta x)^6}{967680}[\p^6_x\hat{\bF}_C]_{i-\frac{1}{2}}}\right]\right].
\end{align}

\rev{Here, the fluctuations $\bD^{\ast-}$ and $\bD^{\ast+}$ are based on the state $\con^*$ at each zone boundary resolved using the HLL or HLLI Riemann solver presented in the section \ref{subsec:HLL} and \ref{subsec:HLLI}.}

To complete the description of the scheme, we need to describe the $[\p^2_x\hat{\bbf}_{C}]_{i+\frac{1}{2}},~[\p^4_x\hat{\bbf}_{C}]_{i+\frac{1}{2}}$ and $[\p^6_x\hat{\bbf}_{C}]_{i+\frac{1}{2}}$ at the zone boundary $x_\iph$. We follow the procedure used in \cite{balsara2024efficient} and \cite{balsara2024efficientnc}.
\begin{figure}[!htbp]
\begin{center}
	\subfigure{\includegraphics[width=5.0in, height=3.0in]{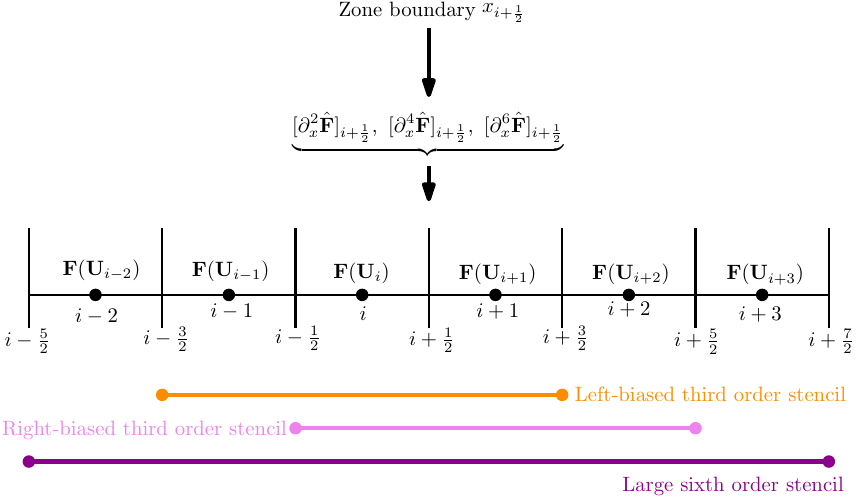}}
	\caption{part of the mesh around zone boundary $i+\frac{1}{2}$}
	\label{image_2}
\end{center}
\end{figure}
At each zone center, we compute the values $\bbf(\con_i)$. For a third-order AFD-WENO scheme, $[\p^2_x\hat{\bbf}_{C}]_{i+\frac{1}{2}}$ is needed at the zone boundary $x_{i+\frac{1}{2}}$ with third-order of accuracy. We use non-linear hybridization  to compute $[\p^2_x\hat{\bbf}_{C}]_{i+\frac{1}{2}}$ from the orange-colored stencil and the pink-colored stencil in Fig.\eqref{image_2}. For the fifth-order AFD-WENO scheme, $[\p^2_x\hat{\bbf}_{C}]_{i+\frac{1}{2}}$ and $[\p^4_x\hat{\bbf}_{C}]_{i+\frac{1}{2}}$ is needed at zone boundary $x_{i+\frac{1}{2}}$ with the fifth-order of accuracy. To achieve this, we use non-linear hybridization to compute  $[\p^2_x\hat{\bbf}_{C}]_{i+\frac{1}{2}}$ and $[\p^4_x\hat{\bbf}_{C}]_{i+\frac{1}{2}}$ from the orange-colored, pink-colored, and dark magenta-colored stencils in Fig.\eqref{image_2}. Similarly, we can obtain the higher order derivatives of the $\con$ at the zone boundaries.

With these descriptions, the Eqn.~\eqref{eq:AFD_NC} gives the final scheme. To obtain the third-order AFD-WENO scheme, we use Eqn.\eqref{eq:AFD_NC} and keep only the red terms while removing the blue and violet terms. Removing the violet terms and keeping only the red and blue terms results in the fifth-order AFD-WENO scheme. For the seventh order of the AFD-WENO scheme, we will keep all the terms.

\subsection{HLL Riemann Solver for CGL System}
\label{subsec:HLL}
To obtain the intermediate state $\con^{\ast}_{i+\frac{1}{2}}$ that overlies the zone boundary, we can use the HLL Riemann solver at $x_{i+\frac{1}{2}}$, using $\hat{\con}^{-}_{i+\frac{1}{2}}$ and $\hat{\con}^{+}_{i+\frac{1}{2}}$ as the left and right states, respectively (see Figure \ref{image_1}). Let us denote the slowest wave speed as $S^{L}_{i+1/2}$ and the fastest wave speed as $S^{R}_{i+1/2}$. Then we consider,
\begin{align*}
S^{L}_{i+1/2} =& min\left\{\Lambda_{min}\left(\hat{\con}^{-}_{i+\frac{1}{2}}\right),\Lambda_{min}\left(\hat{\con}^{+}_{i+\frac{1}{2}}\right),\Lambda_{min}\left(\frac{\hat{\con}^{-}_{i+\frac{1}{2}}+\hat{\con}^{+}_{i+\frac{1}{2}}}{2}\right)\right\},\\
S^{R}_{i+1/2} =& max\left\{\Lambda_{max}\left(\hat{\con}^{-}_{i+\frac{1}{2}}\right),\Lambda_{max}\left(\hat{\con}^{+}_{i+\frac{1}{2}}\right),\Lambda_{max}\left(\frac{\hat{\con}^{-}_{i+\frac{1}{2}}+\hat{\con}^{+}_{i+\frac{1}{2}}}{2}\right)\right\}.
\end{align*}
where $\Lambda_{min}$ is the minimum eigenvalue and $\Lambda_{max}$ is the maximum eigenvalue given in~\eqref{subsec:eigen}.
Following~\cite{dumbser2016new,balsara2023efficient1}, the intermediate state $\con^{\ast}_{i+\frac{1}{2}}$ for HLL Riemann solver is given below,

\vspace{0.5cm}
\begin{align}\label{HLL_1}
\con^{\ast}_{i+\frac{1}{2}} =& \frac{\left(S^{R}_{i+1/2}\hat{\con}^{+}_{i+\frac{1}{2}}-S^{L}_{i+1/2}\hat{\con}^{-}_{i+\frac{1}{2}}\right)-\left(\bF_{C}\left(\hat{\con}^{+}_{i+\frac{1}{2}}\right)-\bF_{C}\left(\hat{\con}^{-}_{i+\frac{1}{2}}\right)\right)}{S^{R}_{i+1/2}-S^{L}_{i+1/2}}\nonumber\\
&-\frac{\tilde{\bC}\left(\hat{\con}^{-}_{i+\frac{1}{2}},\con^{\ast}_{i+\frac{1}{2}}\right)\left(\con^{\ast}_{i+\frac{1}{2}}-\hat{\con}^{-}_{i+\frac{1}{2}}\right)   +\tilde{\bC}\left(\con^{\ast}_{i+\frac{1}{2}},\hat{\con}^{+}_{i+\frac{1}{2}}\right)\left(\hat{\con}^{+}_{i+\frac{1}{2}}-\con^{\ast}_{i+\frac{1}{2}}\right)}{S^{R}_{i+1/2}-S^{L}_{i+1/2}}.
\end{align}
Here, we need to define $\tilde{\bC}\left(\hat{\con}^{-}_{i+\frac{1}{2}},\con^{\ast}_{i+\frac{1}{2}}\right)$ and $\tilde{\bC}\left(\con^{\ast}_{i+\frac{1}{2}},\hat{\con}^{+}_{i+\frac{1}{2}}\right)$. This depends on the choice of path connecting the states. Assuming the linear paths $\psi^{-}(\xi) = \hat{\con}^{-}_{i+\frac{1}{2}}+\xi\left(\con^{\ast}_{i+\frac{1}{2}}-\hat{\con}^{-}_{i+\frac{1}{2}}\right)$ and $\psi^{+}(\xi) = \con^{\ast}_{i+\frac{1}{2}}+\xi\left(\hat{\con}^{+}_{i+\frac{1}{2}}-\con^{\ast}_{i+\frac{1}{2}}\right)$, we define,
\begin{align*}
\tilde{\bC}\left(\hat{\con}^{-}_{i+\frac{1}{2}},\con^{\ast}_{i+\frac{1}{2}}\right)=\int_{0}^{1}\bC(\psi^{-}(\xi))d{\xi}, \qquad \text{ and }\qquad 
\tilde{\bC}\left(\con^{\ast}_{i+\frac{1}{2}},\hat{\con}^{+}_{i+\frac{1}{2}}\right)=\int_{0}^{1}\bC(\psi^{+}(\xi))d{\xi}.
\end{align*}
We use numerical integration with four-point Gauss-Lobatto quadrature to evaluate the above integrals. The resulting equation is nonlinear with respect to unknown $\con^{\ast}_{i+\frac{1}{2}}$ and requires a fixed point-based iterative process. The initial guess for the first iteration can be determined by integrating along a straight line from $\hat{\con}^{-}_{i+\frac{1}{2}}$ to $\hat{\con}^{+}_{i+\frac{1}{2}}$, i.e., the first iteration is given by,
\begin{align*}
\con^{\ast}_{i+\frac{1}{2}}=\frac{\left(S^{R}_{i+1/2}\hat{\con}^{+}_{i+\frac{1}{2}}-S^{L}_{i+1/2}\hat{\con}^{-}_{i+\frac{1}{2}}\right)-\left(\bF_{C}\left(\hat{\con}^{+}_{i+\frac{1}{2}}\right)-\bF_{C}\left(\hat{\con}^{-}_{i+\frac{1}{2}}\right)\right)}{S^{R}_{i+1/2}-S^{L}_{i+1/2}}&\\
-\frac{\tilde{\bC}\left(\hat{\con}^{-}_{i+\frac{1}{2}},\hat{\con}^{+}_{i+\frac{1}{2}}\right)\left(\hat{\con}^{+}_{i+\frac{1}{2}}-\hat{\con}^{-}_{i+\frac{1}{2}}\right)}{S^{R}_{i+1/2}-S^{L}_{i+1/2}},&
\end{align*}
where, we define the matrices $\tilde{\bC}\left(\hat{\con}^{-}_{i+\frac{1}{2}},\hat{\con}^{+}_{i+\frac{1}{2}}\right)$ using the linear path $\psi(\xi) = \hat{\con}^{-}_{i+\frac{1}{2}}+\xi\left(\hat{\con}^{+}_{i+\frac{1}{2}}-\hat{\con}^{-}_{i+\frac{1}{2}}\right)$ as given below,
\begin{align*}
\tilde{\bC}\left(\hat{\con}^{-}_{i+\frac{1}{2}},\hat{\con}^{+}_{i+\frac{1}{2}}\right)&=\int_{0}^{1}\bC(\psi(\xi))d{\xi},
\end{align*}
with a four-point Gauss-Lobatto quadrature for the integration. We perform five iterations using fixed-point iteration to find $\con^{\ast}_{i+\frac{1}{2}}$. Then the fluctuations,  $\textbf{D}_{hll}^{\ast-}\left(\hat{\con}^{-}_{i+\frac{1}{2}},\hat{\con}^{+}_{i+\frac{1}{2}}\right)$ and $\textbf{D}_{hll}^{\ast+}\left(\hat{\con}^{-}_{i+\frac{1}{2}},\hat{\con}^{+}_{i+\frac{1}{2}}\right)$, are defined as follows:

\[\textbf{D}_{hll}^{\ast-}\left(\hat{\con}^{-}_{i+\frac{1}{2}},\hat{\con}^{+}_{i+\frac{1}{2}}\right) = \begin{cases}
0, & \textrm{if } S^{L}_{i+1/2}\geq 0\\
S^{L}_{i+1/2}\left(\con^{\ast}_{i+\frac{1}{2}}-\hat{\con}^{-}_{i+\frac{1}{2}}\right) + S^{R}_{i+1/2}\left(\hat{\con}^{+}_{i+\frac{1}{2}}-\con^{\ast}_{i+\frac{1}{2}}\right), & \textrm{if } S^{R}_{i+1/2}\leq 0\\
S^{L}_{i+1/2}\left(\con^{\ast}_{i+\frac{1}{2}}-\hat{\con}^{-}_{i+\frac{1}{2}}\right), &\textrm{otherwise}
\end{cases}\]
\[\textbf{D}_{hll}^{\ast+}\left(\hat{\con}^{-}_{i+\frac{1}{2}},\hat{\con}^{+}_{i+\frac{1}{2}}\right) = \begin{cases}
S^{L}_{i+1/2}\left(\con^{\ast}_{i+\frac{1}{2}}-\hat{\con}^{-}_{i+\frac{1}{2}}\right) + S^{R}_{i+1/2}\left(\hat{\con}^{+}_{i+\frac{1}{2}}-\con^{\ast}_{i+\frac{1}{2}}\right), & \textrm{if } S^{L}_{i+1/2}\geq 0\\
0, & \textrm{if } S^{R}_{i+1/2}\leq 0\\
S^{R}_{i+1/2}\left(\hat{\con}^{+}_{i+\frac{1}{2}}-\con^{\ast}_{i+\frac{1}{2}}\right) , &\textrm{otherwise}.
\end{cases}\]

\rev{\subsection{HLLI Riemann Solver for CGL System}
\label{subsec:HLLI}
Following \cite{dumbser2016new}, we present the HLLI Riemann solver for the CGL system. Let $\con^{\ast}_{i+\frac{1}{2}}$ denote the intermediate state computed using the HLL solver.   Let the diagonal matrix of eigenvalues at the state $\con^{\ast}_{i+\frac{1}{2}}$ is denoted by $\mathbf{\Lambda}\left(\con^{\ast}_{i+\frac{1}{2}}\right)$. Let us also denote the positive and negative parts of the matrix $\mathbf{\Lambda}\left(\con^{\ast}_{i+\frac{1}{2}}\right)$ as $\mathbf{\Lambda}^{-}_\ast\left(\con^{\ast}_{i+\frac{1}{2}}\right)$ and $\mathbf{\Lambda}^{+}_\ast\left(\con^{\ast}_{i+\frac{1}{2}}\right)$, respectively. They are obtained by zeroing out all of its negative and positive eigenvalues. Also, let the matrices of right and left eigenvectors be denoted by $\mathbf{R}\left(\con^{\ast}_{i+\frac{1}{2}}\right)$ and $\mathbf{L}\left(\con^{\ast}_{i+\frac{1}{2}}\right)$, respectively, such that 
$$
\mathbf{R}\left(\con^{\ast}_{i+\frac{1}{2}}\right) \mathbf{L}\left(\con^{\ast}_{i+\frac{1}{2}}\right)=\mathbf{L}\left(\con^{\ast}_{i+\frac{1}{2}}\right) \mathbf{R}\left(\con^{\ast}_{i+\frac{1}{2}}\right)=\mathbf{I}.
$$
where, $\textbf{I}$ is the identity matrix. Let us define the matrix,
\begin{equation}\label{anti_diffusion}
	\mathbf{\delta}_\ast\left(\con^{\ast}_{i+\frac{1}{2}}\right) = \textbf{I}+\frac{\mathbf{\Lambda}^{-}_\ast\left(\con^{\ast}_{i+\frac{1}{2}}\right)}{S^{L}_{i+1/2}}-\frac{\mathbf{\Lambda}^{+}_\ast\left(\con^{\ast}_{i+\frac{1}{2}}\right)}{S^{R}_{i+1/2}}.
\end{equation}
Following \cite{dumbser2016new}, we  introduce the anti-diffusion term,
\begin{equation}\label{anti_diffusion_flux}
	\Phi^{ad}_{i+\frac{1}{2}} = -\Psi_{i+\frac{1}{2}}\frac{S^{L}_{i+1/2} S^{R}_{i+1/2}}{S^{R}_{i+1/2}-S^{L}_{i+1/2}}\mathbf{R}\left(\con^{\ast}_{i+\frac{1}{2}}\right) \delta_\ast\left(\con^{\ast}_{i+\frac{1}{2}}\right) \left(\mathbf{L}\left(\con^{\ast}_{i+\frac{1}{2}}\right)\cdot\left(\hat{\con}^{+}_{i+\frac{1}{2}}-\hat{\con}^{-}_{i+\frac{1}{2}}\right)\right)
\end{equation}
where, $\Psi_{i+\frac{1}{2}}$ is a shock detector, which is $1$ when the solution is smooth and $0$ when shocks are present (see ~\cite{balsara2012self}).  Then, the fluctuations described in section~\ref{subsec:HLL} are modified to get the fluctuations for the HLLI solver, 
\[\textbf{D}_{hlli}^{\ast-}\left(\hat{\con}^{-}_{i+\frac{1}{2}},\hat{\con}^{+}_{i+\frac{1}{2}}\right) = \begin{cases}
	0, & \textrm{if } S^{L}_{i+1/2}\geq 0\\
	S^{L}_{i+1/2}\left(\con^{\ast}_{i+\frac{1}{2}}-\hat{\con}^{-}_{i+\frac{1}{2}}\right) + S^{R}_{i+1/2}\left(\hat{\con}^{+}_{i+\frac{1}{2}}-\con^{\ast}_{i+\frac{1}{2}}\right), & \textrm{if } S^{R}_{i+1/2}\leq 0\\
	S^{L}_{i+1/2}\left(\con^{\ast}_{i+\frac{1}{2}}-\hat{\con}^{-}_{i+\frac{1}{2}}\right) + \Phi^{ad}_{i+\frac{1}{2}}, &\textrm{otherwise}
\end{cases}\]
\[\textbf{D}_{hlli}^{\ast+}\left(\hat{\con}^{-}_{i+\frac{1}{2}},\hat{\con}^{+}_{i+\frac{1}{2}}\right) = \begin{cases}
	S^{L}_{i+1/2}\left(\con^{\ast}_{i+\frac{1}{2}}-\hat{\con}^{-}_{i+\frac{1}{2}}\right) + S^{R}_{i+1/2}\left(\hat{\con}^{+}_{i+\frac{1}{2}}-\con^{\ast}_{i+\frac{1}{2}}\right), & \textrm{if } S^{L}_{i+1/2}\geq 0\\
	0, & \textrm{if } S^{R}_{i+1/2}\leq 0\\
	S^{R}_{i+1/2}\left(\hat{\con}^{+}_{i+\frac{1}{2}}-\con^{\ast}_{i+\frac{1}{2}}\right) -\Phi^{ad}_{i+\frac{1}{2}} , &\textrm{otherwise}.
\end{cases}\]}

\subsection{Implementation}
We now describe the complete algorithm, which is implemented in the numerical code and given by the schemes  \eqref{eq:AFD_NC}.\\

\begin{algorithmic}[1] 
	\State At each zone $x_i$, compute the primitive variables and use them to build the physical flux $\bF(\con_i)$ and the matrix of non-conservative product $\bC(\con_i)$.
	\vspace{0.5cm}
	\State With the help of the primitive and conservative variables, compute the right eigenvector matrices $\mathbf{R}(\mathbf{U}_i)$ at each zone $x_i$. Evaluate the corresponding left eigenvectors matrices $\mathbf{L}(\mathbf{U}_i)$ such that we have the the following property:
	$$\mathbf{L}_{i}\mathbf{R}_i=\mathbf{L}_{i}\mathbf{R}_i=\mathbf{I}, \qquad \text{ where } \mathbf{R}_i=  \mathbf{R}(\mathbf{U}_i) \text{ and } \mathbf{L}_i =  \mathbf{L}(\mathbf{U}_i)$$
	\State For each zone $x_i$, project the neighbouring stencil variables into the characteristic space of zone $x_i$, i.e.,
	\[{\bf{\mathcal{{U}}}}_i^j =\mathbf{L}_{i}\con_j\] 
	for all neighbouring cells given by $j\in \mathcal{S}_i; \ \mathcal{S}_i:=\{i-s,\cdots,i-1,i,i+1,\cdots,i+s\}$ associated with $x_{i}$. For the third-order scheme, $s=1$, and for the fifth-order scheme, we choose $s=2$, and for the seventh-order scheme, we take $s=3$.
	\vspace{0.5cm}
	\State For each zone $x_i$, employ the WENO-AO algorithm on ${\bf{\mathcal{{U}}}}_i^j;$ $j\in \mathcal{S}_i$ from Section III of~\cite{balsara2024efficient} which provides the closed form formulae required for WENO interpolation in one dimension. This involves creating a non-linear hybridization of large high-order and small lower-order accurate stencils. Thus, after this step, we obtain an interpolated polynomial $\mathcal{P}_i(x)$. \vspace{0.5cm}
	\State Observe that the WENO-AO interpolation is conducted in the characteristic space. Using $\mathcal{P}_i(x)$ and the set of right eigenvectors from STEP 2, we obtain the boundary values $\hat{\con}^{+}_{i-\frac{1}{2}}$ and $\hat{\con}^{-}_{i+\frac{1}{2}}$ in the physical space as
	\[
	\hat{\con}^{+}_{i-\frac{1}{2}} = \mathbf{R}_i\mathcal{P}_i(x_{i-\frac{1}{2}})
	\quad
	\text{and}
	\quad
	\hat{\con}^{-}_{i+\frac{1}{2}} = \mathbf{R}_i\mathcal{P}_i(x_{i+\frac{1}{2}})
	\]
	\State Above three steps are costly steps of the algorithm because we are projecting all the stencils of interest into the characteristic space of each zone $i$ using eigenvectors. At the end of these steps, we get WENO interpolation-based $\hat{\con}^{-}_{i+\frac{1}{2}}$ and $\hat{\con}^{+}_{i-\frac{1}{2}}$ at each zone boundary.\vspace{0.5cm}
	\State With boundary values in hand at each interface $x_{i+\frac{1}{2}}$, use two states: $ \hat{\con}^{-}_{i+\frac{1}{2}},\ \hat{\con}^{+}_{i+\frac{1}{2}}$ and follow section~\ref{subsec:HLL},~\ref{subsec:HLLI} and~\cite{balsara2023efficient1} to obtain the slowest $S^{L}_{i+1/2}$ and fastest speeds $S^{R}_{i+1/2}$ of the Riemann fan. \vspace{0.5cm}
	\State Using these boundary states and speeds, use the Riemann solver to obtain the intermediate state $\con^{\ast}_{i+\frac{1}{2}}$ and the left- and right-going fluctuations $\textbf{D}^{\ast-}\left(\hat{\con}^{-}_{i+\frac{1}{2}},\hat{\con}^{+}_{i+\frac{1}{2}}\right)$ and $\textbf{D}^{\ast+}\left(\hat{\con}^{-}_{i+\frac{1}{2}},\hat{\con}^{+}_{i+\frac{1}{2}}\right)$ at each zone boundary $x_{i+\frac{1}{2}}$ using either HLL or HLLI solver. \vspace{0.5cm}
	\State Now, if we want to make a characteristic projection of the higher derivatives of the flux variable, we can do so with $\con^{\ast}_{i+\frac{1}{2}}$. As a result, we determine the right and left eigenvector matrices for the intermediate state $\con^{\ast}_{i+\frac{1}{2}}$ at each zone boundary $x_{i+\frac{1}{2}}$. Following ~\cite{balsara2024efficient,balsara2024efficientnc}, this characteristic projection is not needed, allowing us to apply WENO interpolation directly to flux components.\vspace{0.5cm}
	\State Use the boundary-centered WENO-AO interpolation scheme from Section IV of~\cite{balsara2024efficient} and~\cite{balsara2024efficientnc} to obtain suitably high order derivatives $\left([\p^2_x\hat{\bF}_{C}]_{i+\frac{1}{2}},~[\p^4_x\hat{\bF}_{C}]_{i+\frac{1}{2}},~[\p^6_x\hat{\bF}_{C}]_{i+\frac{1}{2}}\right)$ of the flux variables at each zone boundary $x_{i+\frac{1}{2}}$.\vspace{0.5cm}
	\State Similarly, we can get high order derivatives of $\left([\p^2_x\hat{\con}]_{i+\frac{1}{2}},~[\p^4_x\hat{\con}]_{i+\frac{1}{2}},~[\p^6_x\hat{\con}]_{i+\frac{1}{2}}\right)$ of conservative variable $\con$ at each zone boundary $x_{i+\frac{1}{2}}$.\vspace{0.5cm}
\end{algorithmic}

\section{Numerical results}
\label{sec:num}
For the time discretization, we use a third-order SSP-Runge-Kutta (RK) scheme~\cite{shu1988efficient,spiteri2002new,spiteri2003non}. To compute the {\em isotropic} limit, we consider the stiff source term \eqref{eq:source} with $\tau = 10^{-8}$. Hence, explicit methods will be highly inefficient. So, when stiff source terms are present, we will use the third-order Runge-Kutta IMEX methods presented in~\cite{pareschi2005implicit,kupka2012total}. To reduce the oscillations, we use a flattener, which is described in~\cite{balsara1998total,balsara2024efficient}.

For the accuracy test cases of the fifth and seventh-order schemes, we reduced the timestep size as the mesh was refined so that the temporal error remained dominated by the spatial error. Every mesh doubling necessitates a timestep reduction that follows the formula $\Delta t \rightarrow\Delta t (1/2)^{5/3}$ when a temporally third-order accurate time-stepping method is combined with a spatially fifth-order scheme. In a similar way, if a temporally third-order time-stepping technique is combined with a spatially seventh-order system, then each mesh doubling necessitates a $\Delta t \rightarrow\Delta t (1/2)^{7/3}$ timestep reduction. We use the CFL number $0.3$ for the accuracy test cases. For all other one-dimensional test cases, we use the CFL number to be $0.8$.

We present two test cases to demonstrate the accuracy of the proposed schemes. Then, we present several test cases in one dimension to demonstrate the stability and performance of the proposed schemes. Several of these test cases are generalized from the MHD test cases. To compare the CGL results with MHD results, we compute the MHD reference solutions using a second-order scheme with Rusanov solver and \textit{MinMod} limiter for spatial discretization and second-order SSP-RK method for the time update. We use $10000$ cells to obtain the reference MHD solutions.

\subsection{Accuracy test}
\label{test:Accuracytest}
In this test case, we ignore the source terms. We consider a test case with smooth initial conditions from~\cite{singh2024entropy}. We consider the computational domain of $[0, 1]$ with periodic boundary conditions. The initial density profile is taken to be $\rho(x, 0) = 2 + \sin{(2\pi x)}$. Density is then advected with velocity $\bu = (1, 0, 0)$. The pressure components are taken to be constant with $\per = \pll = 1.0$. The magnetic field is taken to be $\B = (1, 1, 0)$. The exact solution is then given by $\rho(x, t) = 2 + \sin{(2\pi(x - t))}$. The simulations are performed till the final time $t = 2.0$.

\begin{table}[ht]
\centering
\begin{tabular}{ |c|c|c|c|c| } 
	\hline
	\multicolumn{5}{|c|}{\textbf{$L_1$ and $L_\infty$ Errors and Order of Accuracy for $\rho$}}\\
	\hline
	\textbf{Order 3} & $L_1$ Error & $L_1$ Accuracy &  $L_\infty$ Error & $L_\infty$ Accuracy  \\
	\hline
	10 & 3.79E-01 & $--$ & 6.52E-02 & $--$ \\
	\hline
	20	 & 9.76E-02 & 1.955522317 & 1.11E-02 & 2.549737343 \\
	\hline
	40 & 3.03E-02 & 1.685234163	& 2.12E-03 & 2.395857442 \\
	\hline
	80	 & 6.55E-03 & 2.21258628 & 3.73E-04 & 2.502853459 \\
	\hline
	160	& 1.31E-03 & 2.316442269 & 6.37E-05 & 2.551358214 \\
	\hline
	320 & 2.52E-04 & 2.382112478 & 1.07E-05 & 2.579989011\\
	\hline
	\textbf{Order 5} &  &  &  &   \\ 
	\hline
	10 & 9.49E-03 & $--$ & 1.55E-03 & $--$ \\
	\hline
	20	 & 2.94E-04 & 5.013838473 & 2.88E-05 & 5.751974248 \\
	\hline
	40 & 9.04E-06 & 5.021341591	& 4.73E-07 & 5.925146837 \\
	\hline
	80	& 2.81E-07 & 5.007533709 & 7.48E-09 & 5.983669743 \\
	\hline
	160	& 8.79E-09 & 4.999485867 & 1.17E-10 & 5.997987111 \\
	\hline
	\textbf{Order 7} &  &  & & \\ 
	\hline
	10 & 9.15E-04 & $--$ & 1.52E-04 & $--$ \\
	\hline
	20 & 6.11E-06 &	7.227753471 & 6.16E-07 & 7.946465664 \\
	\hline
	40 & 4.66E-08 & 7.035223584	& 2.45E-09 & 7.976163789 \\
	\hline
	80	 & 3.64E-10 & 6.998951661 & 9.54E-12 & 8.002131627 \\
	\hline
\end{tabular}
\caption{\textbf{\nameref{test:Accuracytest}:} $L_1$ and $L_\infty$ errors and order of accuracy of $\rho$ for the 3rd, 5th and 7th order AFD-WENO schemes using HLL Riemann solver.}
\label{tab:1}
\end{table}
\begin{table}[ht]
\centering
\begin{tabular}{ |c|c|c|c|c| } 
	\hline
	\multicolumn{5}{|c|}{\textbf{$L_1$ and $L_\infty$ Errors and Order of Accuracy for $\rho$}}\\
	\hline
	\textbf{Order 3} & $L_1$ Error & $L_1$ Accuracy &  $L_\infty$ Error & $L_\infty$ Accuracy  \\
	\hline
	10 & 3.66E-01 & $--$ & 6.33E-02 & $--$ \\
	\hline
	20	 & 9.52E-02 & 1.943517796 & 1.02E-02 & 2.636645366 \\
	\hline
	40 & 2.77E-02 & 1.781955009 & 1.89E-03 & 2.430993592 \\
	\hline
	80	 & 6.06E-03	& 2.190547617 &	3.31E-04 & 2.512635623 \\
	\hline
	160	& 1.20E-03 & 2.333828379 & 5.65E-05 & 2.550057852 \\
	\hline
	320 & 2.32E-04 & 2.372723485 & 9.48E-06 & 2.5753723 \\
	\hline
	\textbf{Order 5} &  &  &  &   \\ 
	\hline
	10 & 9.04E-03 & $--$ & 1.40E-03 & $--$ \\
	\hline
	20	 &  2.85E-04 & 4.986276706 & 2.24E-05 &	5.959755971 \\
	\hline
	40 &  8.95E-06 & 4.994007241 & 3.52E-07 & 5.995243159 \\
	\hline
	80	&  2.80E-07 & 4.997824175 &	5.50E-09 & 5.998486558 \\
	\hline
	160	&  8.76E-09 & 4.999123028 &	8.60E-11 & 5.999344056 \\
	\hline
	\textbf{Order 7} &  &  & & \\ 
	\hline
	10 & 8.69E-04 & $--$ & 1.32E-04 & $--$ \\
	\hline
	20 & 5.98E-06 &	7.183576421 & 4.70E-07 & 8.13878327 \\
	\hline
	40 & 4.61E-08 & 7.018615212	& 1.81E-09 & 8.020269672 \\
	\hline
	80	 & 3.61E-10 & 6.994991439 & 7.09E-12 & 7.996296319 \\
	\hline
\end{tabular}
\caption{\textbf{\nameref{test:Accuracytest}:} $L_1$ and $L_\infty$ errors and order of accuracy of $\rho$ for the 3rd, 5th and 7th order AFD-WENO schemes using HLLI Riemann solver.}
\label{tab:1.1}
\end{table}
\rev{In Tables \eqref{tab:1} and \eqref{tab:1.1}, we present the  $L_1$-errors and $L_\infty$-errors of the density profile for the 3rd, 5th, and 7th order AFD-WENO schemes using HLL and HLLI Riemann solvers, respectively.}  We observe that in both norms, the 5th and 7th-order schemes achieve the desired order of accuracy. The 3rd order scheme, however, has a slightly lower order of accuracy. This is consistent with the results in \cite{balsara2024efficientnc,balsara2024efficient}. Furthermore, we also note that the HLLI solver has lower errors than the HLL solver.

\subsection{Circularly polarized Alfv\'en waves}
\label{test:Circularlypolarized}
In this test case, following \cite{Hirabayashi2016new}, we extend the circular polarized Alfv\'en propagation along the magnetic field lines for the isotropic MHD equation to the CGL equations. For the MHD test case, the exact solution is given by,
\begin{align} 
	u_y &= \delta u \sin{(kx - \omega t)},~~~~~~~~~~~~~u_z = \delta u \cos{(kx - \omega t)}&\\
	B_y &= \sqrt{4\pi}\times\delta B \sin{(kx - \omega t)},~~B_z = \sqrt{4\pi}\times\delta B \cos{(kx - \omega t)}
\end{align} 
for constant density, pressure, normal velocity, and a normal magnetic field. The Walen relation connects the amplitudes of velocity and magnetic field as follows:
\begin{equation}\label{31}
	\frac{\delta u}{V_A} = \frac{\delta B}{B_x}
\end{equation}
In the case of the CGL model, the pressure anisotropy $\pll-\per$ affects the magnetic tension force by a factor of $\epsilon = 1 - \frac{\pll-\per}{|\B|^2}$ compared to the MHD case. Again, we do not consider the source terms. Significantly, this modification also modifies the standard Alfv\'en velocity $V_A$ in the Walden relation with a factor of $\sqrt{\epsilon}$, i.e., $V^\ast_A = \sqrt{\epsilon} V_A$, which need to take into consideration for the CGL system. 
The pressures and density were held constant with $\rho = \per = \pll = 1.0$. Hence $\epsilon=1.0$. Other parameter are taken to be as follows: $u_x = 0$, $B_x = \sqrt{4\pi}$, $V_A =u_x + \sqrt{{\frac{B_x^2}{4 \pi\rho}-\frac{(P_\parallel-P_\perp)b_x^2}{\rho}}}$ and $\delta B=0.1$. We also take  $L_x=1$ and $k =\frac{1}{2\pi}$. The simulations are carried out till final time of $t = \frac {5 L_x}{V^\ast_A}$, on the computational domain of $[0, 1]$ with periodic boundary conditions.

\begin{table}
\centering
\begin{tabular}{ |c|c|c|c|c| }
	\hline
	\multicolumn{5}{|c|}{\textbf{$L_1$ and $L_\infty$ Errors and Order of Accuracy for $B_y$}}\\
	\hline
	\textbf{Order 3} & $L_1$ Error & $L_1$ Accuracy &  $L_\infty$ Error & $L_\infty$ Accuracy  \\
	\hline
	10 & 1.60E-01 & $--$ & 2.47E-02 & $--$ \\
	\hline
	20	 & 4.24E-02 & 1.91337666 & 3.31E-03 & 2.896561328 \\
	\hline
	40 & 9.38E-03 & 2.175908167	& 3.67E-04 & 3.174751321 \\
	\hline
	80	 & 2.22E-03 & 2.076966688 & 4.36E-05 & 3.07239535 \\
	\hline
	160	& 5.47E-04 & 2.022350079 & 5.37E-06 & 3.021399766 \\
	\hline
	320 & 1.36E-04 & 2.004415387 & 6.69E-07 & 3.004181062\\
	\hline
	\textbf{Order 5} &  &  &   &  \\
	\hline
	10 & 1.36E-02 & $--$ & 2.10E-03 & $--$ \\
	\hline
	20	 & 4.32E-04 & 4.974156817 & 3.39E-05 & 5.949052993 \\
	\hline
	40 & 1.36E-05 & 4.983639858	& 5.36E-07 & 5.984577939 \\
	\hline
	80	 & 4.28E-07 & 4.995082389 & 8.40E-09 & 5.995348941 \\
	\hline
	160	& 1.34E-08 & 4.998296981 & 1.31E-10 & 5.998362307 \\
	\hline
	\textbf{Order 7} &  &  & & \\
	\hline
	10 & 1.47E-03 & $--$ & 2.27E-04 & $--$ \\
	\hline
	20	 & 9.14E-06 & 7.326078469 & 7.19E-07 & 8.300300644 \\
	\hline
	40 & 7.03E-08 & 7.022848348	& 2.76E-09 & 8.024072751 \\
	\hline
	80	 & 5.52E-10 & 6.992343352 & 1.08E-11 & 7.993190315 \\
	\hline
\end{tabular}
\caption{\textbf{\nameref{test:Circularlypolarized}:} $L_1$ and $L_\infty$ errors and order of accuracy of $B_y$ for the 3rd, 5th and 7th order AFD-WENO schemes using HLL Riemann solver..}
\label{tab:2}
\end{table}

\begin{table}
\centering
\begin{tabular}{ |c|c|c|c|c| }
	\hline
	\multicolumn{5}{|c|}{\textbf{$L_1$ and $L_\infty$ Errors and Order of Accuracy for $B_y$}}\\
	\hline
	\textbf{Order 3} & $L_1$ Error & $L_1$ Accuracy &  $L_\infty$ Error & $L_\infty$ Accuracy  \\
	\hline
	10 & 1.18E-01 & $--$ & 1.82E-02 & $--$ \\
	\hline
	20	 &  3.39E-02 & 1.794015198 & 2.63E-03 & 2.79078267 \\
	\hline
	40 &  8.67E-03 & 1.968794666 & 3.39E-04 & 2.952393485 \\
	\hline
	80	 &  2.17E-03 & 1.995325632 & 4.27E-05 & 2.991864559 \\
	\hline
	160	&  5.44E-04 & 1.999136678 & 5.34E-06 & 2.998561641 \\
	\hline
	320 &  1.36E-04 & 1.998375754 & 6.68E-07 & 2.998265677 \\
	\hline
	\textbf{Order 5} &  &  &   &  \\
	\hline
	10 & 9.57E-03 & $--$ & 1.48E-03 & $--$ \\
	\hline
	20	 &  2.66E-04 & 5.166707771 & 2.07E-05 & 6.160904491 \\
	\hline
	40 &  8.04E-06 & 5.050353157 & 3.14E-07 & 6.037565543 \\
	\hline
	80	 &  2.49E-07 & 5.012555797 & 4.88E-09 & 6.008684932 \\
	\hline
	160	&  7.77E-09	& 5.002372481 & 7.62E-11 & 6.001378352 \\
	\hline
	\textbf{Order 7} &  &  & & \\
	\hline
	10 & 1.07E-03 & $--$ & 1.65E-04 & $--$ \\
	\hline
	20	 & 5.78E-06 & 7.52708685 & 4.49E-07 & 8.519879058 \\
	\hline
	40 & 4.16E-08 & 7.117318317 & 1.63E-09 & 8.106495695 \\
	\hline
	80	& 3.22E-10 & 7.014552697 & 6.32E-12 & 8.008004036 \\
	\hline
\end{tabular}
\caption{\textbf{\nameref{test:Circularlypolarized}:} $L_1$ and $L_\infty$ errors and order of accuracy of $B_y$ for the 3rd, 5th and 7th order AFD-WENO schemes using HLLI Riemann solver.}
\label{tab:2.1}
\end{table}
\rev{In Tables \eqref{tab:2} and  \eqref{tab:2.1}, we have presented the $L_1$-errors and $L_\infty$-errors of $B_y$ for the 3rd, 5th, and 7th order AFD-WENO schemes using HLL and HLLI Riemann solvers, respectively.} We observe that both 5th, and 7th order schemes have the desired accuracy in $L_1$ norm and one higher order in $L_\infty$ norm. For the 3rd order scheme, we note that $L_1$-accuracy is one order less than the expected third order. Similar to the earlier case, we also observe that the HLLI solver is more accurate than the HLL solver.

\subsection{One-Dimensional reconnection layer}
\label{test:reconnectionlayer}
For this test case, we consider the one-dimensional Riemann problem described in \cite{Hirabayashi2016new,Hirabayashi2013Magnetic}, which is used to examine the various waves in a self-similarly evolving reconnection layer. We do not consider the source terms. It involves the behavior of slow-mode and Alfven waves in the CGL approximation. Unlike the previous test case, this test is a non-coplanar problem. The computational domain is  $[-200 L, 200 L]$ with free boundary conditions. The initial setup involves an isotropic and isothermal Harris-type current sheet featuring a uniform guide field,
\begin{align}
	B_{y}(x) &= B_{0}\sqrt{4\pi}\cos{(\phi)}\tanh{\left(\frac{x}{L}\right)}\\
	B_{z}(x) &= B_{0}\sqrt{4\pi}\sin{(\phi)}
\end{align}
Here, the magnetic field strength in the lobe region, denoted as $B_0$, is a critical parameter. Additionally, $\phi$ represents the angle between the lobe magnetic field and the $x$-axis, while $L$ corresponds to the half-width of the current sheet. To achieve an initial isotropic pressure balance, adjustments are made to set the plasma beta $(\beta = \frac{2p}{|\B|^2})$, as measured in the lobe region, to a value of $0.25$. The normal magnetic field component $B_x$ is assumed to be  $5 \%$ of $B_0$. This leads to the evolution of several waves moving away from the current sheets and towards both lobes.

Upon introducing the normal magnetic field component, $B_x$, various wave phenomena are observed. Specifically, fast rarefaction waves, rotational discontinuities, and slow shocks propagate away from the current sheet toward both lobes. The magnitude of $B_x$ is set at $\sqrt{4\pi}\times(5\%~of~B_0)$. To normalize, we take $L$, $B_0$, and the lobe density $\rho_0$ to be unity. This leads to the velocity and pressure being scaled by $V_A = \frac{B_0}{\sqrt{\rho_0}}$ and $B_0^2$, respectively.

We take velocity $u_x = 0$, $u_y = 0$ and $u_z = 0$ and pressure $\pll = \per = 0.125$. We set angle $\phi=30^{\circ}$. We use $2000$ cells and the final time of $t=3500$.
\begin{figure}[!htbp]
\begin{center}
	\subfigure[$\rho$]{\includegraphics[width=2.0in, height=1.5in]{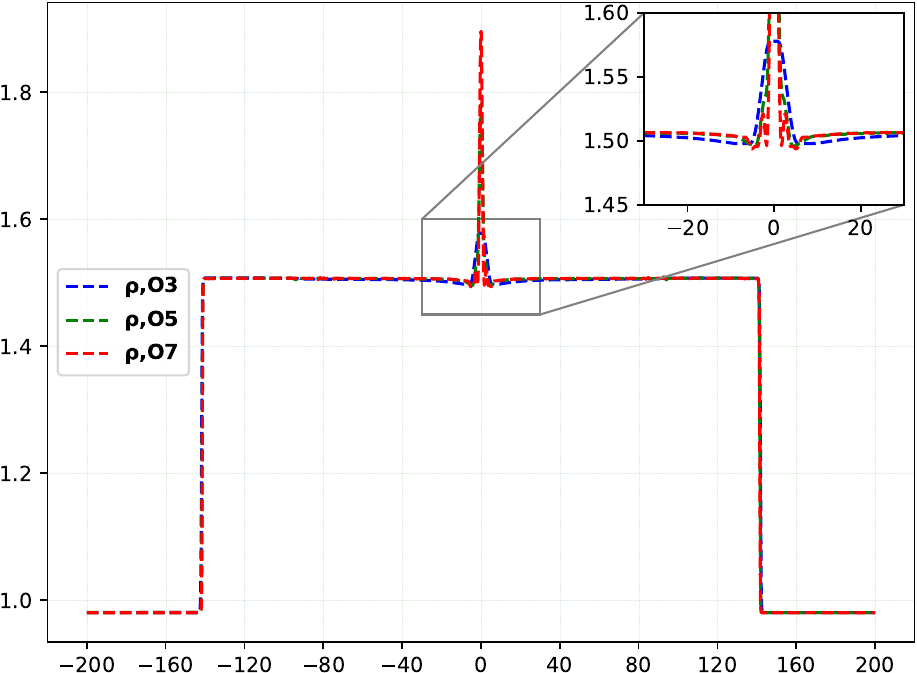}}
	\subfigure[$u_x~\&~u_y~\&~u_z$]{\includegraphics[width=2.0in, height=1.5in]{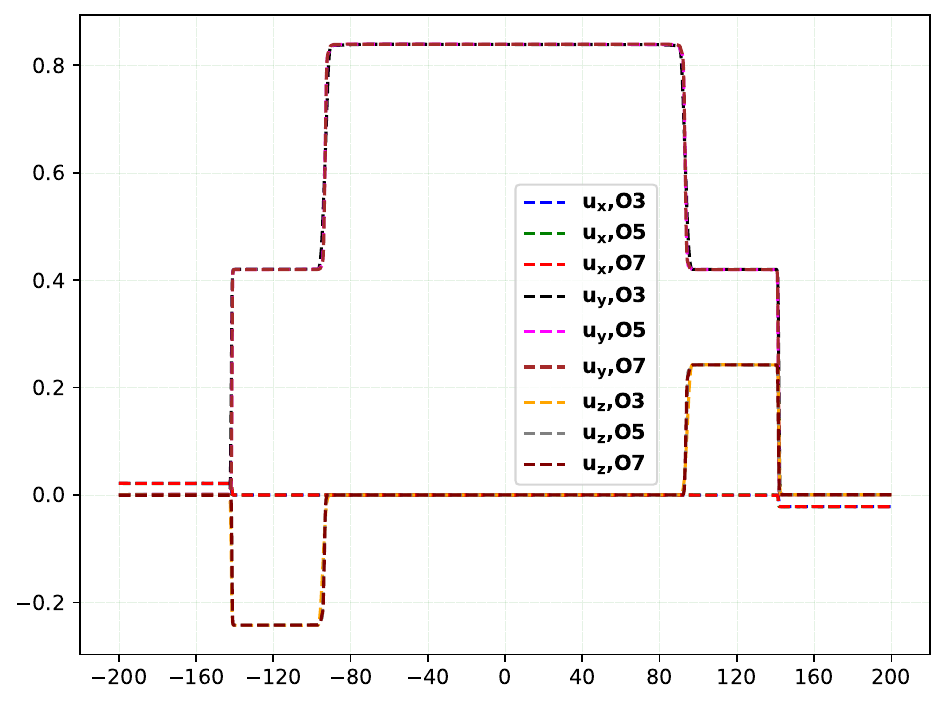}}
	\subfigure[$\pll$]{\includegraphics[width=2.0in, height=1.5in]{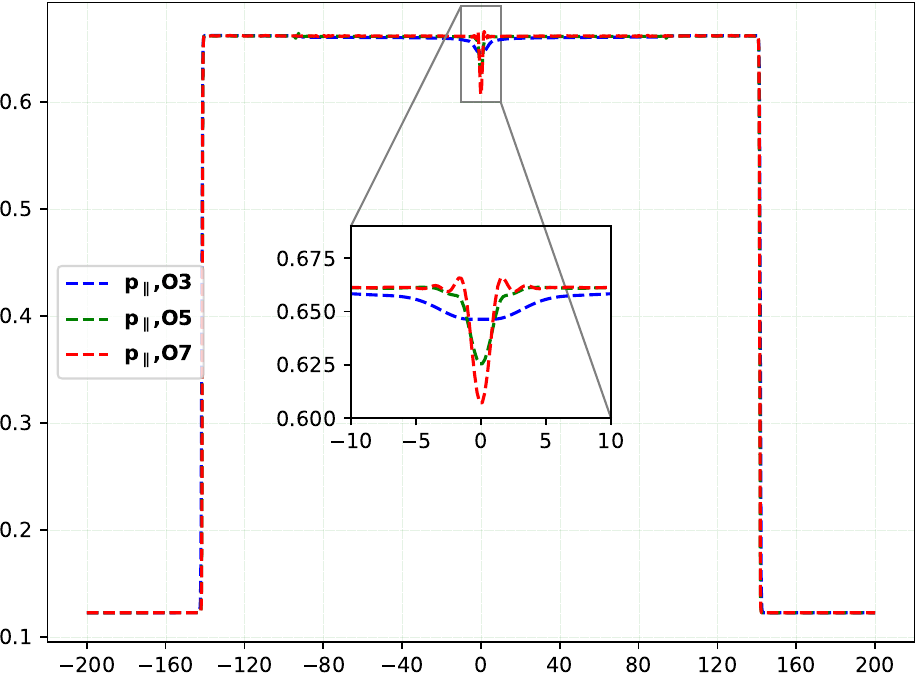}}
	\subfigure[$\per$ ]{\includegraphics[width=2.0in, height=1.5in]{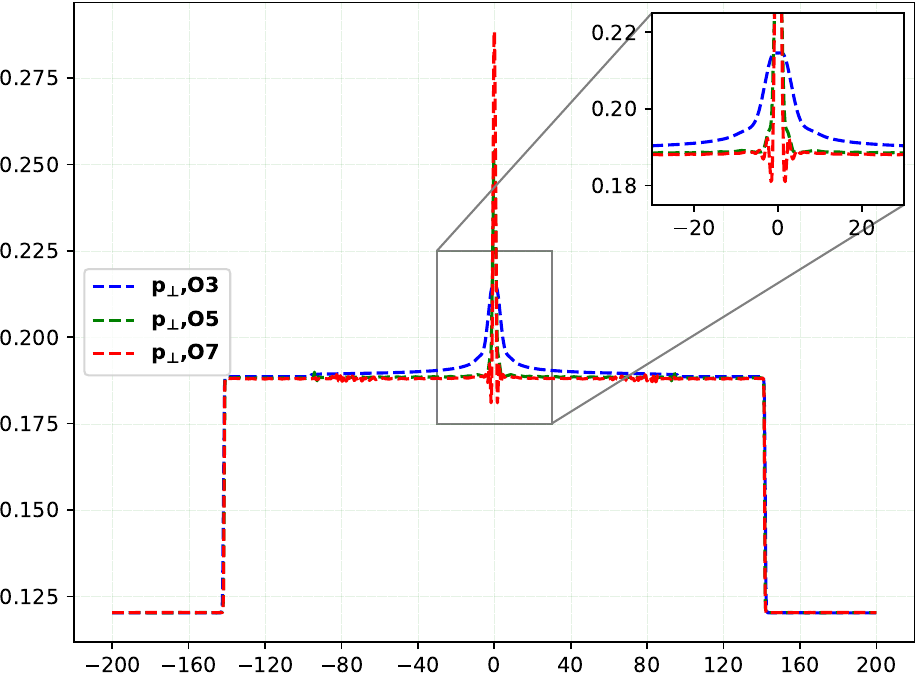}}
	\subfigure[$B_y~\&~B_z$ ]{\includegraphics[width=2.0in, height=1.5in]{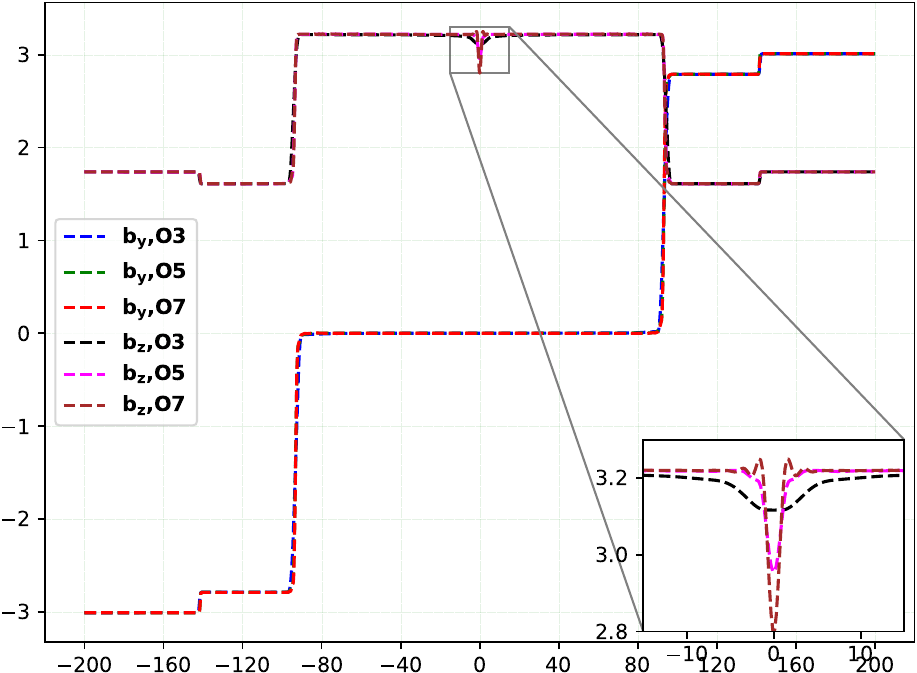}}
	\caption{\textbf{\nameref{test:reconnectionlayer}:} Plots of density, velocity in $x,~y$ and $z$ direction, parallel and perpendicular pressure components and magnetic field in $y$ and $z$ direction for 3rd, 5th and 7th order numerical schemes using HLL Riemann solver and 2000 cells at final time t = 3500.}
	\label{fig:reconnection_layer}
\end{center}
\end{figure}
\begin{figure}[!htbp]
\begin{center}
	\subfigure[$\rho$]{\includegraphics[width=2.0in, height=1.5in]{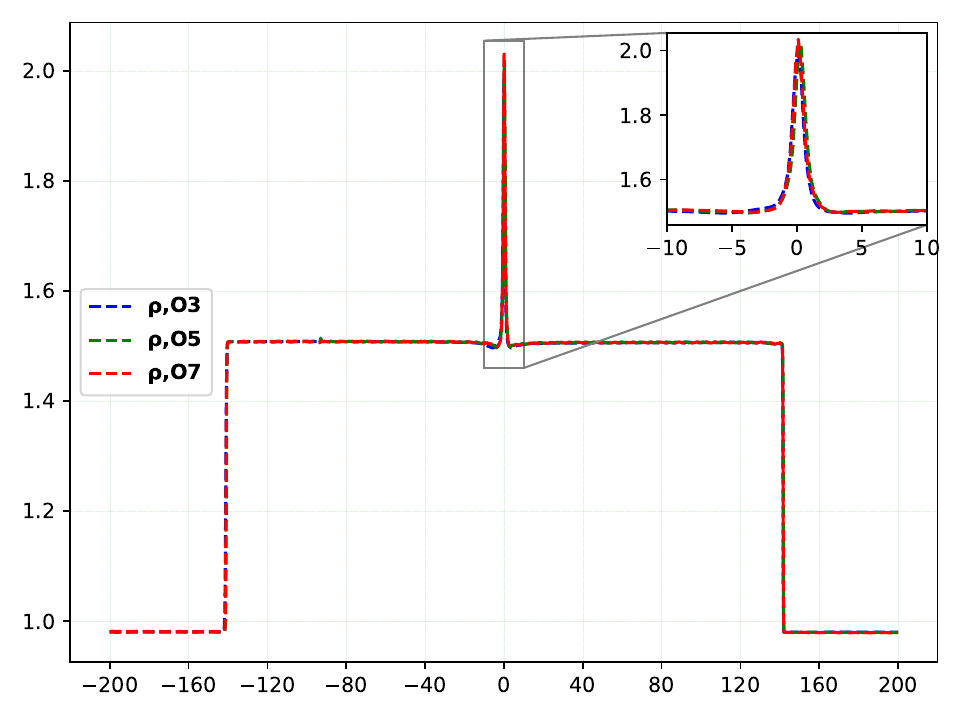}}
	\subfigure[$u_x~\&~u_y~\&~u_z$]{\includegraphics[width=2.0in, height=1.5in]{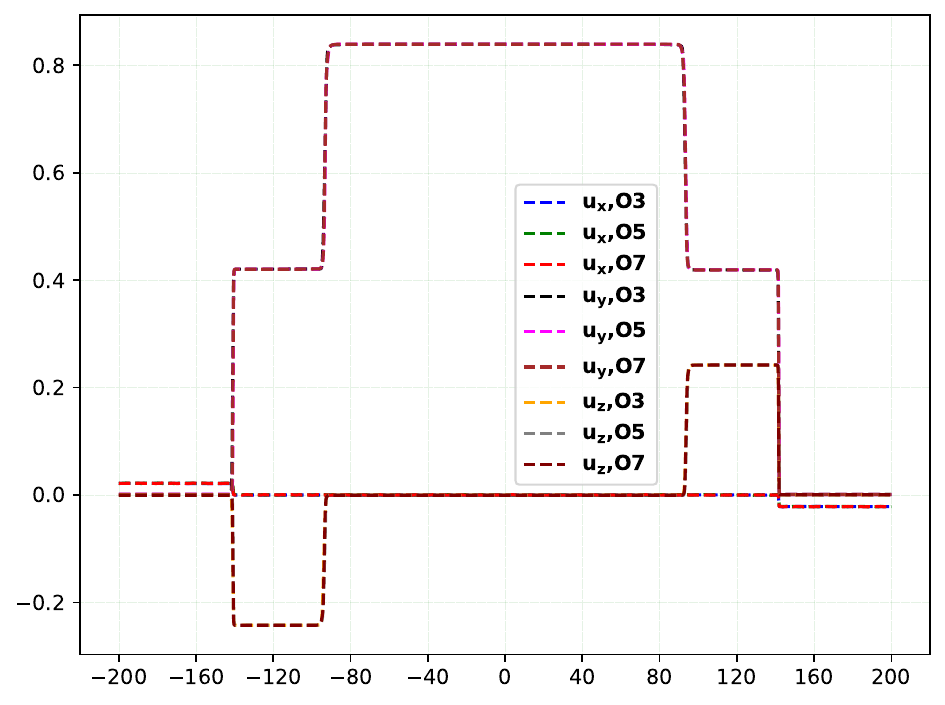}}
	\subfigure[$\pll$]{\includegraphics[width=2.0in, height=1.5in]{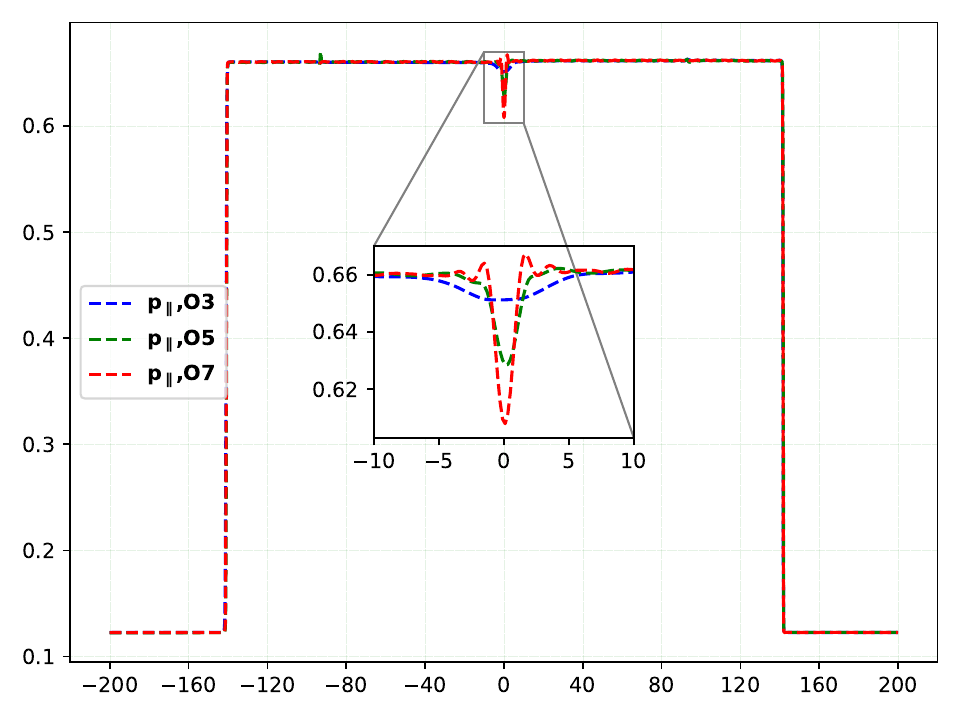}}
	\subfigure[$\per$ ]{\includegraphics[width=2.0in, height=1.5in]{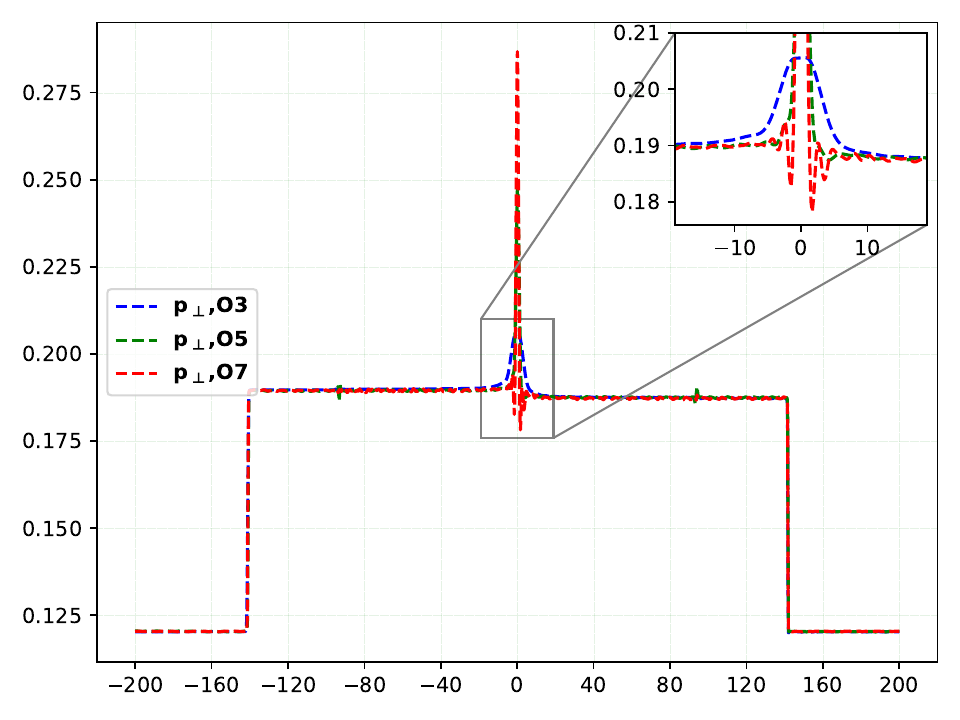}}
	\subfigure[$B_y~\&~B_z$ ]{\includegraphics[width=2.0in, height=1.5in]{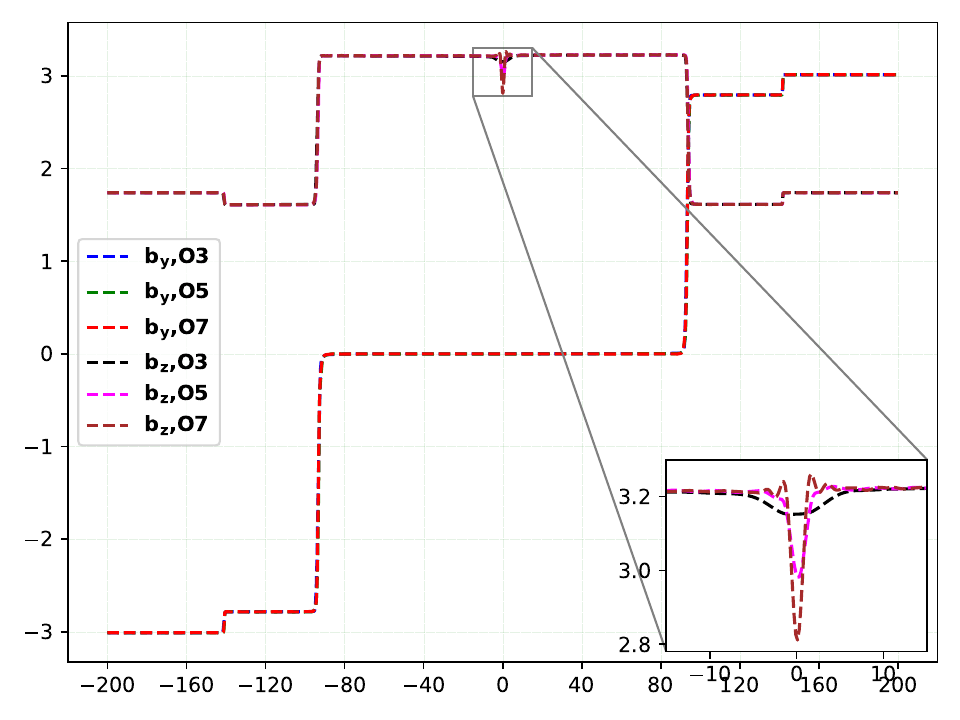}}
	\caption{\textbf{\nameref{test:reconnectionlayer}:} Plots of density, velocity in $x,~y$ and $z$ direction, parallel and perpendicular pressure components and magnetic field in $y$ and $z$ direction for 3rd, 5th and 7th order numerical schemes using HLLI Riemann solver and 2000 cells at final time t = 3500.}
	\label{fig:reconnection_layer_1}
\end{center}
\end{figure}
\rev{The numerical results using the HLL Riemann solver are presented in Fig.$\eqref{fig:reconnection_layer}$. The results for the HLLI Riemann solver are presented in Fig.$\eqref{fig:reconnection_layer_1}$}. We note that all the waves are resolved accurately for all the schemes using both solvers. We do observe some small-scale oscillations in the 7th-order scheme, but the 3rd and 5th-order schemes do not have any oscillations. We also note that for density, the HLLI solver is more accurate than the HLL solver. Furthermore, the results of the proposed schemes compare favorably when compared with those presented in ~\cite{Hirabayashi2016new}. 

\subsection{Riemann problem 1: Brio and Wu Shock tube problem}
\label{test:rp1:brio}
In this test case, we consider a CGL generalization of the Brio-Wu shock tube problem (\cite{Brio1988upwind}) for MHD equations. This test case is also considered in \cite{Hirabayashi2016new,singh2024entropy}. The computational domain is taken to be $[-1,1]$ with outflow boundary conditions. The initial discontinuity is at $x=0$, separating the left and right states, which are given by,
\[
(\rho, u_{x}, u_{y}, u_{z}, \pll, \per, B_{y}, B_{z}) 
= \begin{cases}
(1, 0, 0, 0, 1, 1, \sqrt{4\pi}, 0), & \textrm{if } x\leq 0\\
(0.125, 0, 0, 0, 0.1, 0.1, -\sqrt{4\pi}, 0), & \textrm{otherwise}
\end{cases}
\]

We also set $B_{x}=0.75\sqrt{4\pi}$. In addition to the CGL solution, to capture the isotropic limit, we also use the source terms and compare the results with the MHD reference solution. The simulations are performed using $800$ cells, and the final simulation time is $t=0.2$.

\begin{figure}[!htbp]
\begin{center}
	\subfigure[$\rho$ (without source term)]{\includegraphics[width=2.0in, height=1.5in]{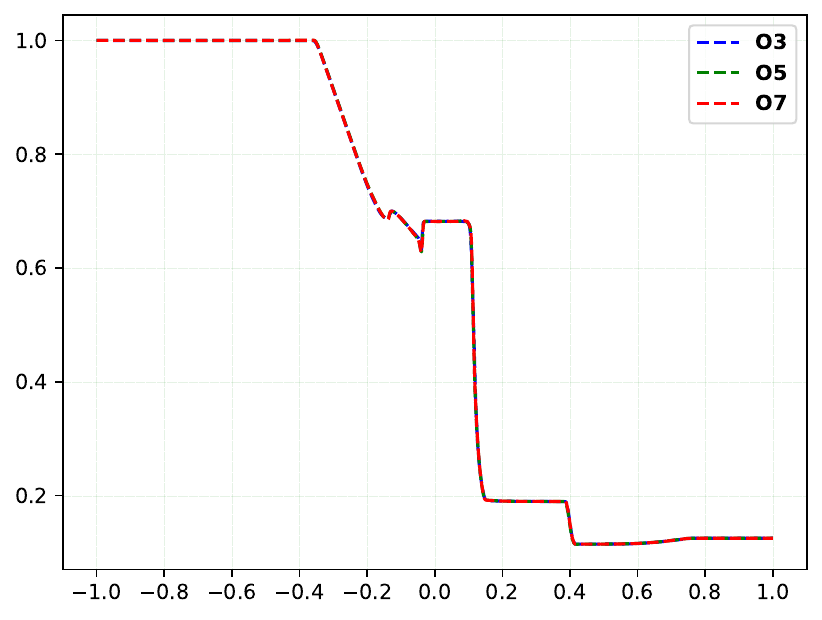}}
    \subfigure[$\rho$ (with source term)]{\includegraphics[width=2.0in, height=1.5in]{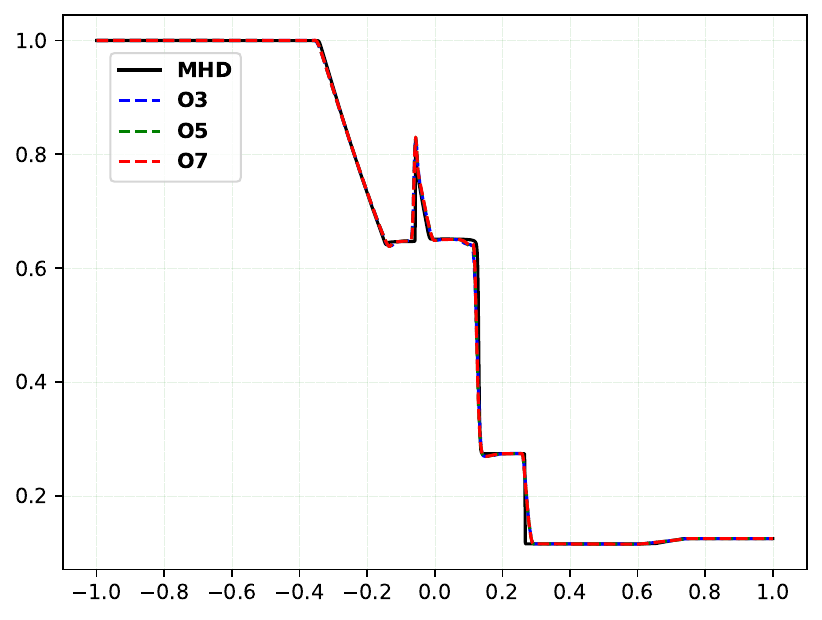}}\\
	\subfigure[$u_x$ (without source term)]{\includegraphics[width=2.0in, height=1.5in]{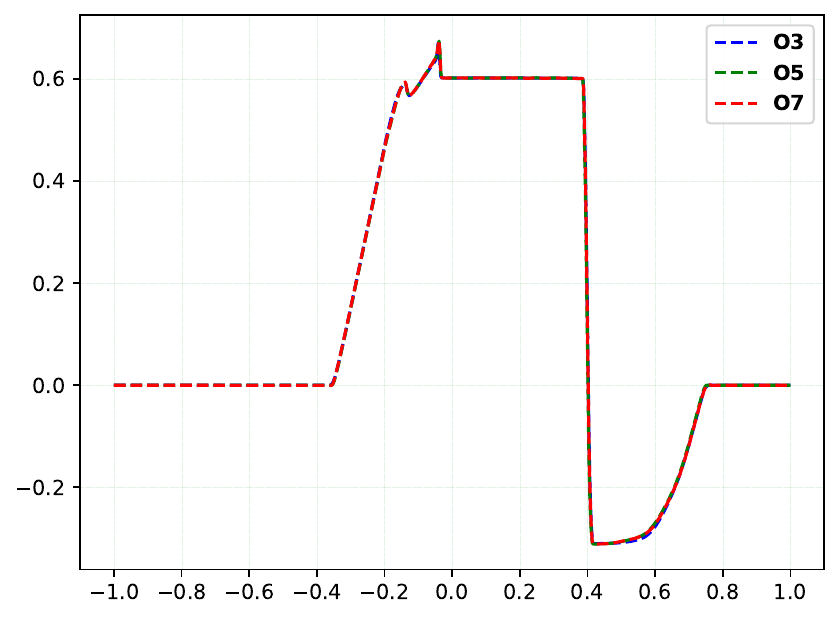}}
    \subfigure[$u_x$ (with source term)]{\includegraphics[width=2.0in, height=1.5in]{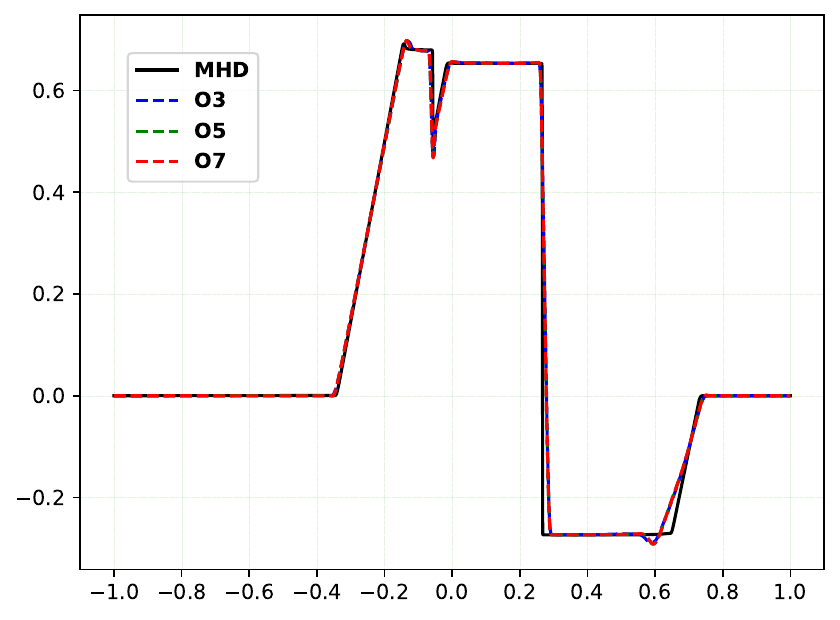}}\\
	\subfigure[$\pll$ (without source term)]{\includegraphics[width=2.0in, height=1.5in]{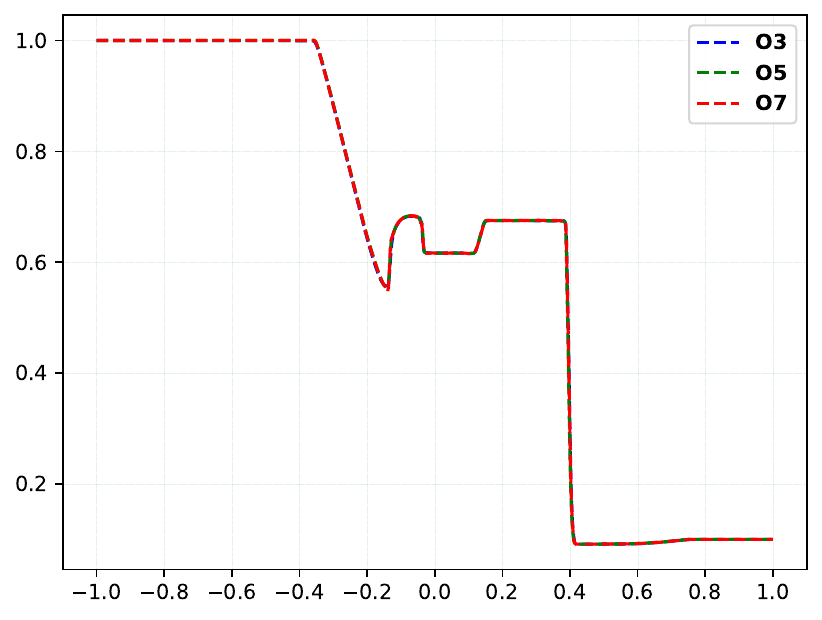}}
    \subfigure[$\pll$ (with source term)]{\includegraphics[width=2.0in, height=1.5in]{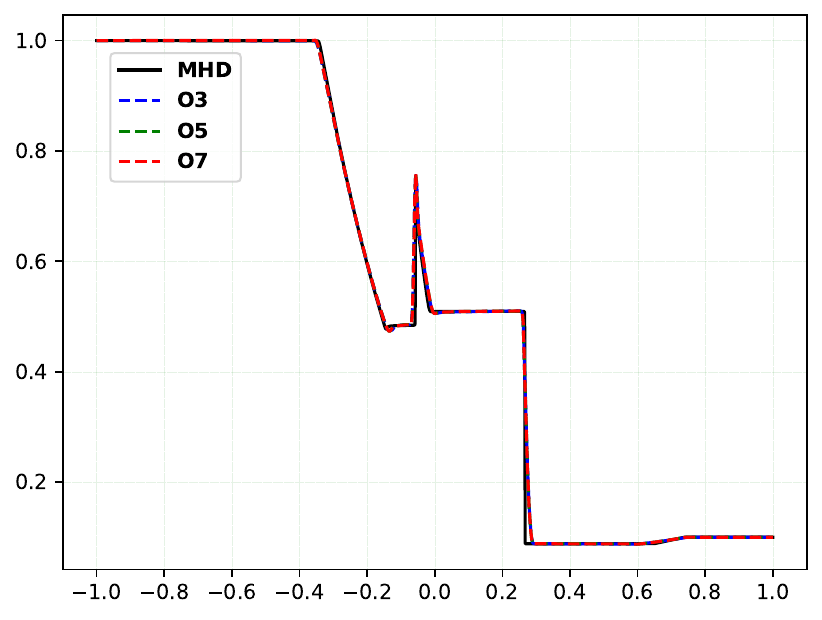}}\\
	\subfigure[$\per$ (without source term)]{\includegraphics[width=2.0in, height=1.5in]{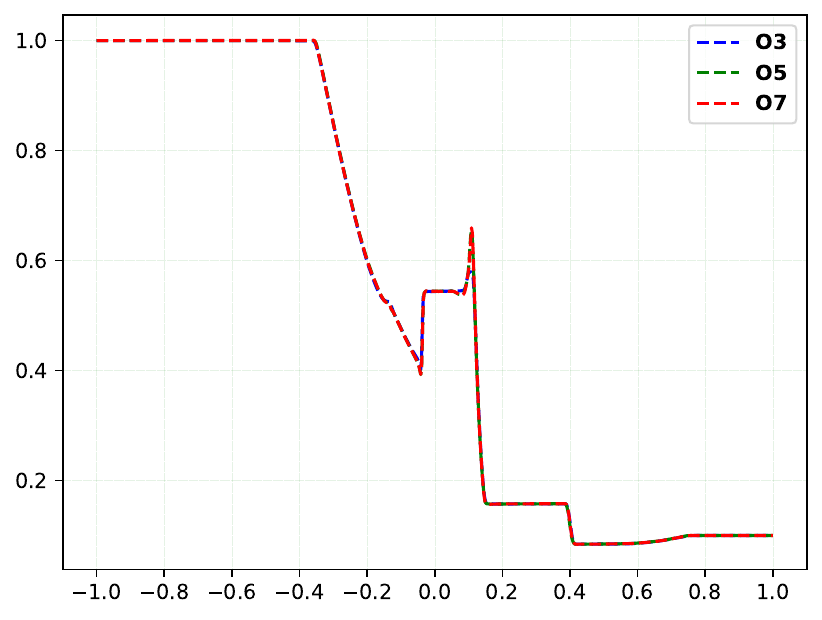}}
    \subfigure[$\per$ (with source term)]{\includegraphics[width=2.0in, height=1.5in]{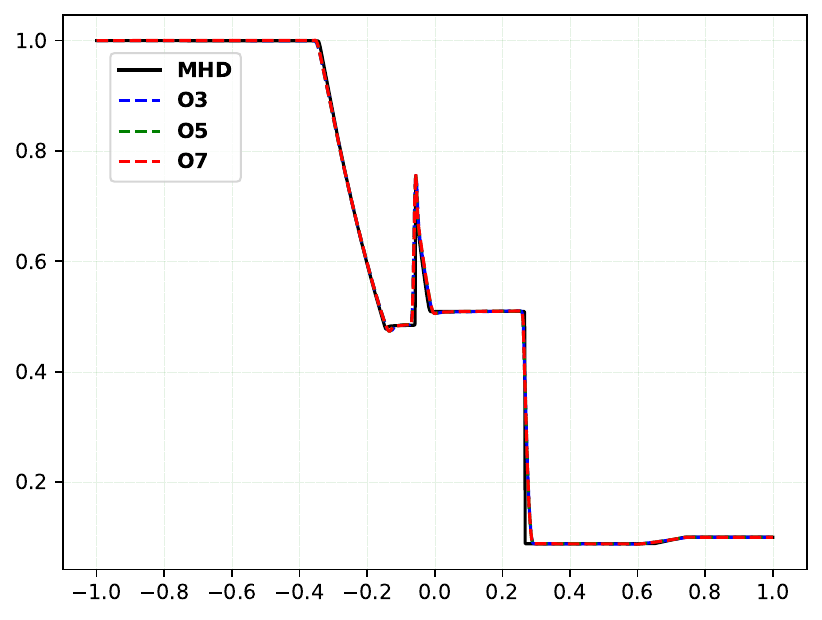}}
	\caption{\textbf{\nameref{test:rp1:brio}:} Plots of density, velocity in $x$-direction and parallel and perpendicular pressure components for 3rd, 5th, and 7th order numerical schemes without and with source term using the HLL Riemann solver and 800 cells at final time t = 0.2.}
	\label{fig:3}
\end{center}
\end{figure}
\begin{figure}[!htbp]
\begin{center}
	\subfigure[$\rho$ (without source term)]{\includegraphics[width=2.0in, height=1.5in]{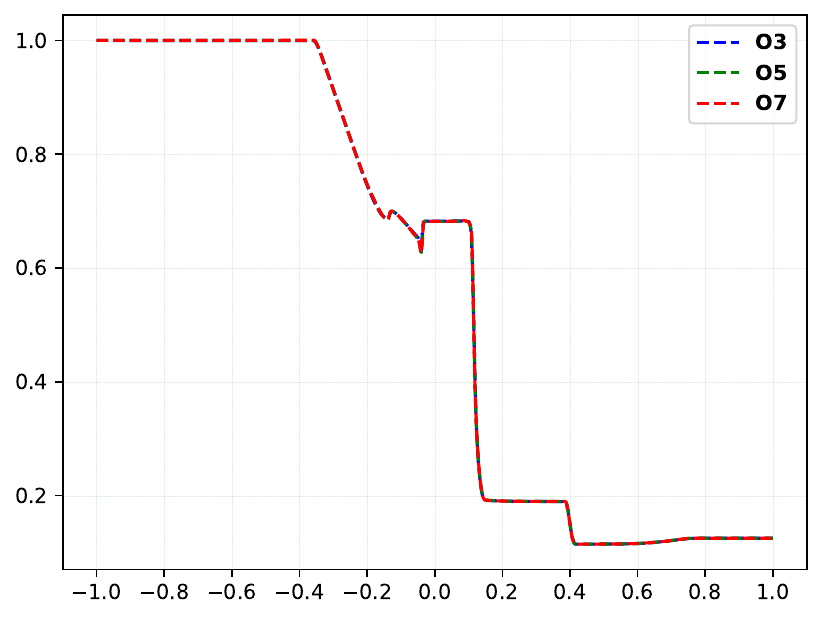}}
    \subfigure[$\rho$ (with source term)]{\includegraphics[width=2.0in, height=1.5in]{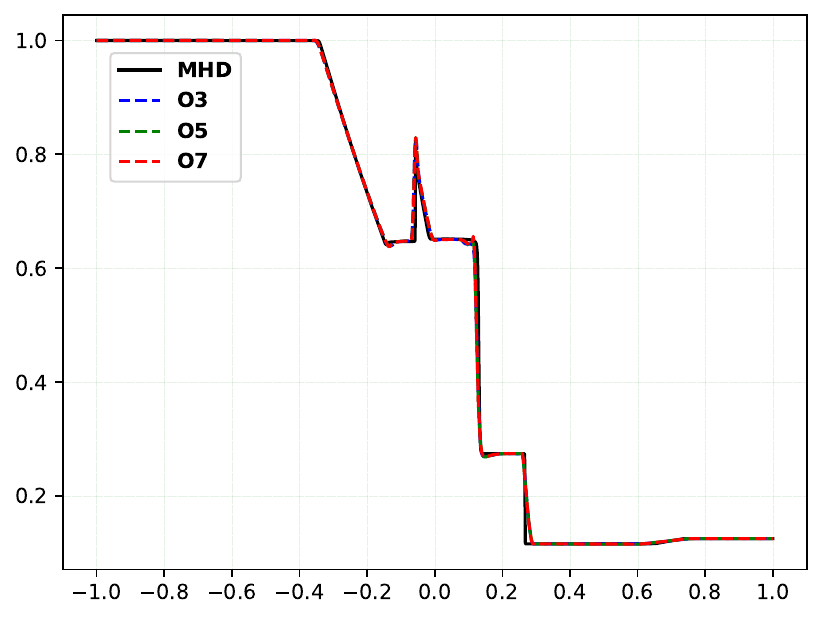}}\\
	\subfigure[$u_x$ (without source term)]{\includegraphics[width=2.0in, height=1.5in]{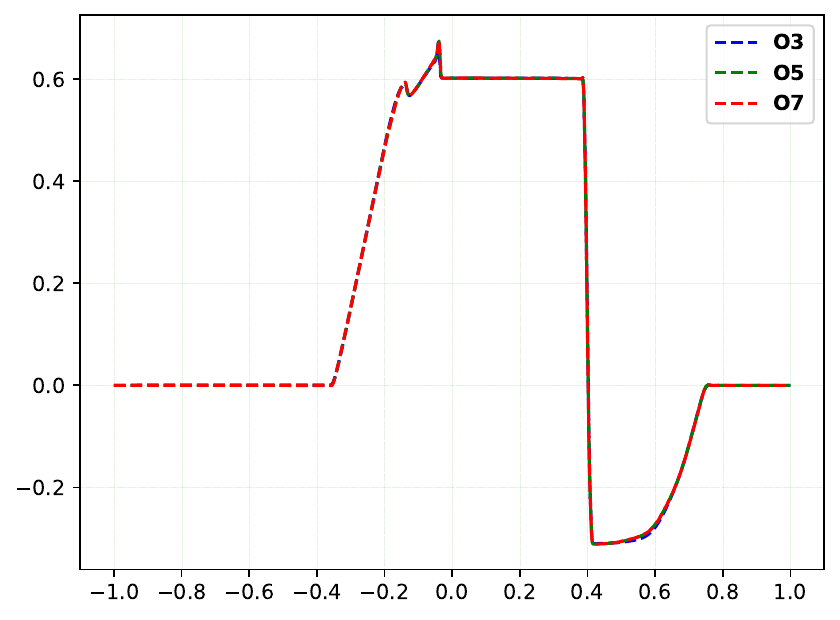}}
    \subfigure[$u_x$ (with source term)]{\includegraphics[width=2.0in, height=1.5in]{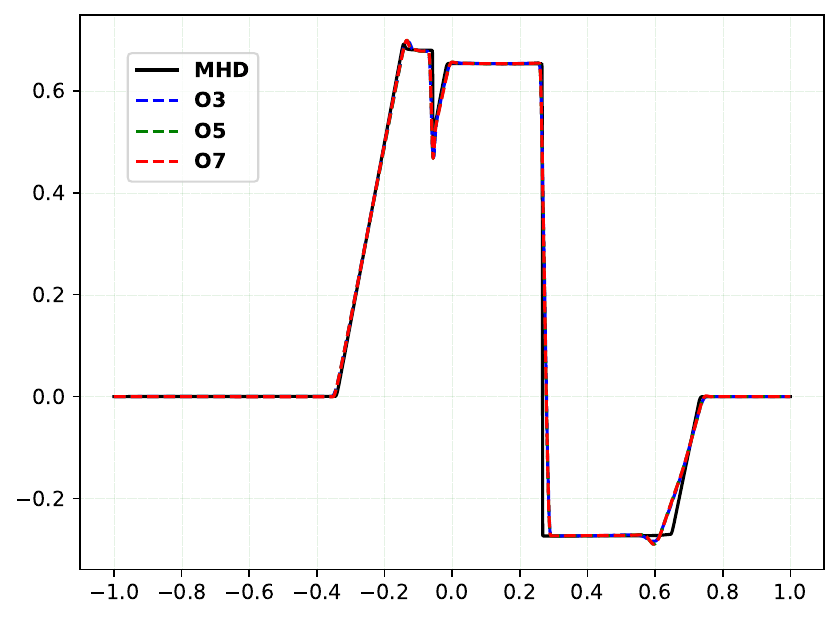}}\\
	\subfigure[$\pll$ (without source term)]{\includegraphics[width=2.0in, height=1.5in]{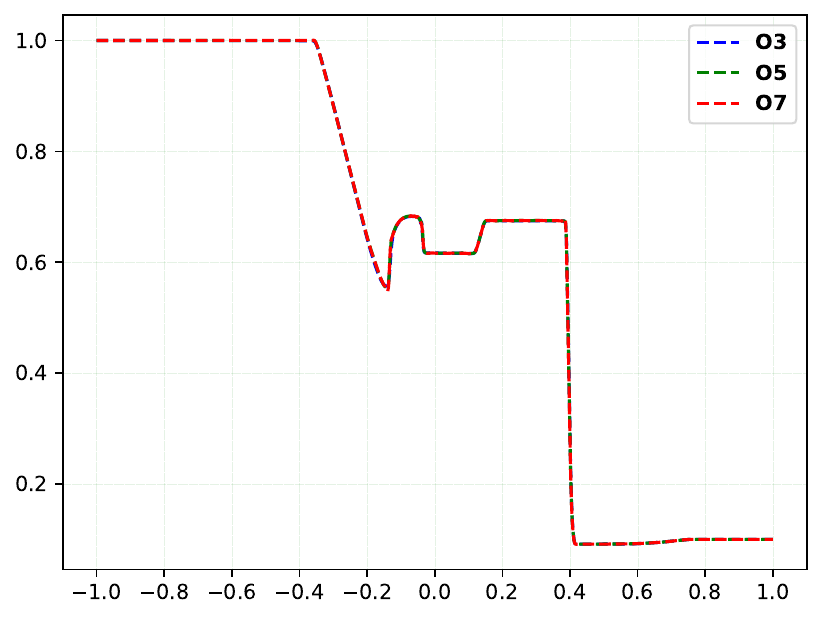}}
    \subfigure[$\pll$ (with source term)]{\includegraphics[width=2.0in, height=1.5in]{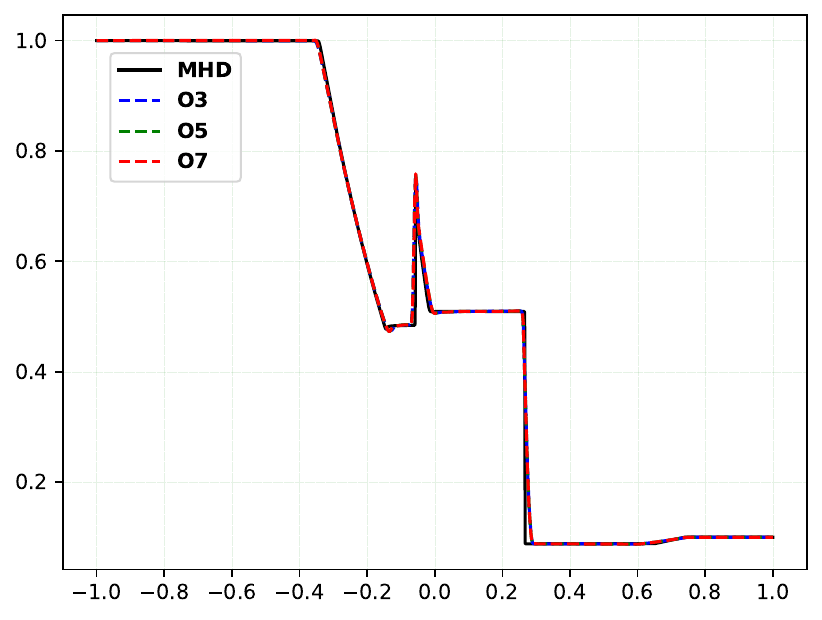}}\\
	\subfigure[$\per$ (without source term)]{\includegraphics[width=2.0in, height=1.5in]{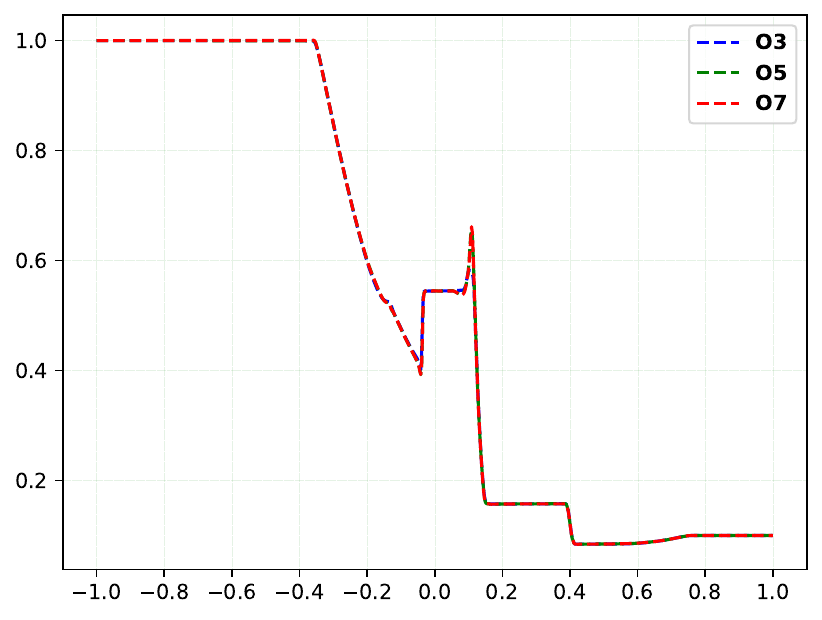}}
    \subfigure[$\per$ (with source term)]{\includegraphics[width=2.0in, height=1.5in]{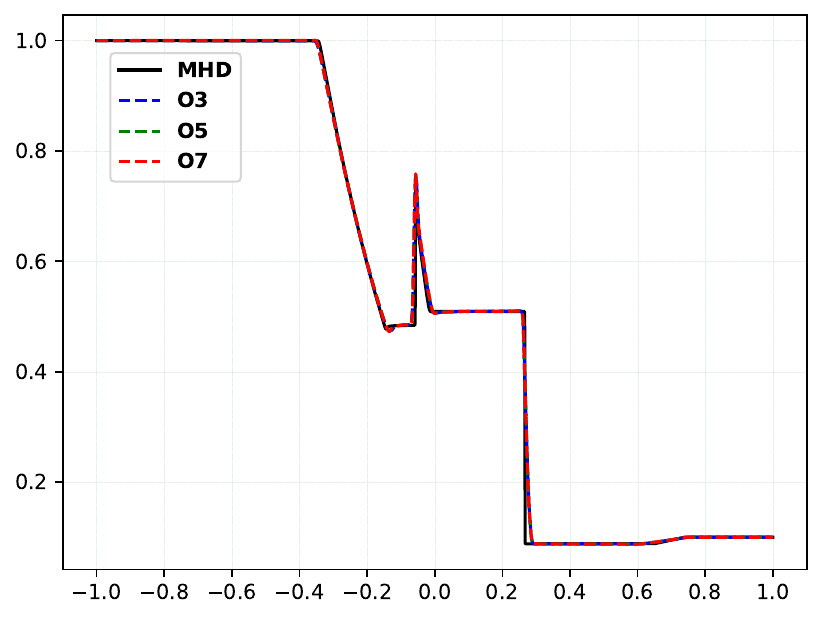}}
	\caption{\textbf{\nameref{test:rp1:brio}:} Plots of density, velocity in $x$-direction and parallel and perpendicular pressure components for 3rd, 5th, and 7th order numerical schemes without and with source term using the HLLI Riemann solver and 800 cells at final time t = 0.2.}
	\label{fig:3.1}
\end{center}
\end{figure}
\rev{The numerical results using the HLL and HLLI Riemann solvers are presented in Fig.$\eqref{fig:3}$ and Fig.$\eqref{fig:3.1}$}, respectively. We have plotted density, velocity in $x$-directions, and pressure components. We observe that without the source terms, we have the CGL solution (anisotropic case) with additional waves, and the solution structure is entirely different from that of the isotropic MHD solution. We also note that the proposed schemes are able to resolve all the waves without any oscillations for both solvers.

For the isotropic case (with source terms), we have noted that the solutions match the MHD solution, and both the pressure components have the same profile. Furthermore, the proposed schemes are able to resolve all the isotropic waves. We also do not see any significant difference in the results for both solvers. The results are similar to those presented in \cite{singh2024entropy} for the CGL equations and in \cite{Hirabayashi2016new} gyrotropic anisotropic and isotropic cases. 

\subsection{Riemann problem 2: Ryu–Jones test problem}
\label{test:rp2:ryu}
In this test case, we consider the CGL generalization of the Ruy-Jones test problem for the MHD equation proposed in \cite{Ryu1995Numerical}. The CGL version is presented in \cite{singh2024entropy}, which is based on the MHD data given in \cite{Chandrashekar2016Entropy}. The computational domain is $[-0.5,0.5]$ with an outflow boundary condition. The initial conditions are given by,
\[(\rho, u_{x}, u_{y}, u_{z}, \pll, \per, B_{y}, B_{z}) = \begin{cases}
(1.08, 1.2, 0.0, 0.0, 0.95, 0.95, 3.6, 2.0), & \textrm{if } x\leq 0\\
(1, 0, 0, 0, 1, 1, 4, 2), & \textrm{otherwise}
\end{cases}\]
with $B_{x}=2$.
We again consider the equations without (anisotropic case) and with (isotropic cases) source terms. The simulations are performed using $800$ cells, and we compute till the final time of $t=0.2$ with CFL number 0.8. This problem was run on an 800-zone mesh spanning the domain $[-0.5,0.5]$.  

\begin{figure}[!htbp]
\begin{center}
	\subfigure[$\rho$ (without source term)]{\includegraphics[width=2.0in, height=1.5in]{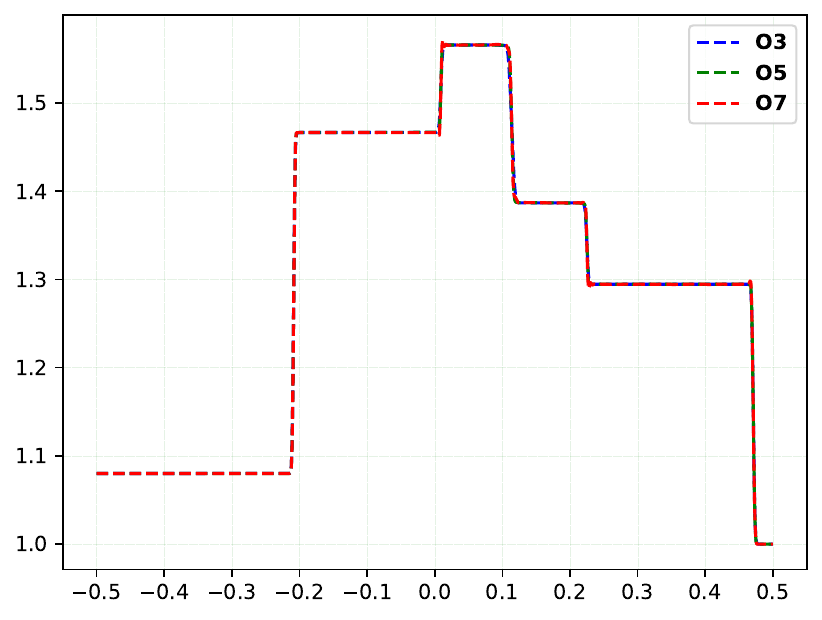}}
    \subfigure[$\rho$ (with source term)]{\includegraphics[width=2.0in, height=1.5in]{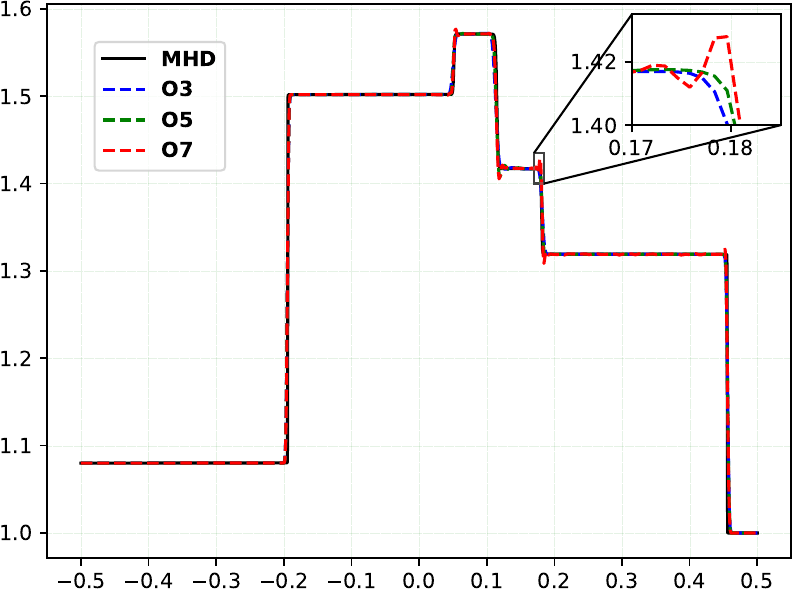}}\\
	\subfigure[$\pll$ (without source term)]{\includegraphics[width=2.0in, height=1.5in]{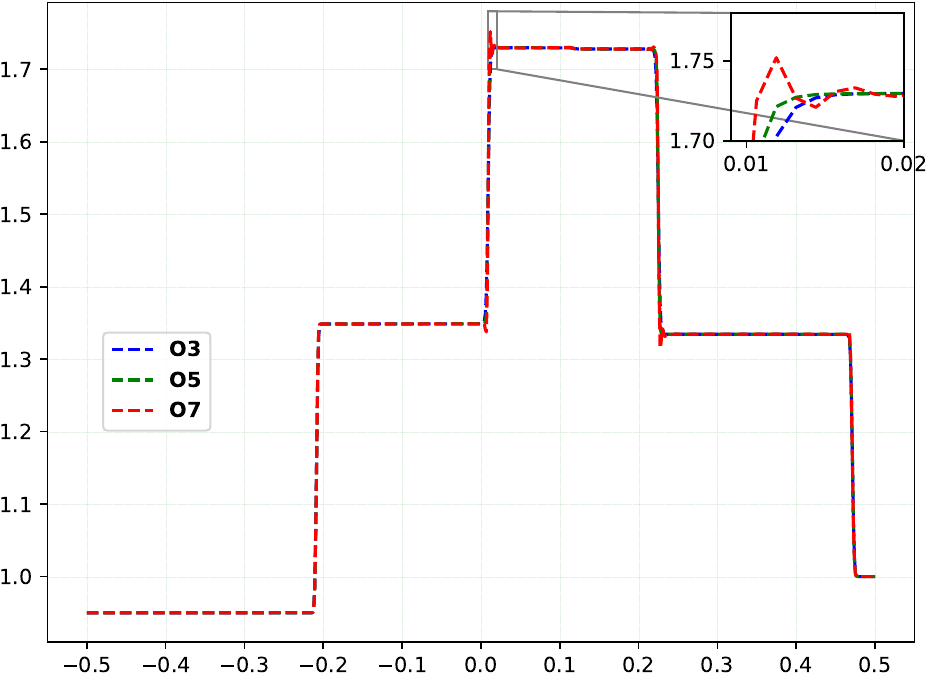}}
    \subfigure[$\pll$ (with source term)]{\includegraphics[width=2.0in, height=1.5in]{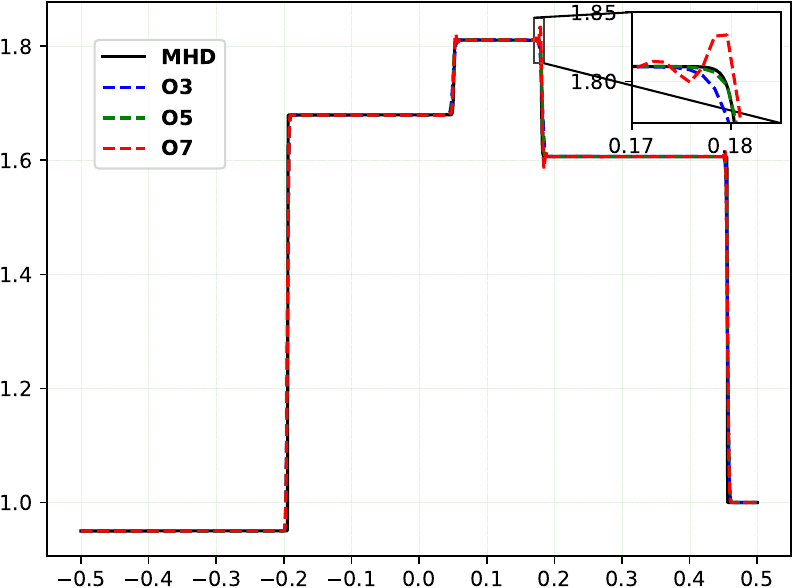}}\\
	\subfigure[$\per$ (without source term)]{\includegraphics[width=2.0in, height=1.5in]{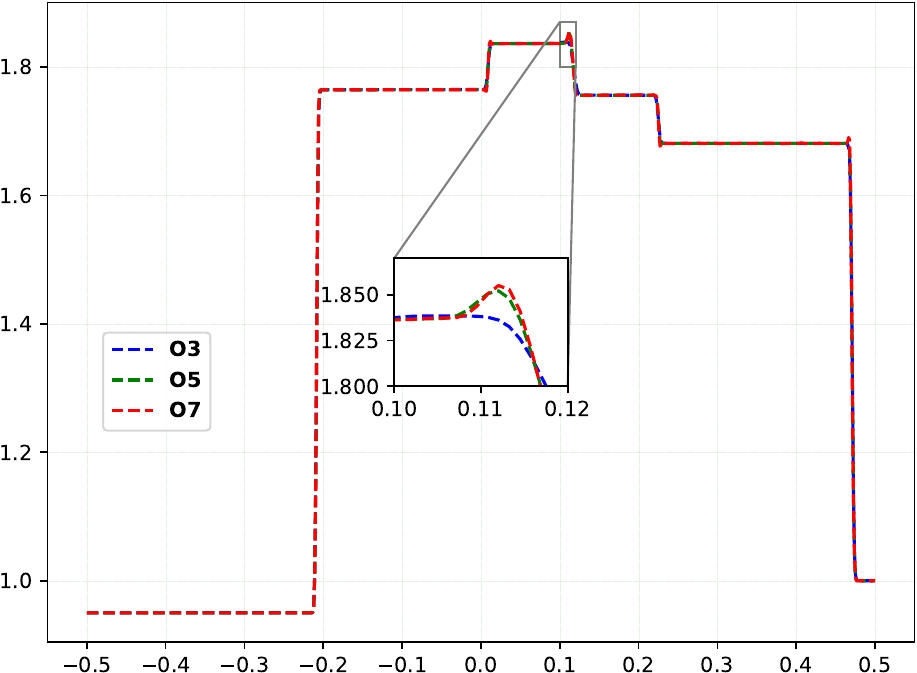}}
    \subfigure[$\per$ (with source term)]{\includegraphics[width=2.0in, height=1.5in]{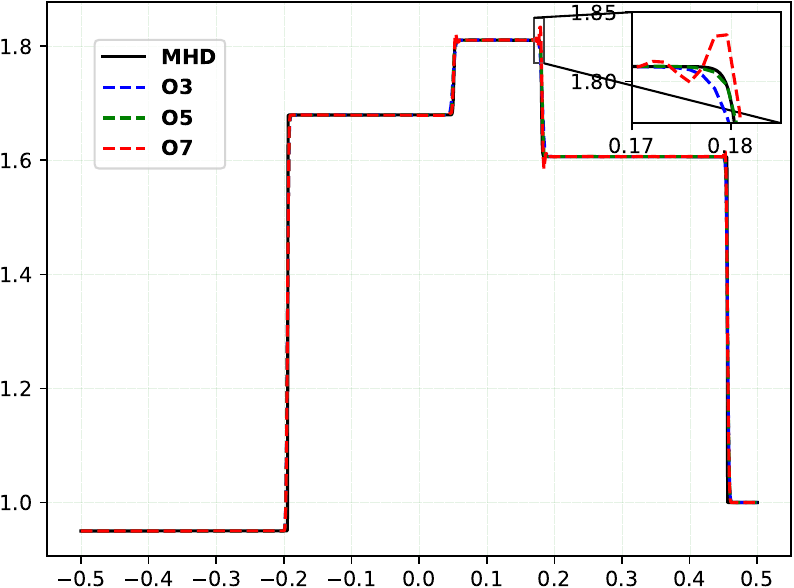}}\\
	\subfigure[$B_y$ (without source term)]{\includegraphics[width=2.0in, height=1.5in]{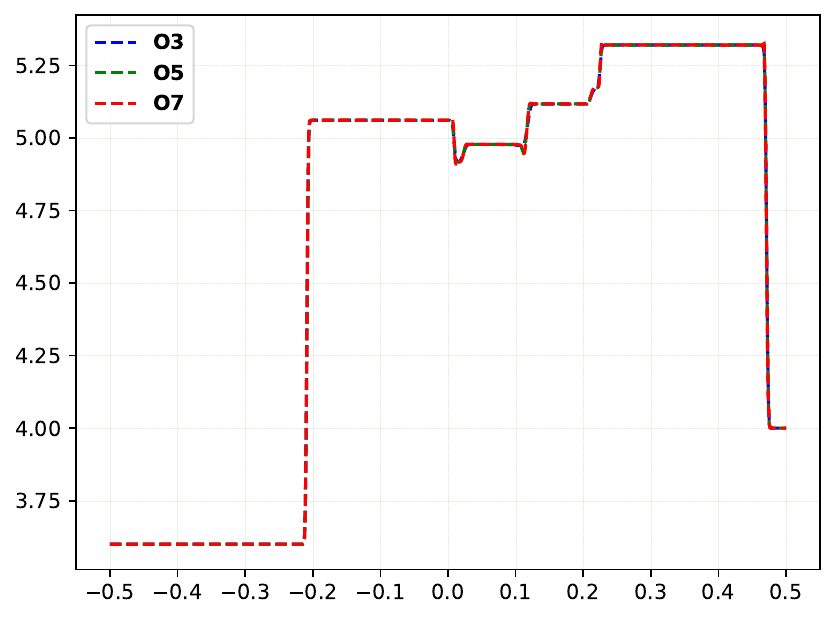}}
    \subfigure[$B_y$ (with source term)]{\includegraphics[width=2.0in, height=1.5in]{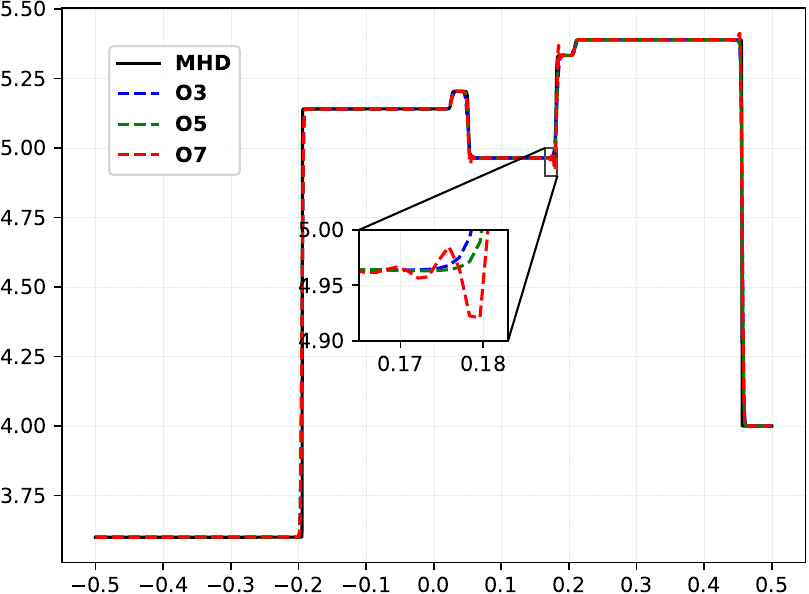}}
	\caption{\textbf{\nameref{test:rp2:ryu}:} Plots of density, parallel and perpendicular pressure components, and magnetic field in $y$ direction for 3rd, 5th, and 7th order numerical schemes without and with source term using the HLL Riemann solver and 800 cells at final time t = 0.2.}
	\label{fig:4}
\end{center}
\end{figure}
\begin{figure}[!htbp]
\begin{center}
	\subfigure[$\rho$ (without source term)]{\includegraphics[width=2.0in, height=1.5in]{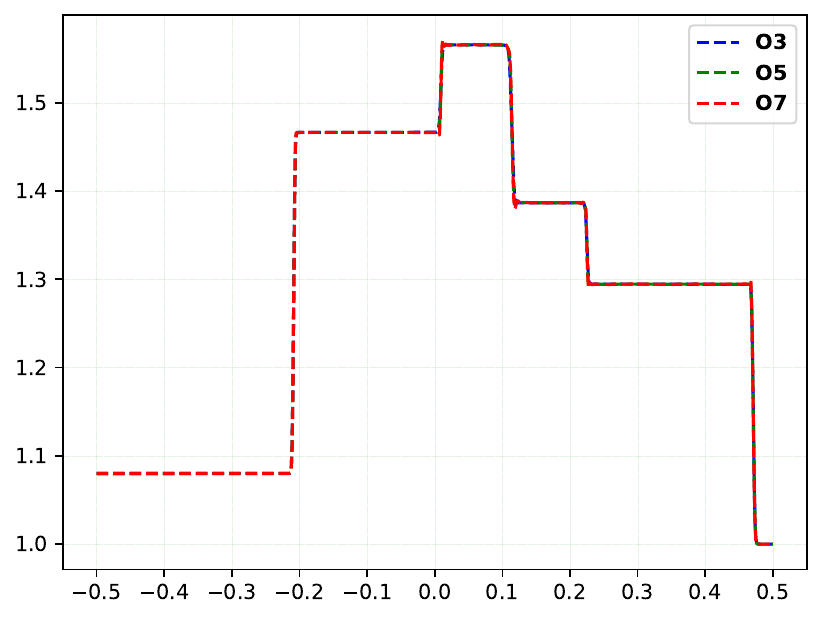}}
    \subfigure[$\rho$ (with source term)]{\includegraphics[width=2.3in, height=1.7in]{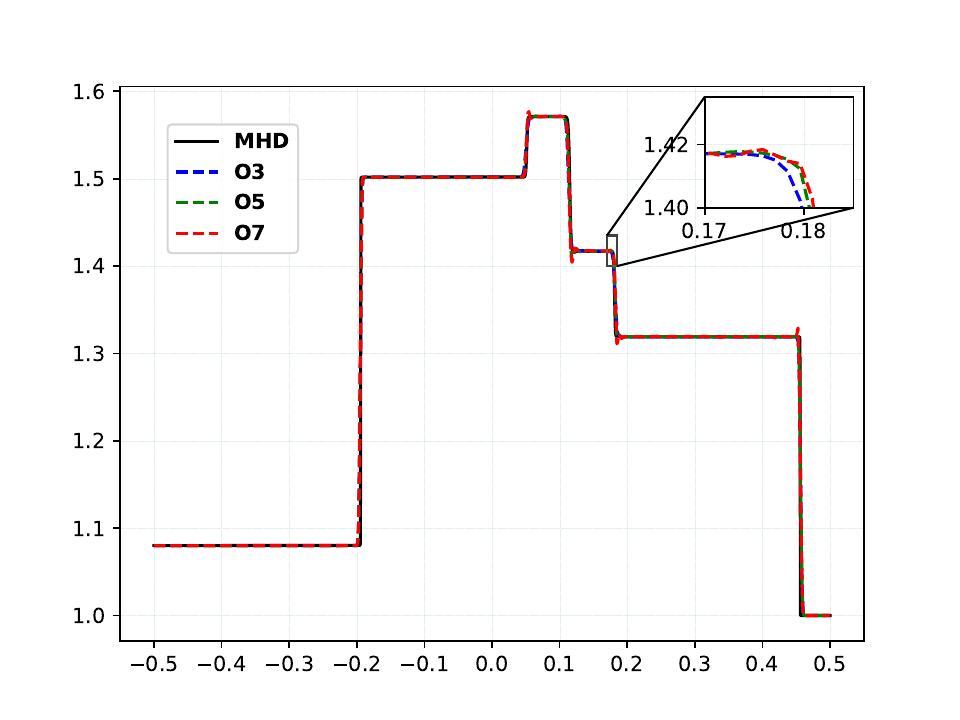}}\\
	\subfigure[$\pll$ (without source term)]{\includegraphics[width=2.0in, height=1.5in]{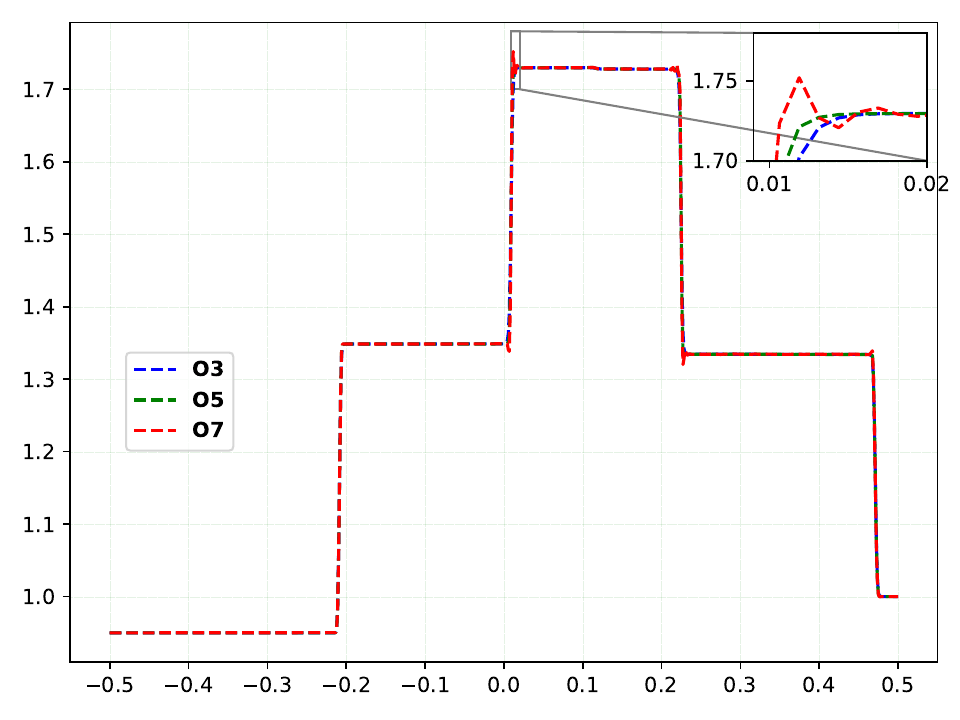}}
    \subfigure[$\pll$ (with source term)]{\includegraphics[width=2.3in, height=1.7in]{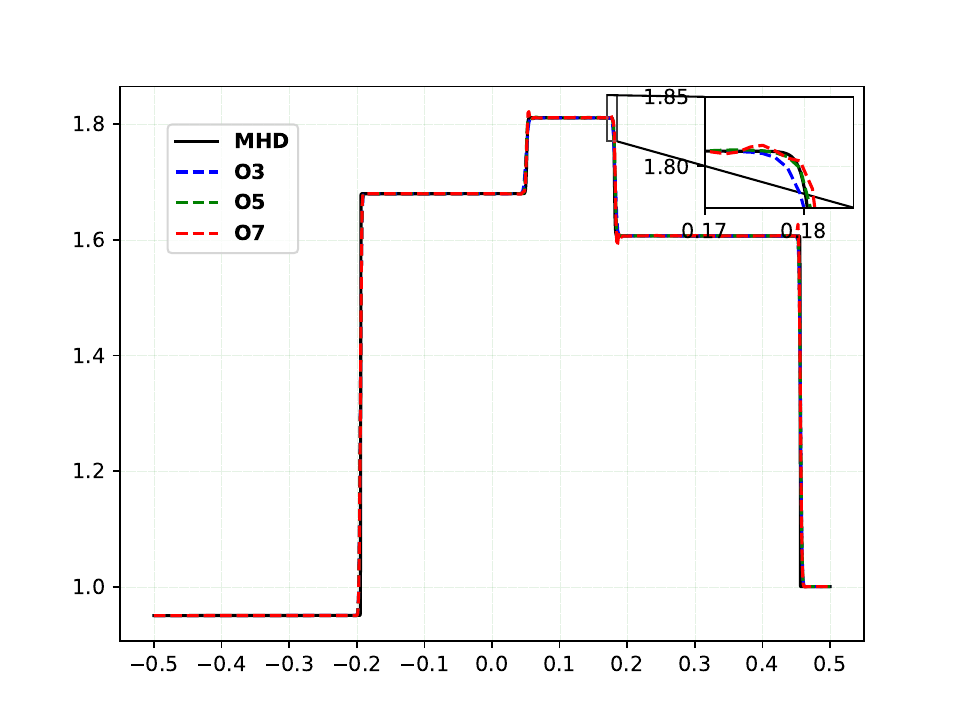}}\\
	\subfigure[$\per$ (without source term)]{\includegraphics[width=2.0in, height=1.5in]{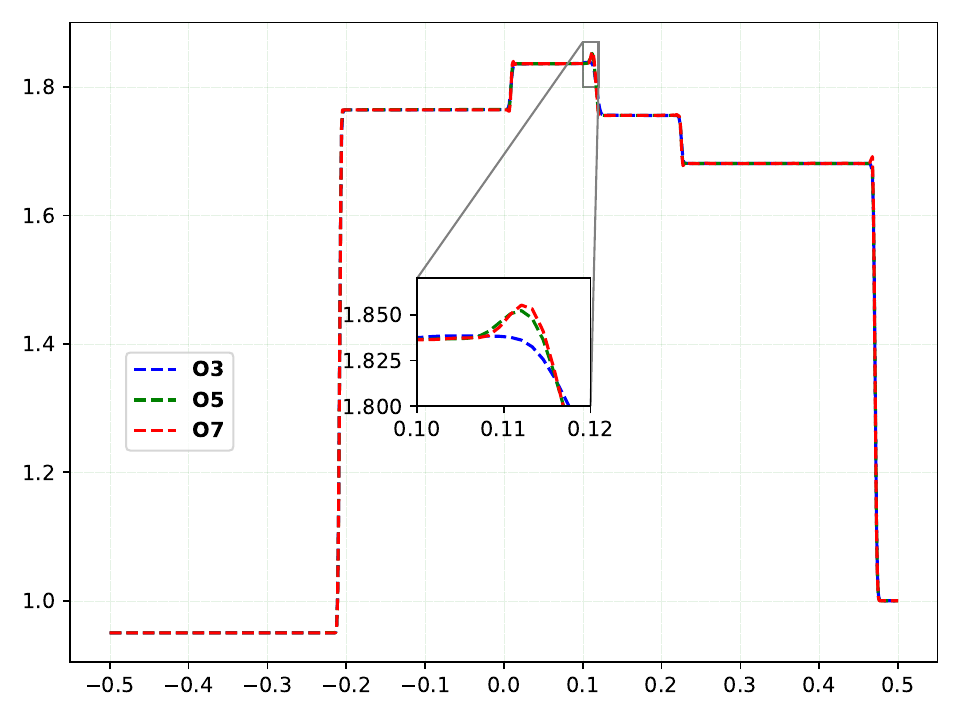}}
    \subfigure[$\per$ (with source term)]{\includegraphics[width=2.3in, height=1.7in]{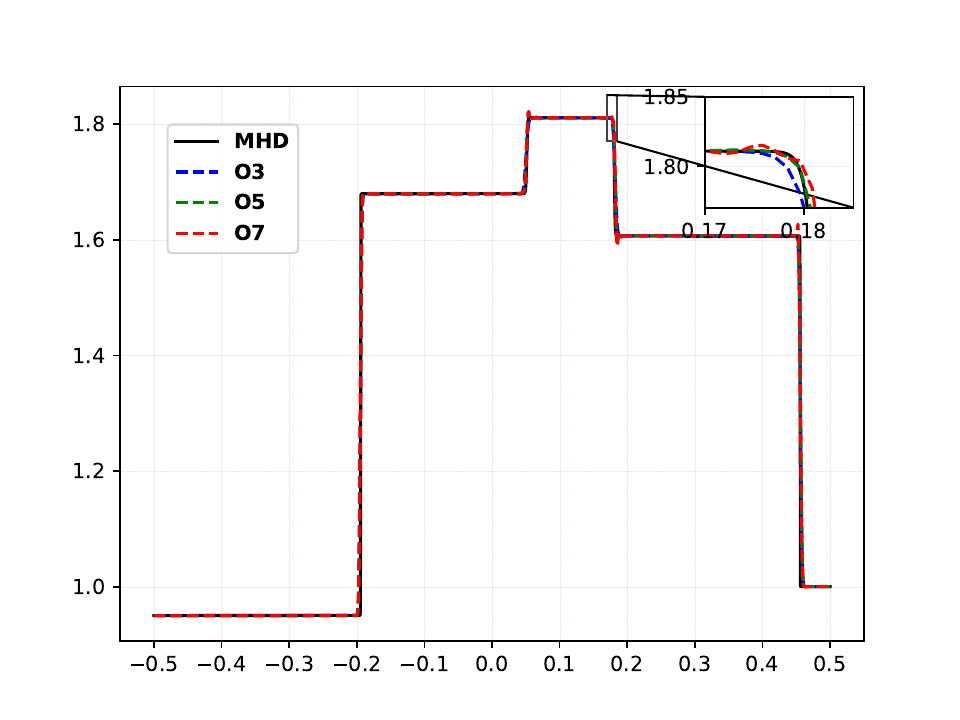}}\\
	\subfigure[$B_y$ (without source term)]{\includegraphics[width=2.0in, height=1.5in]{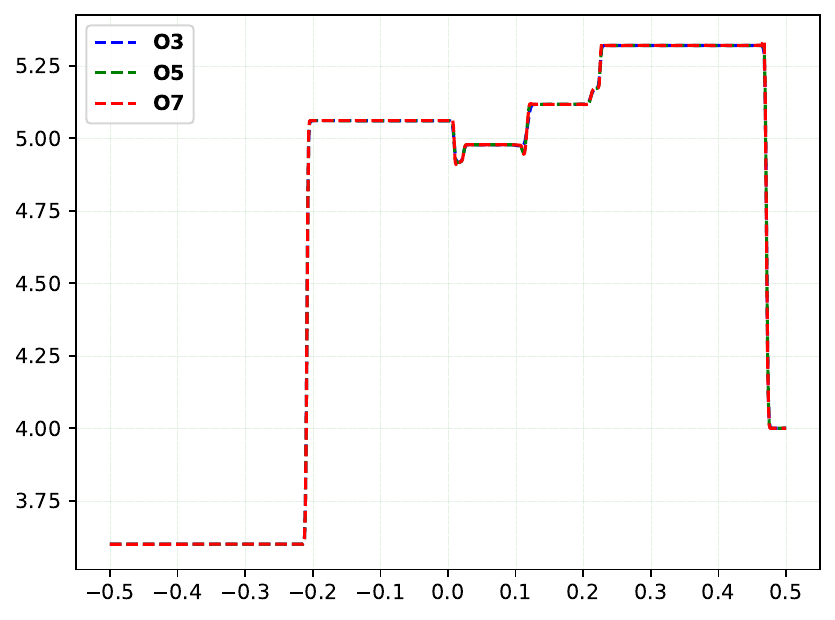}}
    \subfigure[$B_y$ (with source term)]{\includegraphics[width=2.3in, height=1.7in]{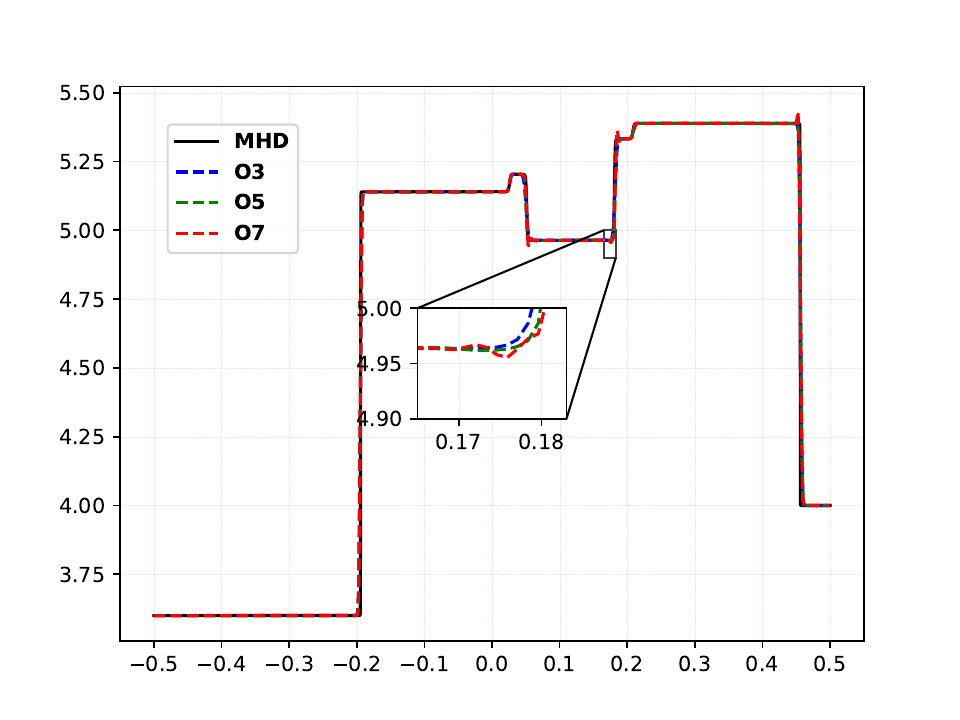}}
	\caption{\textbf{\nameref{test:rp2:ryu}:} Plots of density, parallel and perpendicular pressure components, and magnetic field in $y$ direction for 3rd, 5th, and 7th order numerical schemes without and with source term using the HLLI Riemann solver and 800 cells at final time t = 0.2.}
	\label{fig:4.1}
\end{center}
\end{figure}
\rev{The numerical results using the HLL Riemann solver are presented in Fig.$\eqref{fig:4}$ and using the HLLI Riemann solver are presented in Fig.$\eqref{fig:4.1}$}. Again, we observe that for anisotropic cases (without source terms), the proposed schemes have resolved all the waves. We do observe some small-scale oscillations for the 7th-order scheme near oscillations, but they are stable with respect to further refinement. We do observe that the HLLI solver does decrease the oscillations for the 7th order scheme in the isotropic case (see Fig.$\eqref{fig:4.1}$). Furthermore, the wave structure of the solution matches with the results presented in \cite{singh2024entropy}. 

For the isotropic case (with source terms), we again see that both the pressure components have the same profile, and all the variables match with the MHD reference solution. Again, the 7th-order scheme has some small-scale oscillations, but the 3rd and 5th-order schemes are oscillation-free. Again, the results are similar to those presented in \cite{singh2024entropy}.

\subsection{Riemann Problem 3}
\label{test:rp3}
In this test case, we again generalize an MHD test case from \cite{dumbser2016new}, which is generalized to the CGL test case in \cite{singh2024entropy}. The computational domain is $[-0.5,0.5]$, and we consider the outflow boundary conditions. The initial conditions are given as follows:
\[(\rho, u_{x}, u_{y}, u_{z}, \pll, \per, B_{y}, B_{z}) = \begin{cases}
	(1.7, 0, 0, 0, 1.7, 1.7, 3.544908, 0), & \textrm{if } x\leq 0\\
	(0.2, 0, 0, -1.496891, 0.2, 0.2, 2.785898, 2.192064), & \textrm{otherwise}
\end{cases}\]
with $B_{x}=3.899398$. Again, we consider the CGL model without source terms (anisotropic case) and with source terms (isotropic case). The simulations are performed on $800$ cells till the final time of $t=0.15$. 
\begin{figure}[!htbp]
\begin{center}
	\subfigure[$\rho$ (without source term)]{\includegraphics[width=2.0in, height=1.5in]{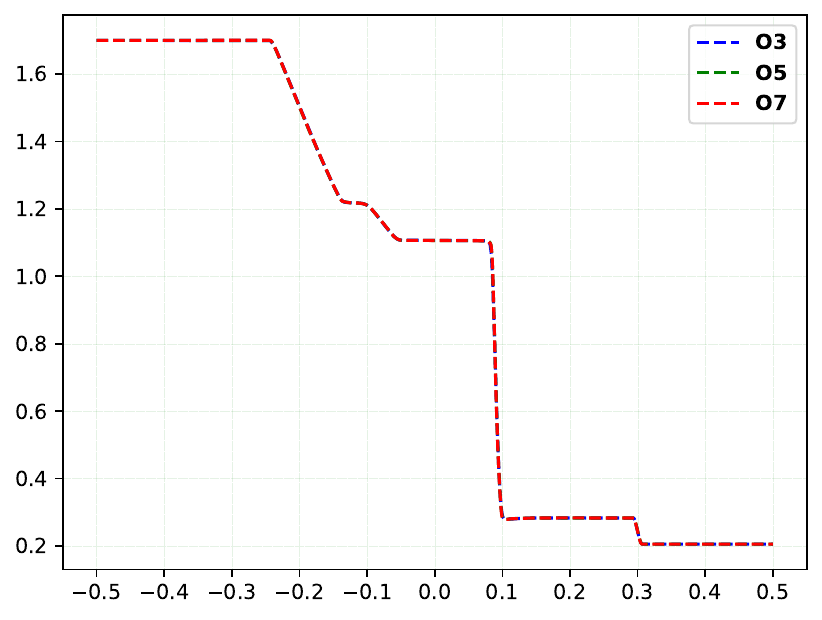}}
    \subfigure[$\rho$ (with source term)]{\includegraphics[width=2.0in, height=1.5in]{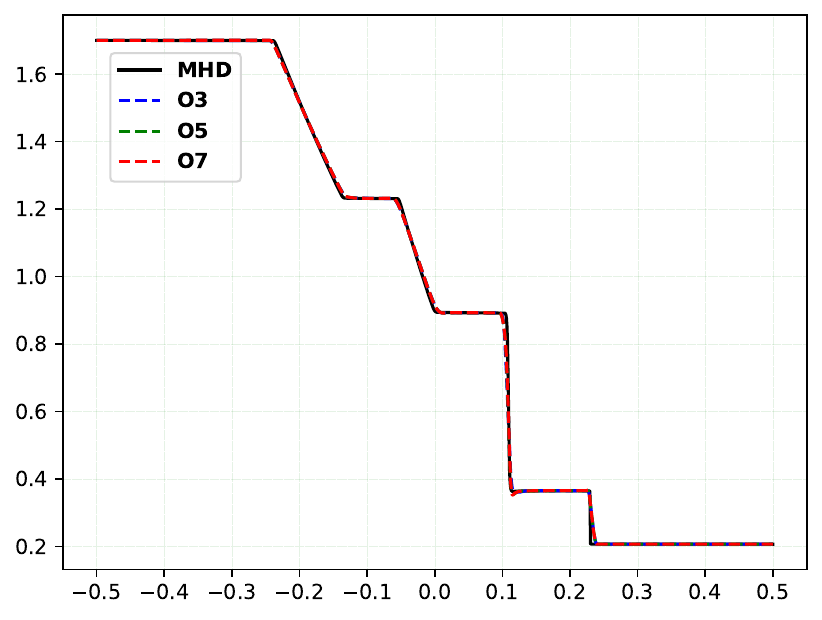}}\\
	\subfigure[$\pll$ (without source term)]{\includegraphics[width=2.0in, height=1.5in]{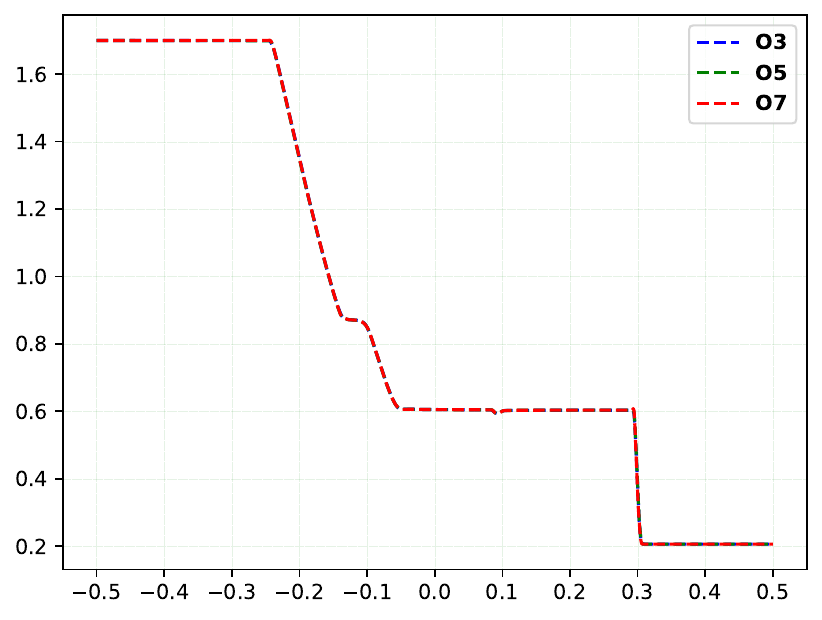}}
    \subfigure[$\pll$ (with source term)]{\includegraphics[width=2.0in, height=1.5in]{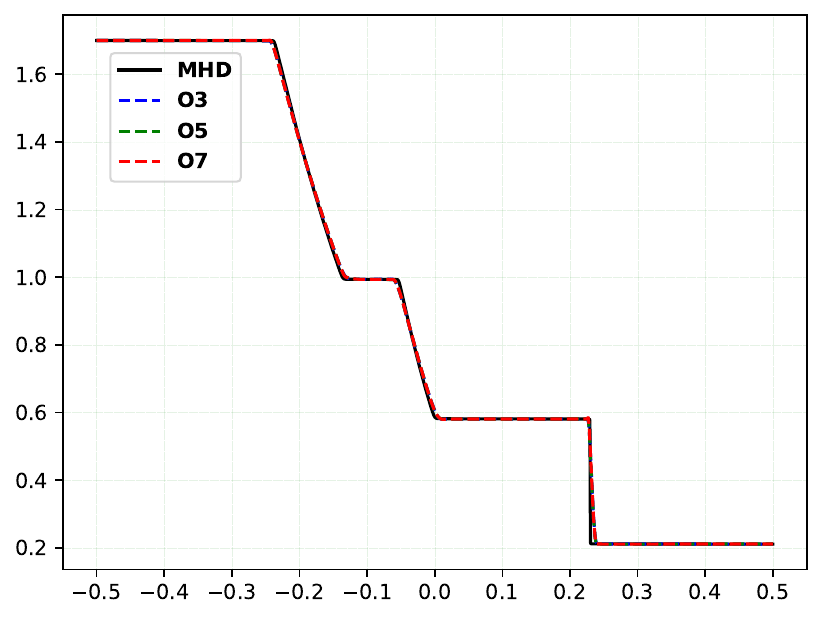}}\\
	\subfigure[$\per$ (without source term)]{\includegraphics[width=2.0in, height=1.5in]{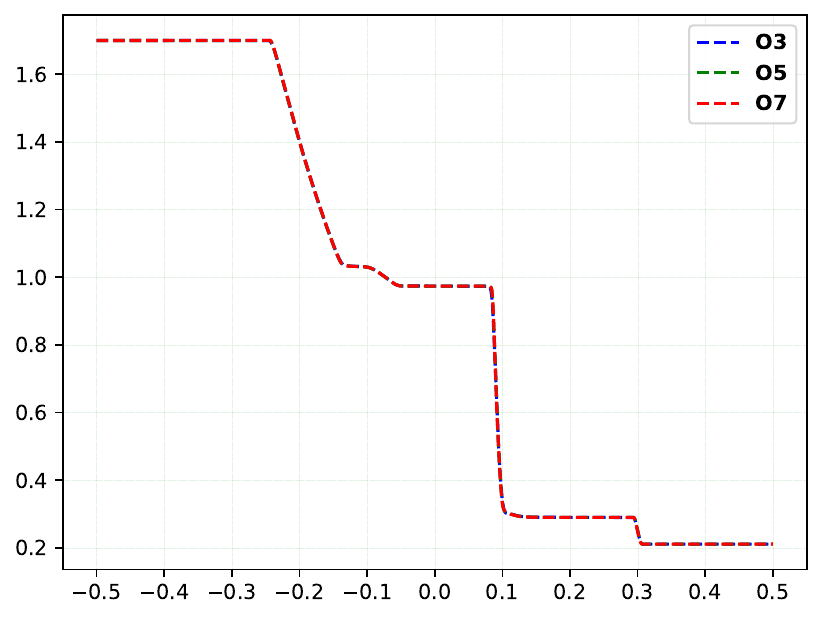}}
    \subfigure[$\per$ (with source term)]{\includegraphics[width=2.0in, height=1.5in]{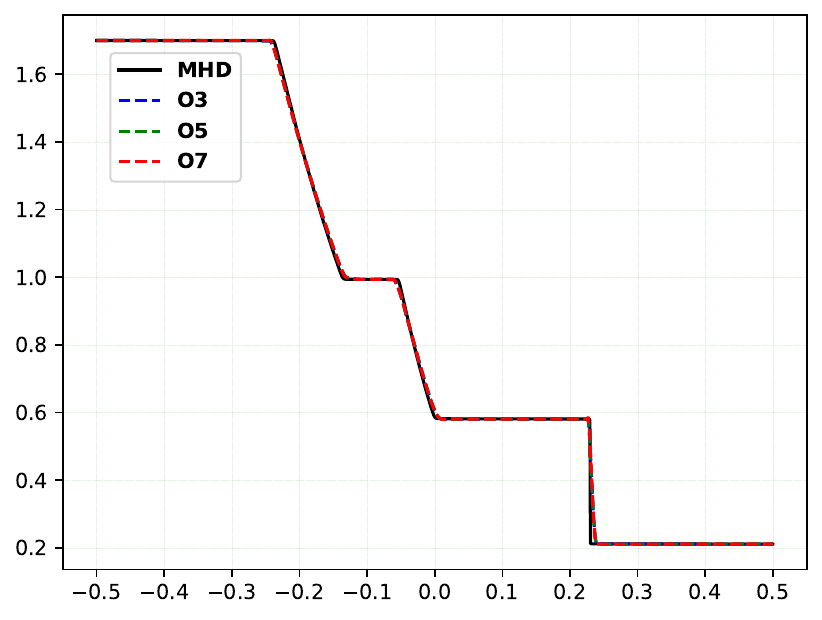}}\\
	\subfigure[$B_y$ (without source term)]{\includegraphics[width=2.0in, height=1.5in]{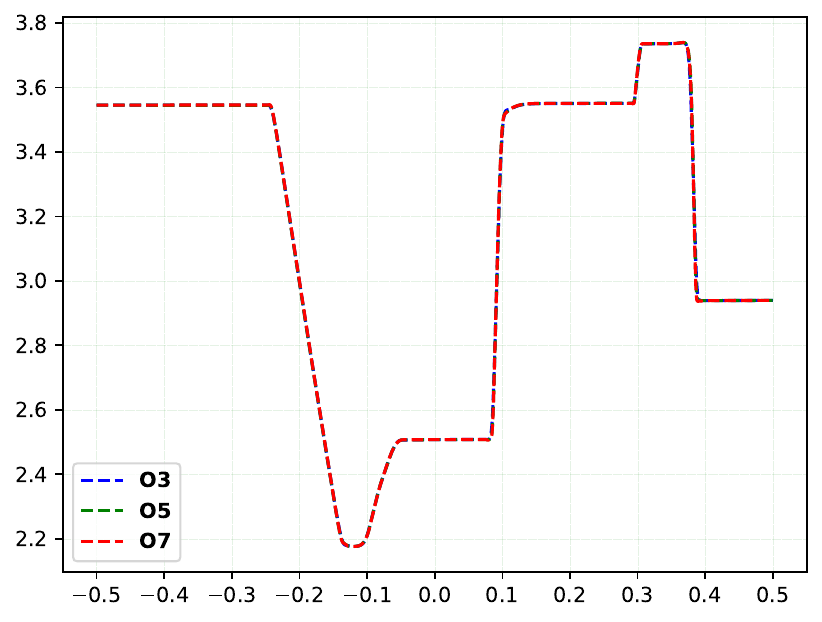}}
        \subfigure[$B_y$ (with source term)]{\includegraphics[width=2.0in, height=1.5in]{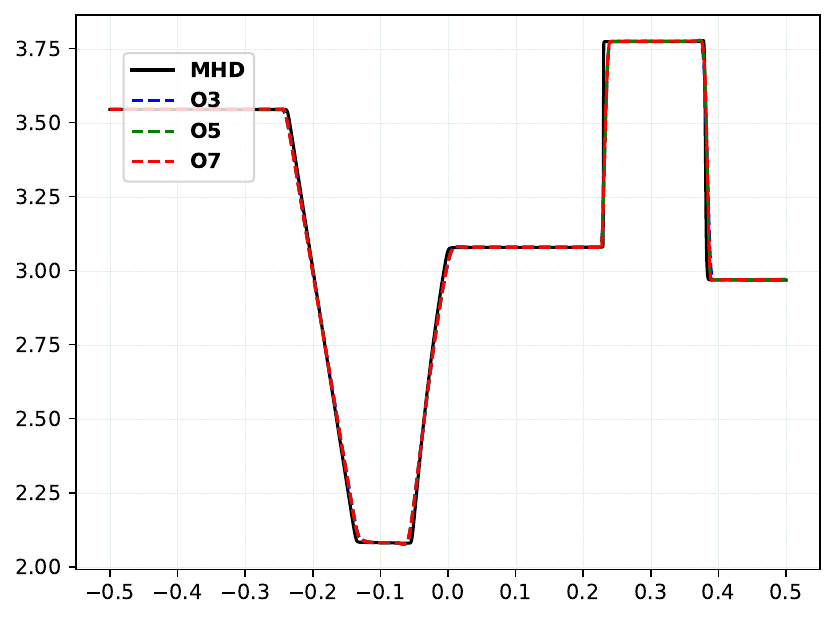}}
	\caption{\textbf{\nameref{test:rp3}:} Plots of density, parallel and perpendicular pressure components and magnetic field in $y$ direction for 3rd, 5th and 7th order numerical schemes without and with source term using the HLL Riemann solver and 800 cells at final time t = 0.15.}
	\label{fig:5}
\end{center}
\end{figure}
\begin{figure}[!htbp]
\begin{center}
	\subfigure[$\rho$ (without source term)]{\includegraphics[width=2.0in, height=1.5in]{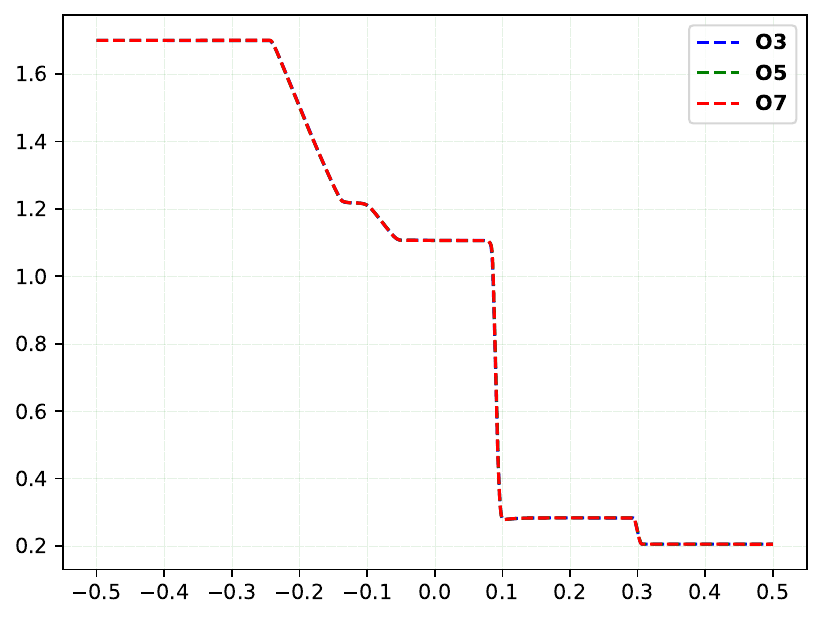}}
    \subfigure[$\rho$ (with source term)]{\includegraphics[width=2.0in, height=1.5in]{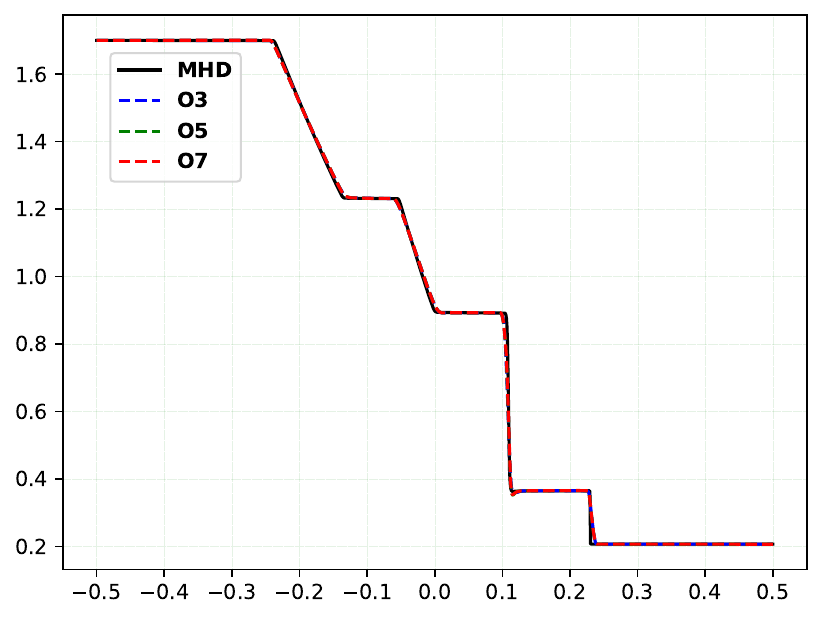}}\\
	\subfigure[$\pll$ (without source term)]{\includegraphics[width=2.0in, height=1.5in]{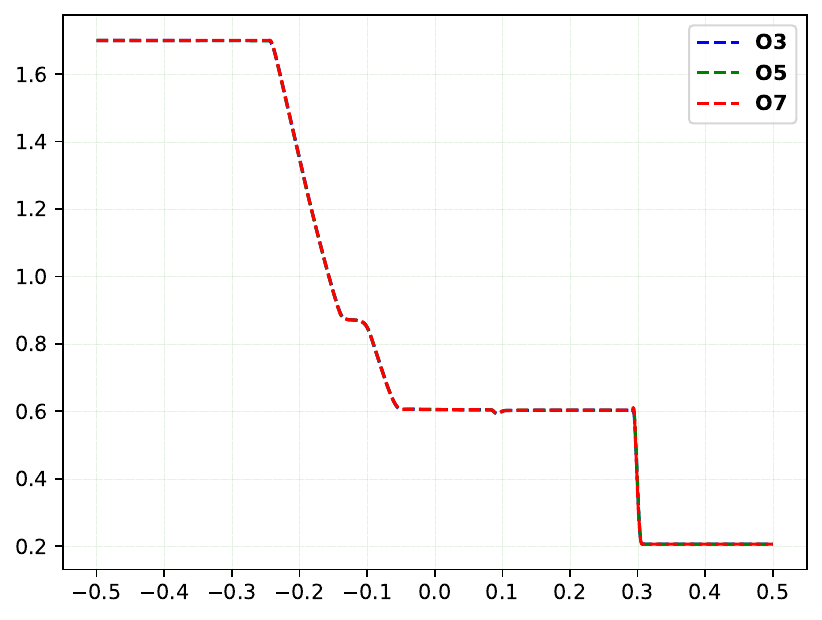}}
    \subfigure[$\pll$ (with source term)]{\includegraphics[width=2.0in, height=1.5in]{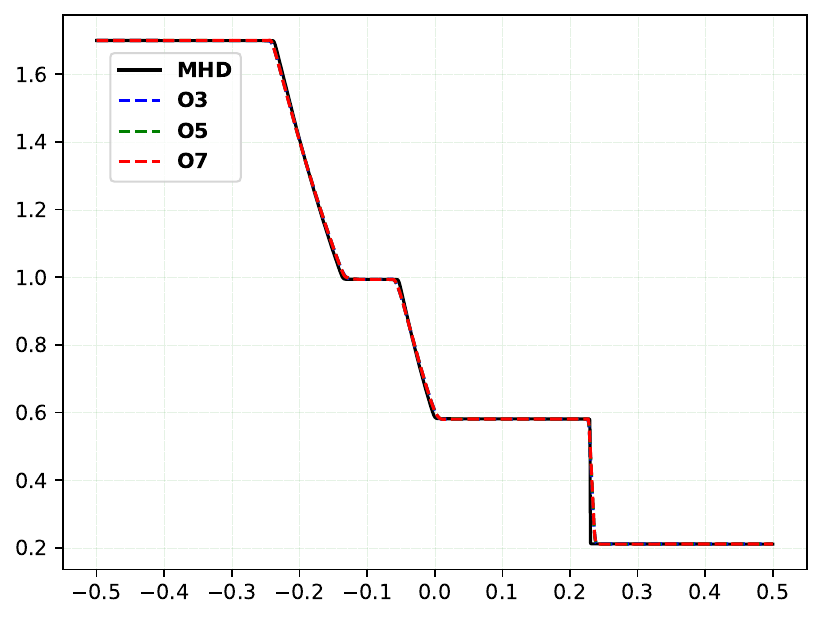}}\\
	\subfigure[$\per$ (without source term)]{\includegraphics[width=2.0in, height=1.5in]{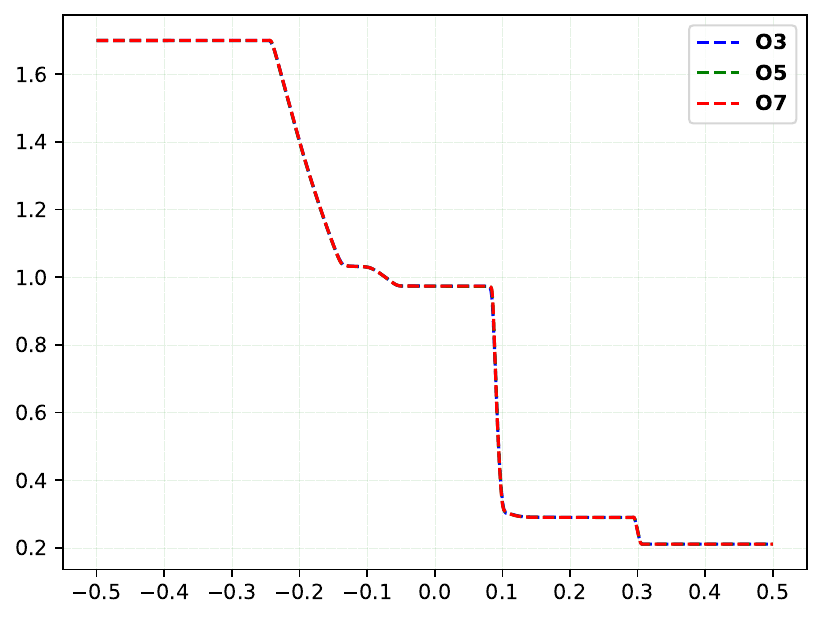}}
    \subfigure[$\per$ (with source term)]{\includegraphics[width=2.0in, height=1.5in]{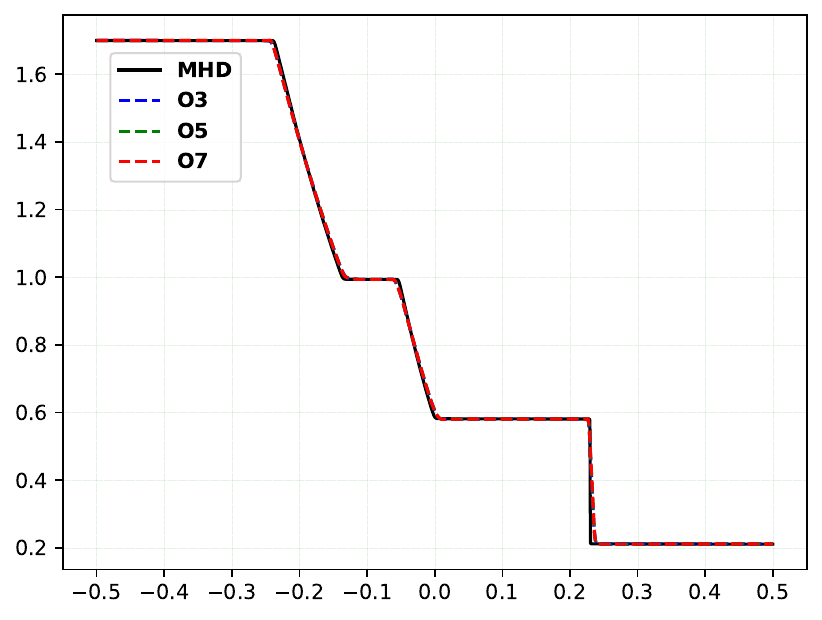}}\\
	\subfigure[$B_y$ (without source term)]{\includegraphics[width=2.0in, height=1.5in]{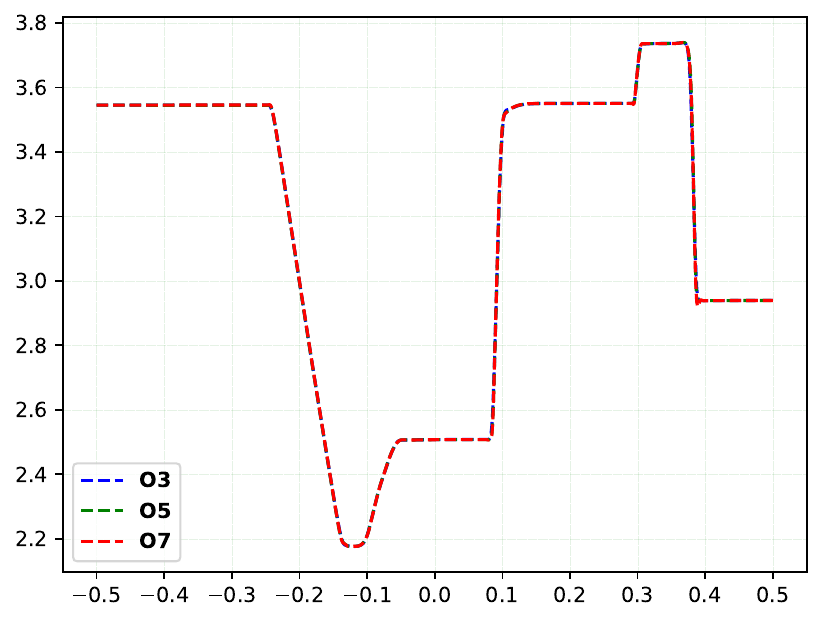}}
        \subfigure[$B_y$ (with source term)]{\includegraphics[width=2.0in, height=1.5in]{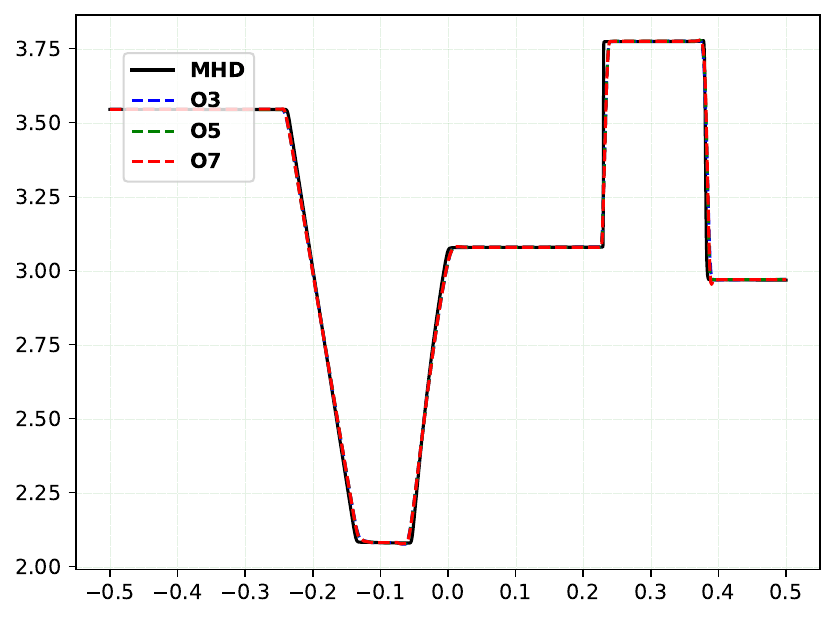}}
	\caption{\textbf{\nameref{test:rp3}:} Plots of density, parallel and perpendicular pressure components and magnetic field in $y$ direction for 3rd, 5th and 7th order numerical schemes without and with source term using the HLLI Riemann solver and 800 cells at final time t = 0.15.}
	\label{fig:5.1}
\end{center}
\end{figure}
\rev{The numerical results using the HLL and HLLI Riemann solvers are presented in Fig.$\eqref{fig:5}$ and Fig.$\eqref{fig:5.1}$}, respectively. We again observe that the results in both isotropic (with source terms) and anisotropic (without source terms) cases are similar to those presented in \cite{singh2024entropy}. In the isotropic case, both pressure profiles match the MHD pressure. We note that the proposed schemes are able to capture all the waves in both cases and do not produce any unphysical oscillations. Furthermore, both solvers have similar results.

\subsection{Riemann Problem 4}
\label{test:rp4}
We again consider a CGL test case from \cite{singh2024entropy}, which is modified from the MHD test case in \cite{dumbser2016new}. The computational domain is $[-0.5,0.5]$ with outflow boundary conditions. The initial conditions are given by,
\[(\rho, u_{x}, u_{y}, u_{z}, \pll, \per, B_{y}, B_{z}) = \begin{cases}
	(1, 0, 0, 0, 1, 1, \sqrt{4\pi}, 0), & \textrm{if } x\leq 0\\
	(0.4, 0, 0, 0, 0.4, 0.4, -\sqrt{4\pi}, 0), & \textrm{otherwise}
\end{cases}\]
Also, we take $B_{x}=1.3 \sqrt{4\pi}$.  The simulation is performed using $800$ computational zones till the final time of $t=0.15$. Numerical results are presented in Figures $\eqref{fig:6}$ and $\eqref{fig:6.1}$ using the HLL and HLLI solvers, respectively. 

We see that all the schemes with both solvers are able to resolve all the waves in both isotropic and anisotropic cases. Furthermore, in the isotropic case (with source terms), the computed solution matches the MHD reference solution. The results are similar to those presented in \cite{singh2024entropy}.
\begin{figure}[!htbp]
\begin{center}
	\subfigure[$\rho$ (without source term)]{\includegraphics[width=2.0in, height=1.5in]{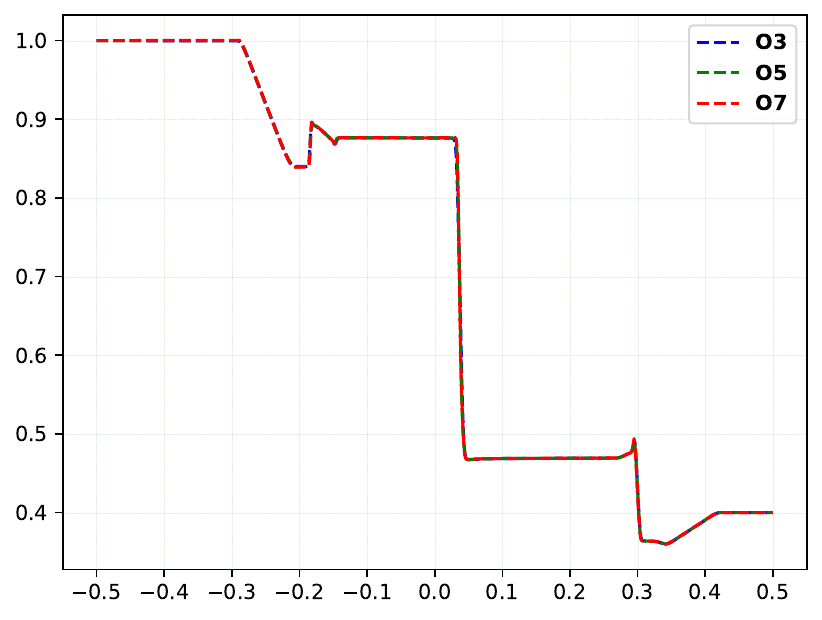}}
    \subfigure[$\rho$ (with source term)]{\includegraphics[width=2.0in, height=1.5in]{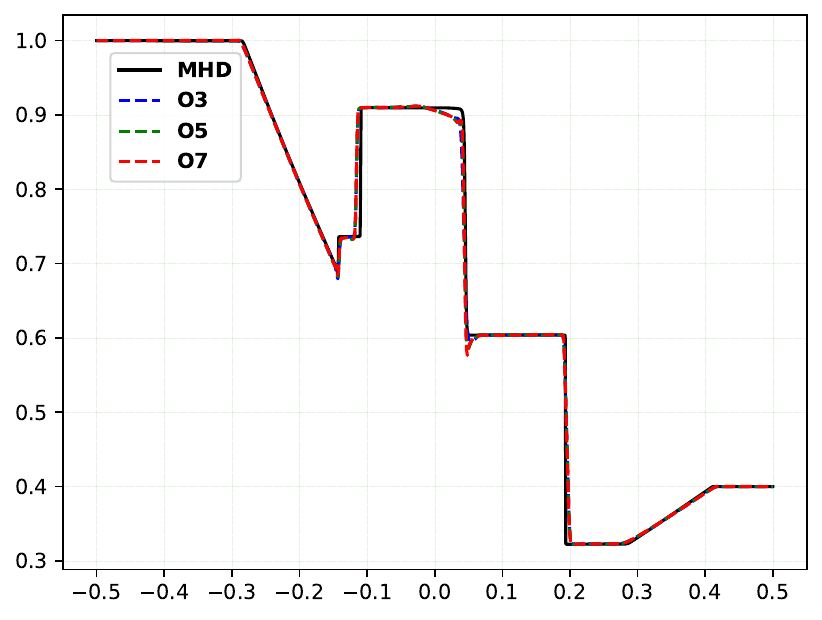}}\\
	\subfigure[$\pll$ (without source term)]{\includegraphics[width=2.0in, height=1.5in]{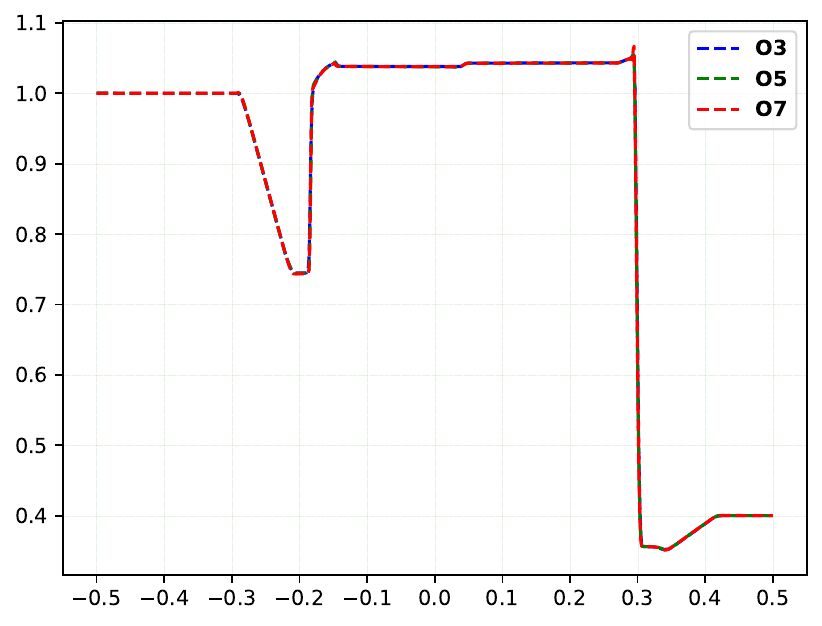}}
    \subfigure[$\pll$ (with source term)]{\includegraphics[width=2.0in, height=1.5in]{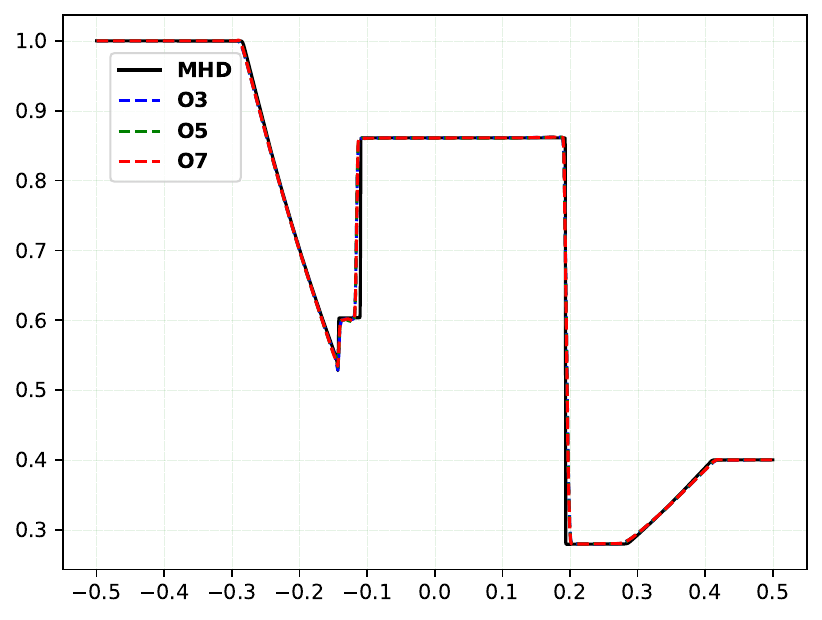}}\\
	\subfigure[$\per$ (without source term)]{\includegraphics[width=2.0in, height=1.5in]{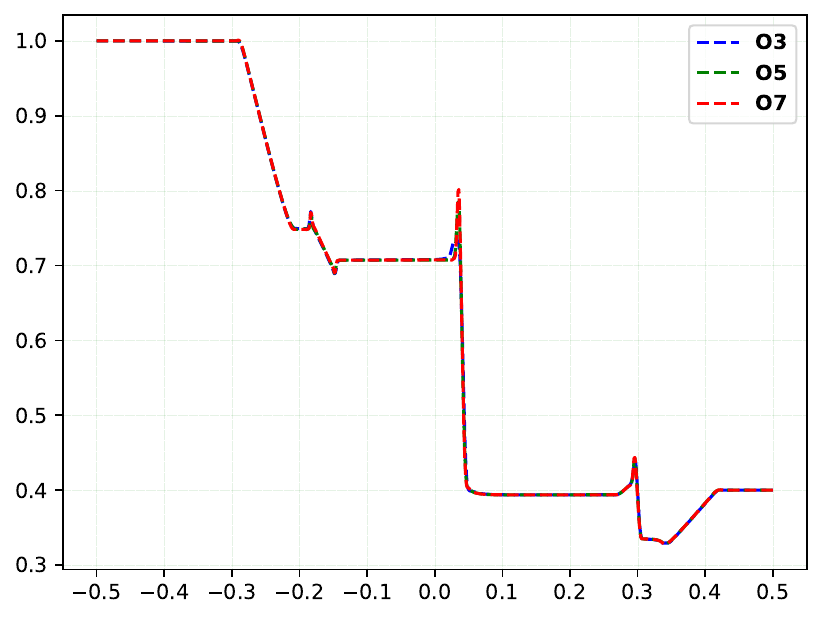}}
    \subfigure[$\per$ (with source term)]{\includegraphics[width=2.0in, height=1.5in]{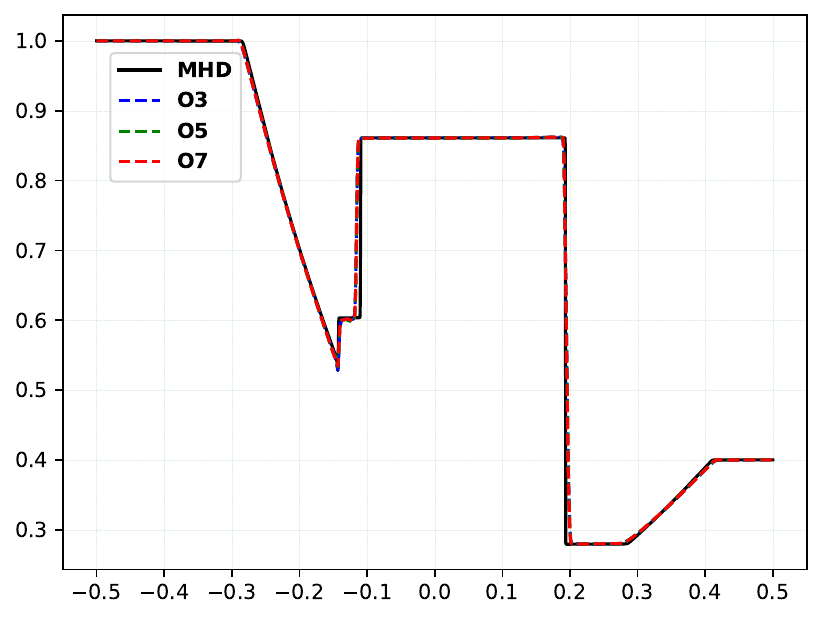}}\\
	\subfigure[$B_y$ (without source term)]{\includegraphics[width=2.0in, height=1.5in]{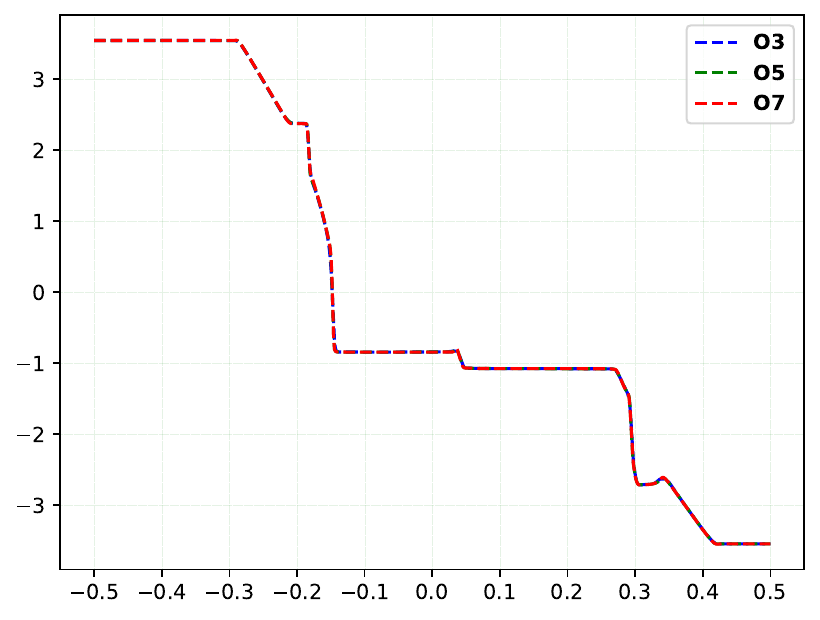}}
	\subfigure[$B_y$ (with source term)]{\includegraphics[width=2.0in, height=1.5in]{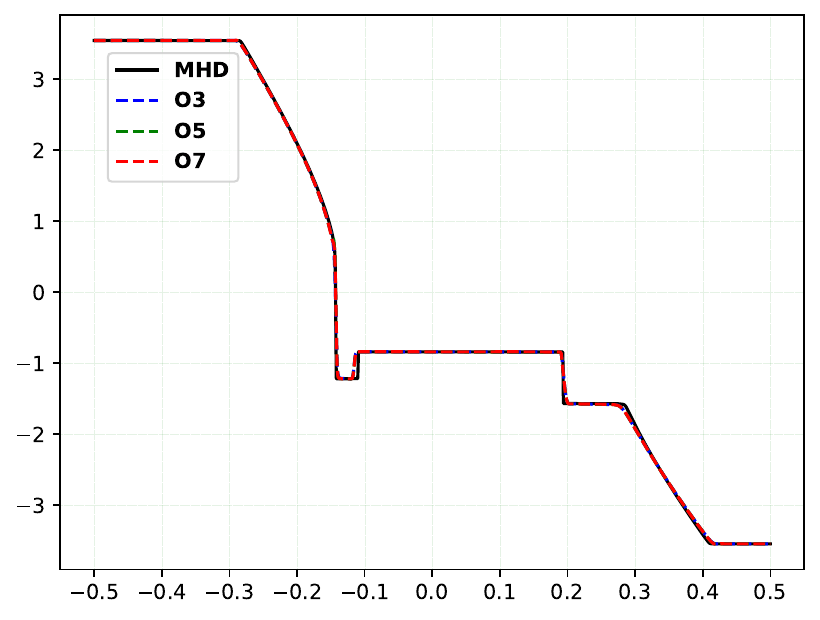}}
	\caption{\textbf{\nameref{test:rp4}:} Plots of density, parallel and perpendicular pressure components and magnetic field in $y$ direction for 3rd, 5th and 7th order numerical schemes without and with source term using the HLL Riemann solver and 800 cells at final time t = 0.15.}
	\label{fig:6}
\end{center}
\end{figure}
\begin{figure}[!htbp]
\begin{center}
	\subfigure[$\rho$ (without source term)]{\includegraphics[width=2.0in, height=1.5in]{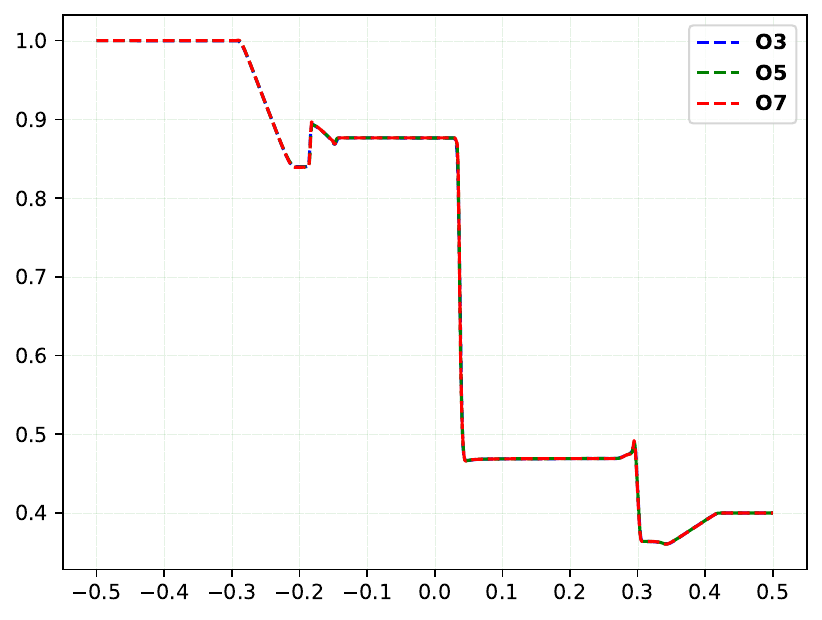}}
    \subfigure[$\rho$ (with source term)]{\includegraphics[width=2.0in, height=1.5in]{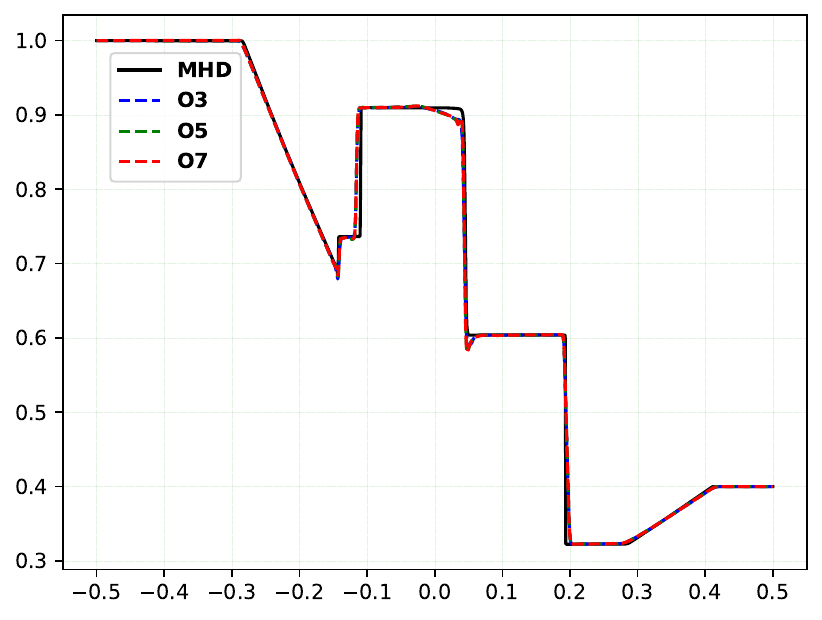}}\\
	\subfigure[$\pll$ (without source term)]{\includegraphics[width=2.0in, height=1.5in]{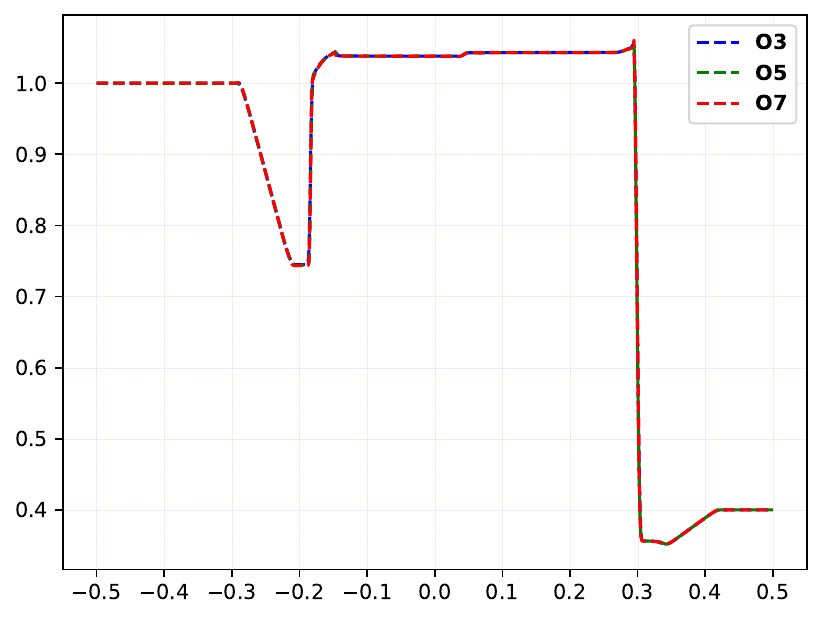}}
    \subfigure[$\pll$ (with source term)]{\includegraphics[width=2.0in, height=1.5in]{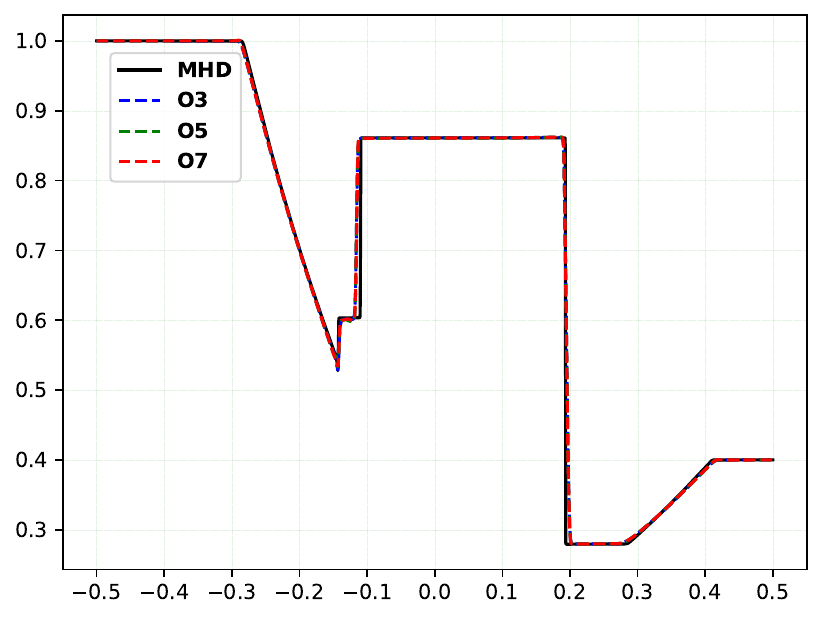}}\\
	\subfigure[$\per$ (without source term)]{\includegraphics[width=2.0in, height=1.5in]{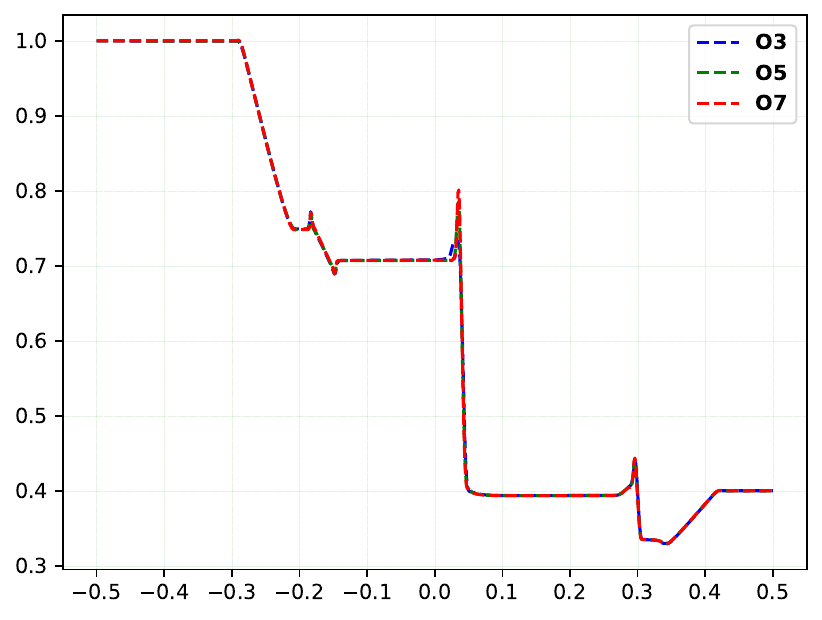}}
    \subfigure[$\per$ (with source term)]{\includegraphics[width=2.0in, height=1.5in]{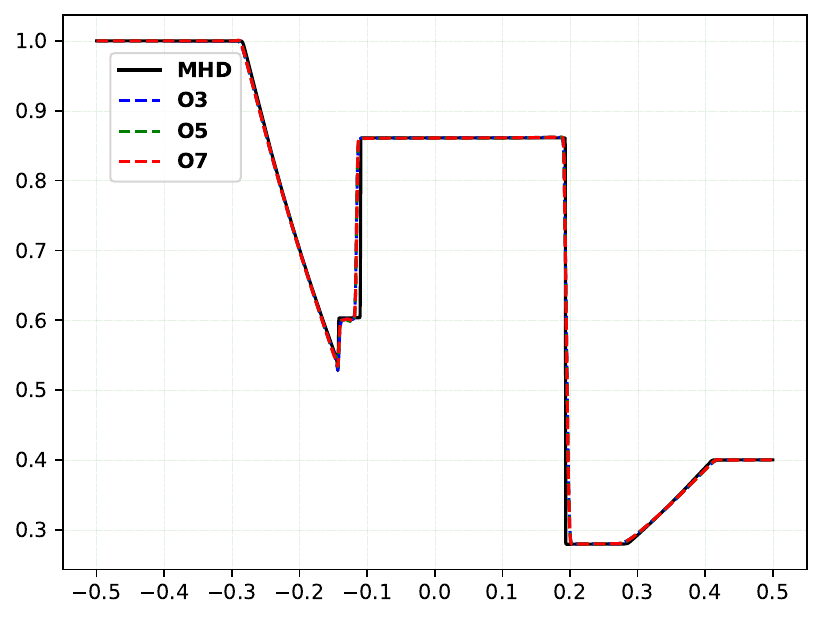}}\\
	\subfigure[$B_y$ (without source term)]{\includegraphics[width=2.0in, height=1.5in]{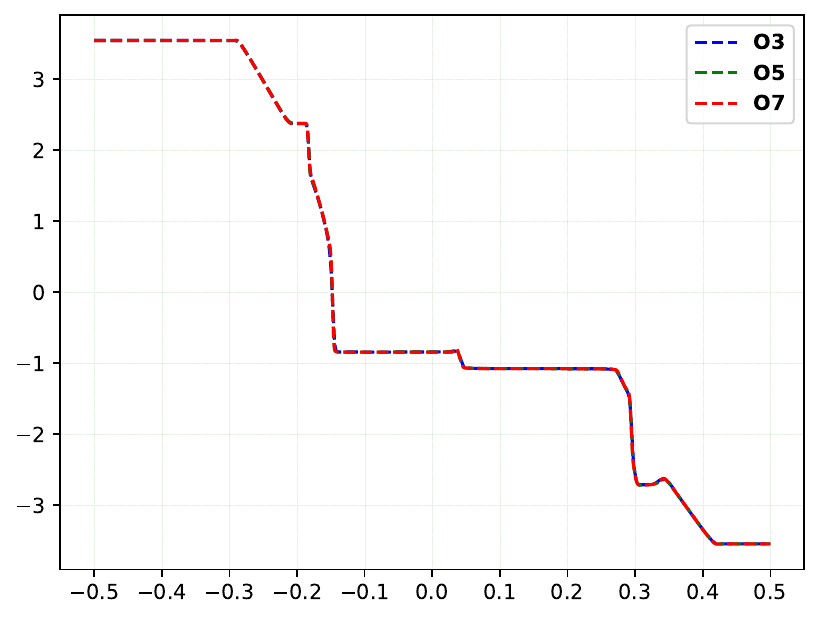}}
	\subfigure[$B_y$ (with source term)]{\includegraphics[width=2.0in, height=1.5in]{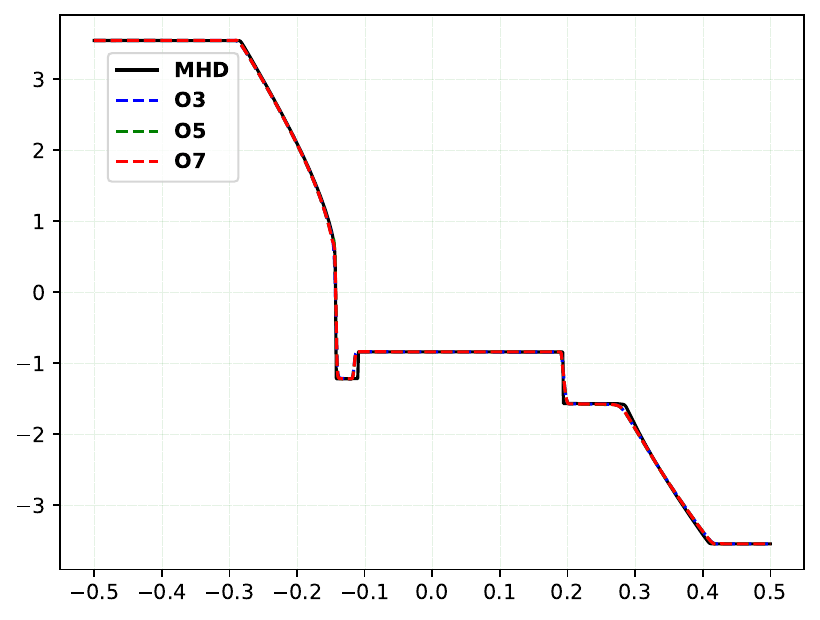}}
	\caption{\textbf{\nameref{test:rp4}:} Plots of density, parallel and perpendicular pressure components and magnetic field in $y$ direction for 3rd, 5th and 7th order numerical schemes without and with source term using the HLLI Riemann solver and 800 cells at final time t = 0.15.}
	\label{fig:6.1}
\end{center}
\end{figure}

\subsection{Riemann Problem 5}
\label{test:rp5}
In this test case, we again consider the CGL generalization (\cite{singh2024entropy}) of a MHD Riemann problem (\cite{Balsara2018efficient}). The computational domain is taken to be $[-0.5,0.5]$ with outflow boundary conditions. The initial conditions are taken to be,
\[(\rho, u_{x}, u_{y}, u_{z}, \pll, \per, B_{y}, B_{z}) = \begin{cases}
\left(\frac{1}{4\pi}, -1, 1, -1, 1, 1, -1, 1\right), & \textrm{if } x\leq 0\\
\left(\frac{1}{4\pi}, -1, -1, -1, 1, 1, 1, 1\right), & \textrm{otherwise}
\end{cases}\]
with $B_{x}=1$. We again consider $800$ cells and compute till the final time of $t=0.1$.
\begin{figure}[!htbp]
\begin{center}
	\subfigure[$u_y$ (without source term)]{\includegraphics[width=2.0in, height=1.5in]{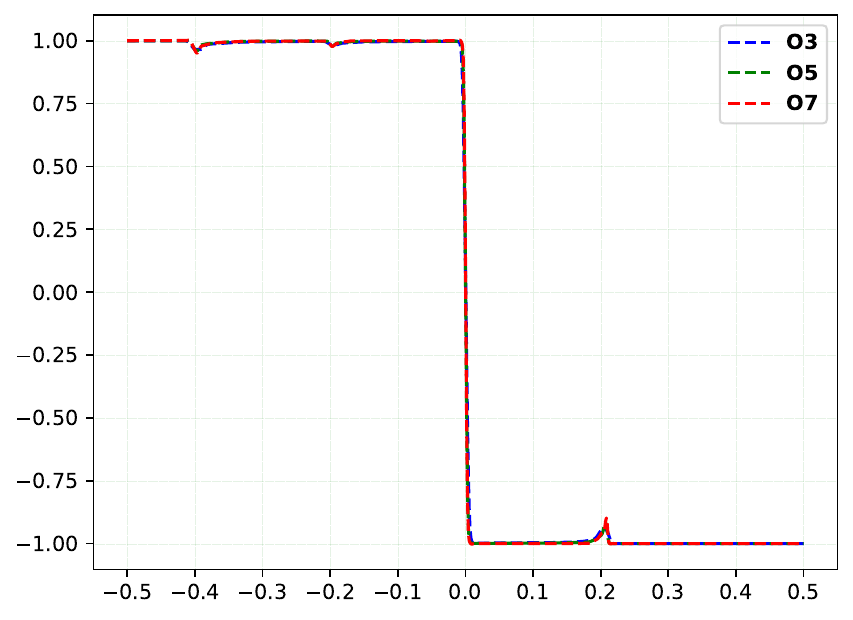}}
    \subfigure[$u_y$ (with source term)]{\includegraphics[width=2.0in, height=1.5in]{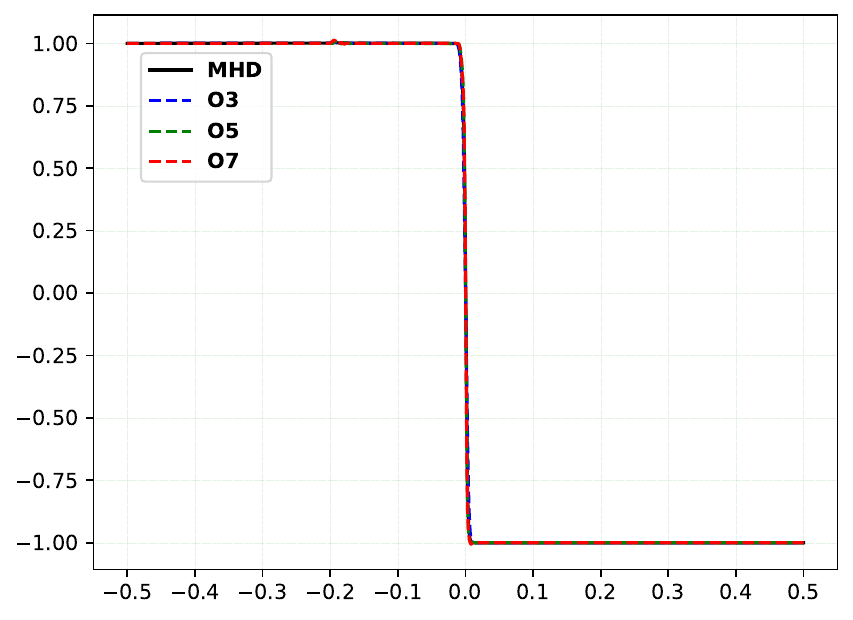}}\\
	\subfigure[$\pll$ (without source term)]{\includegraphics[width=2.0in, height=1.5in]{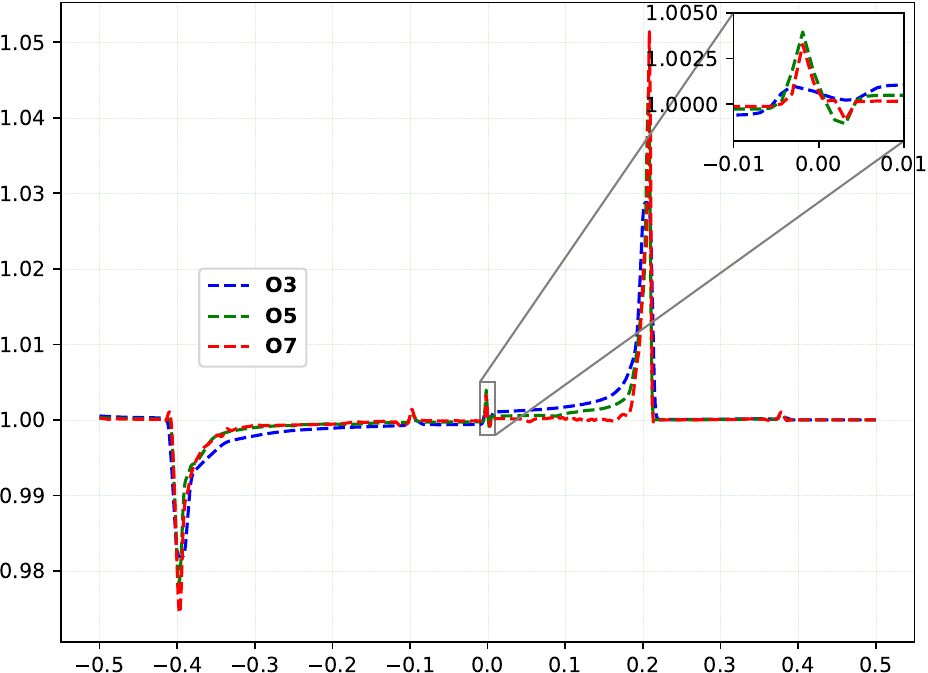}}
    \subfigure[$\pll$ (with source term)]{\includegraphics[width=2.0in, height=1.5in]{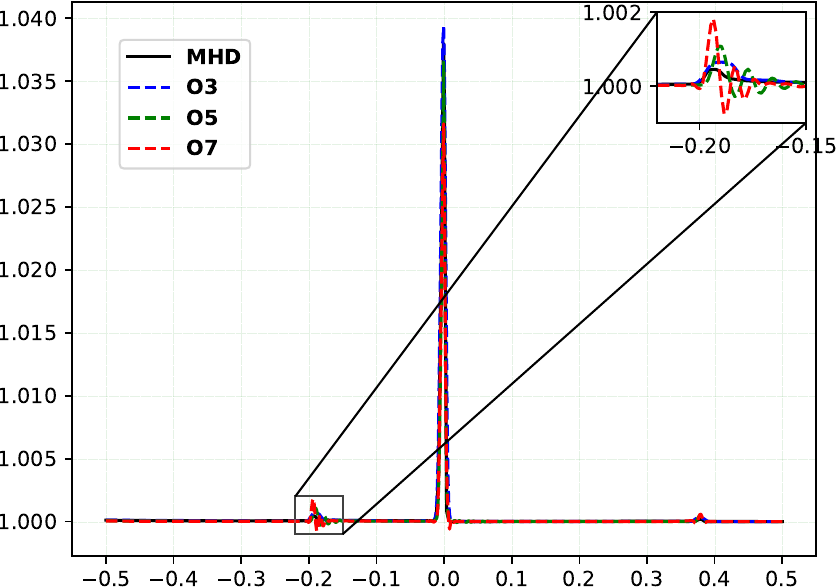}}\\
	\subfigure[$\per$ (without source term)]{\includegraphics[width=2.0in, height=1.5in]{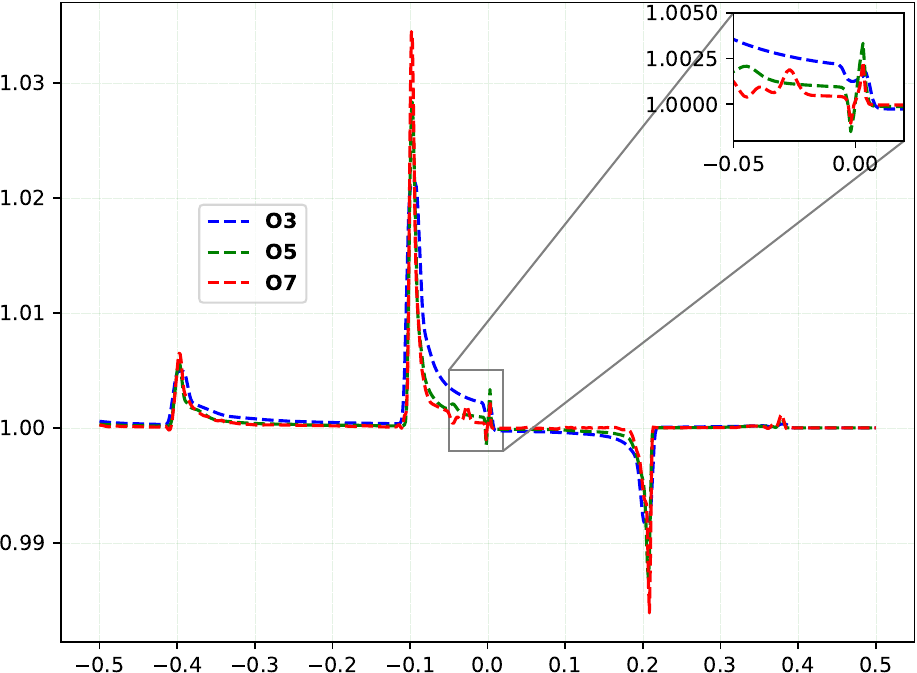}}
    \subfigure[$\per$ (with source term)]{\includegraphics[width=2.0in, height=1.5in]{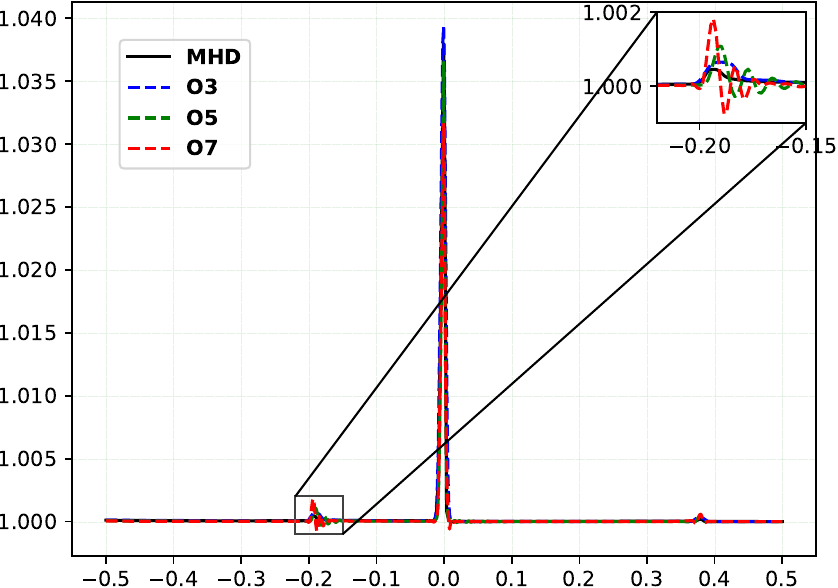}}\\
	\subfigure[$B_y$ (without source term)]{\includegraphics[width=2.0in, height=1.5in]{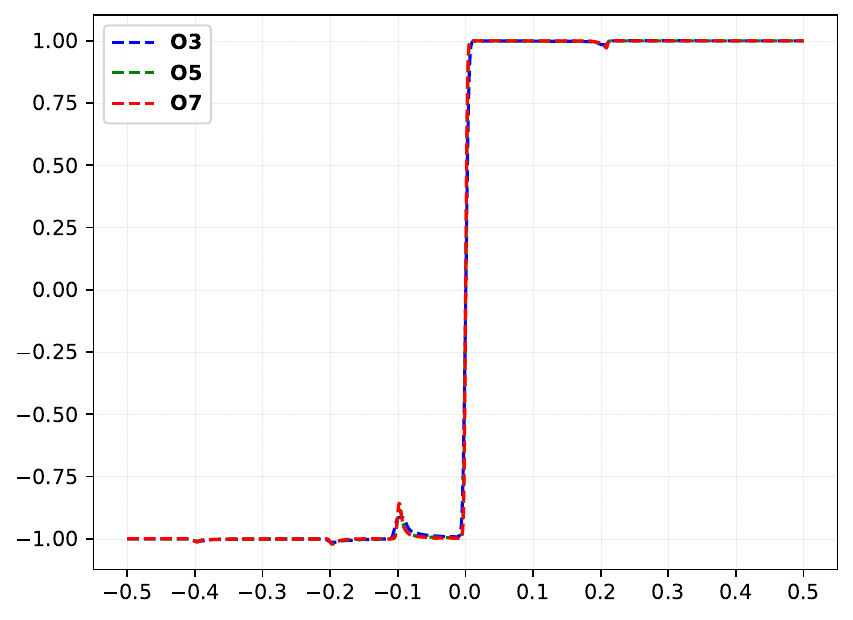}}
	\subfigure[$B_y$ (with source term)]{\includegraphics[width=2.0in, height=1.5in]{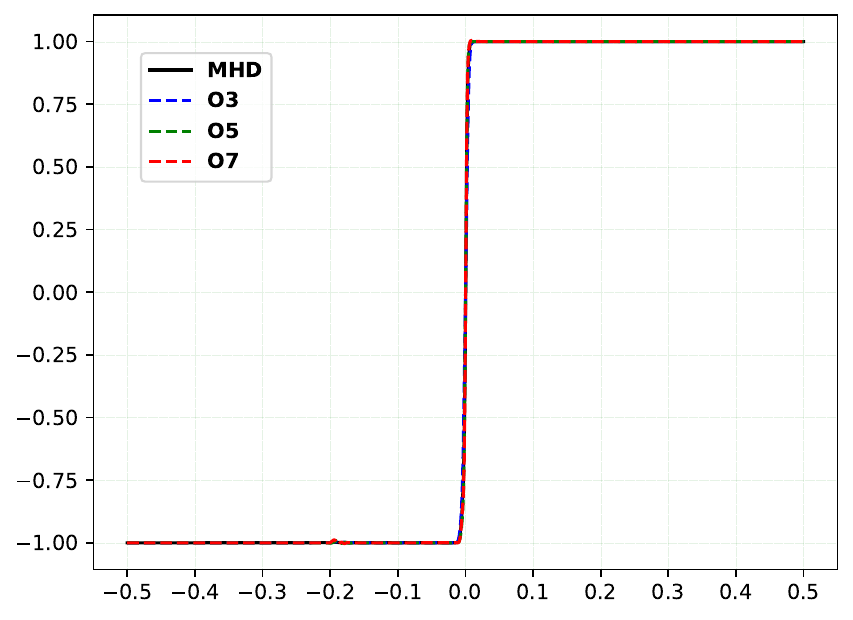}}
	\caption{\textbf{\nameref{test:rp5}:} Plots of velocity in $y$ direction, parallel and perpendicular pressure components and magnetic field in $y$ direction for 3rd, 5th and 7th order numerical schemes without and with source term using the HLL Riemann solver and 800 cells at final time t = 0.1.}
	\label{fig:7}
\end{center}
\end{figure}
\begin{figure}[!htbp]
\begin{center}
	\subfigure[$u_y$ (without source term)]{\includegraphics[width=2.0in, height=1.5in]{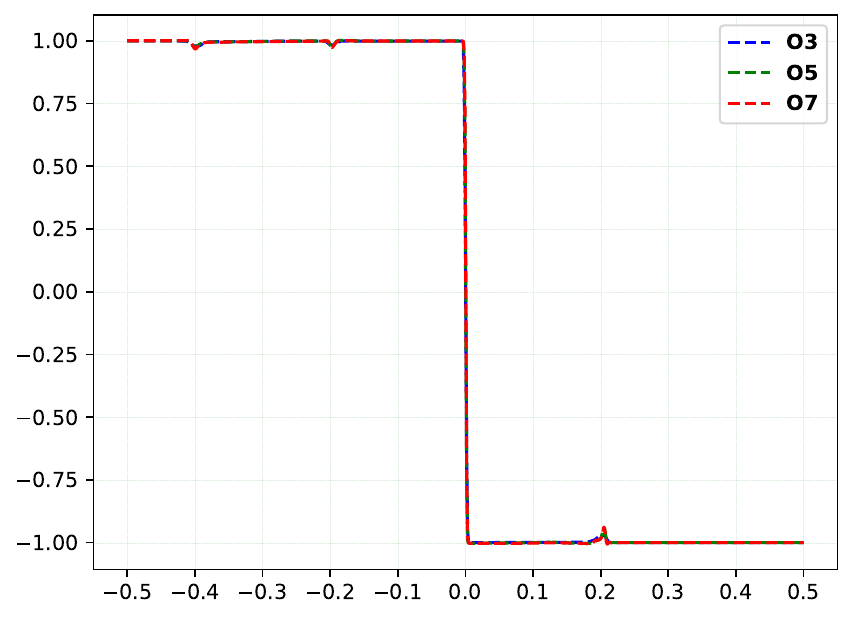}}
    \subfigure[$u_y$ (with source term)]{\includegraphics[width=2.0in, height=1.5in]{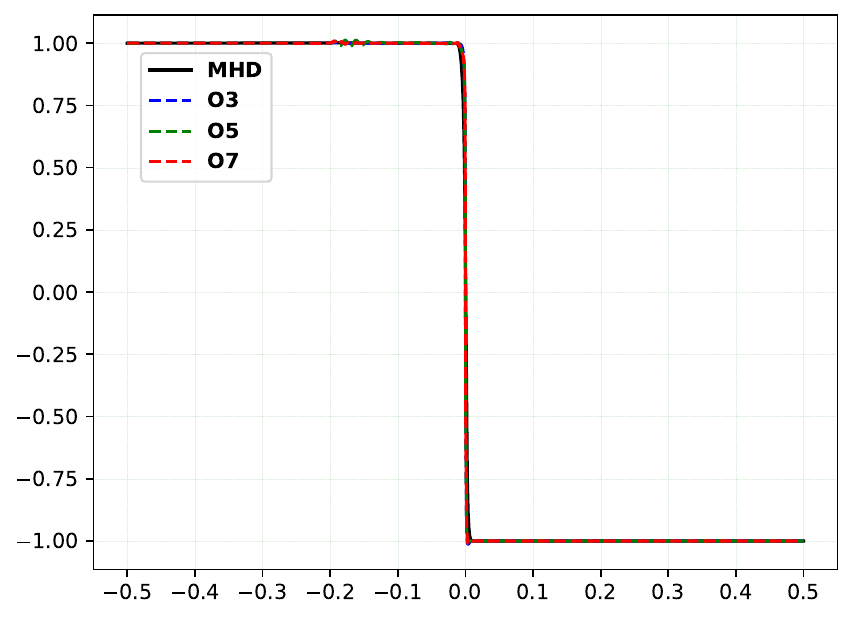}}\\
	\subfigure[$\pll$ (without source term)]{\includegraphics[width=2.0in, height=1.5in]{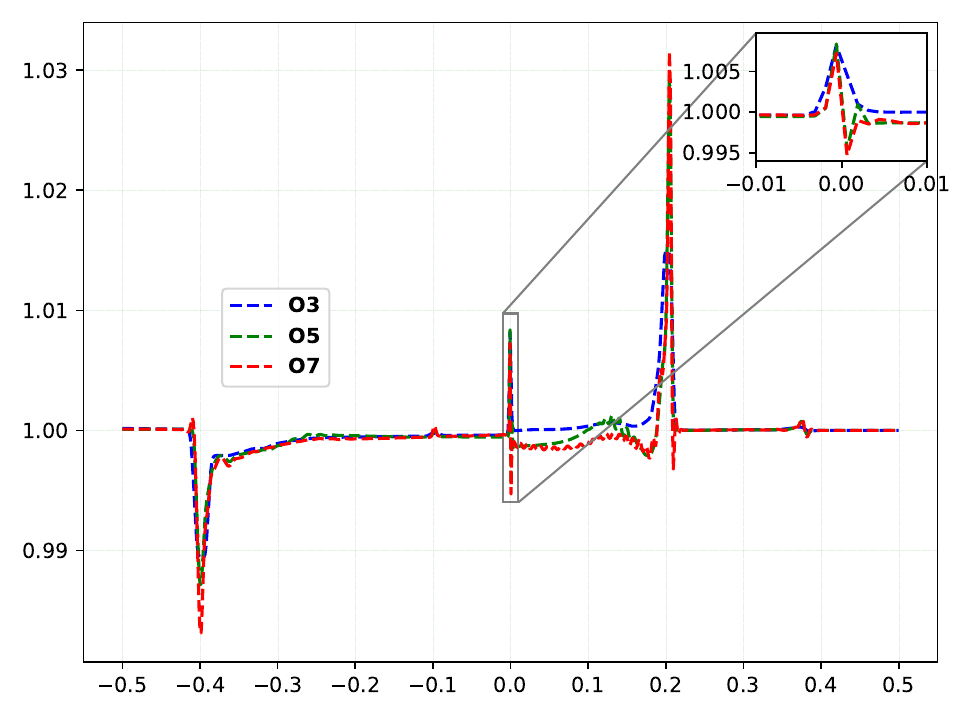}}
    \subfigure[$\pll$ (with source term)]{\includegraphics[width=2.3in, height=1.7in]{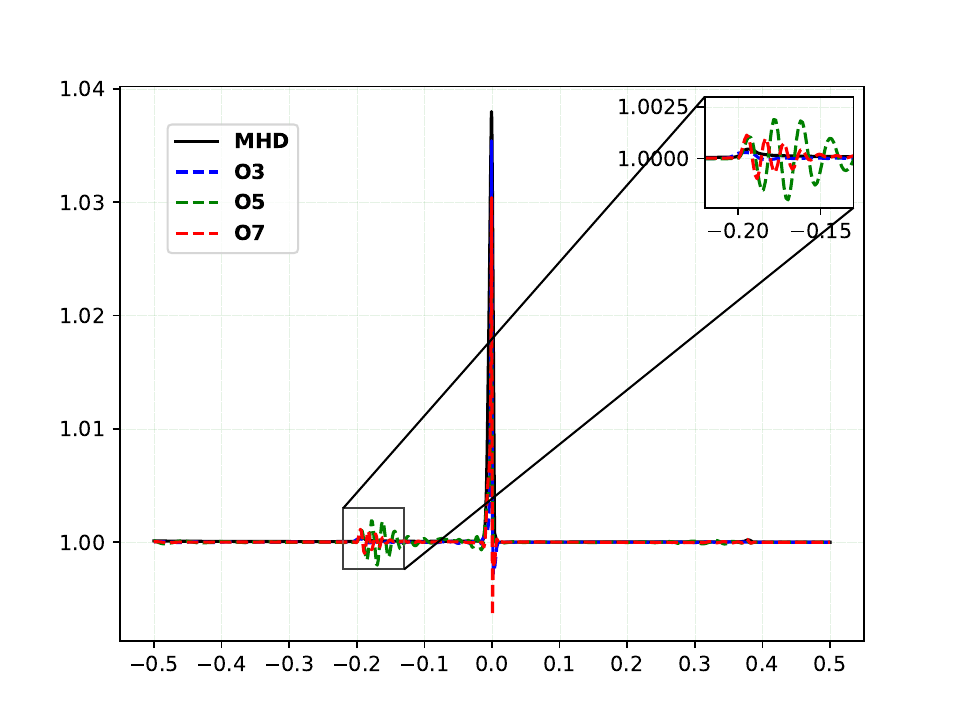}}\\
	\subfigure[$\per$ (without source term)]{\includegraphics[width=2.0in, height=1.5in]{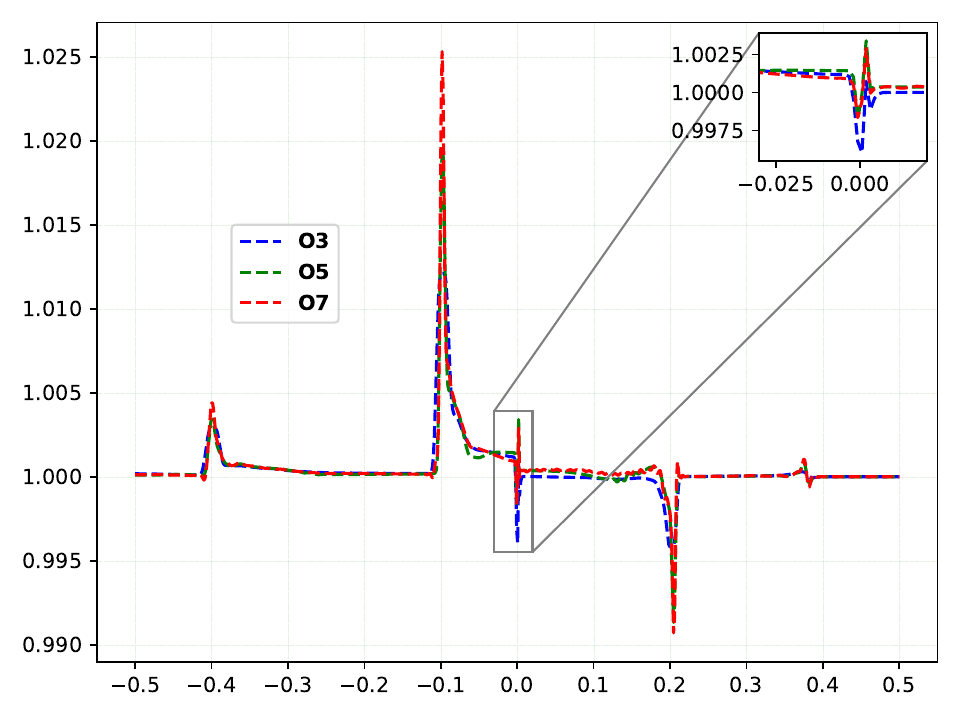}}
    \subfigure[$\per$ (with source term)]{\includegraphics[width=2.3in, height=1.7in]{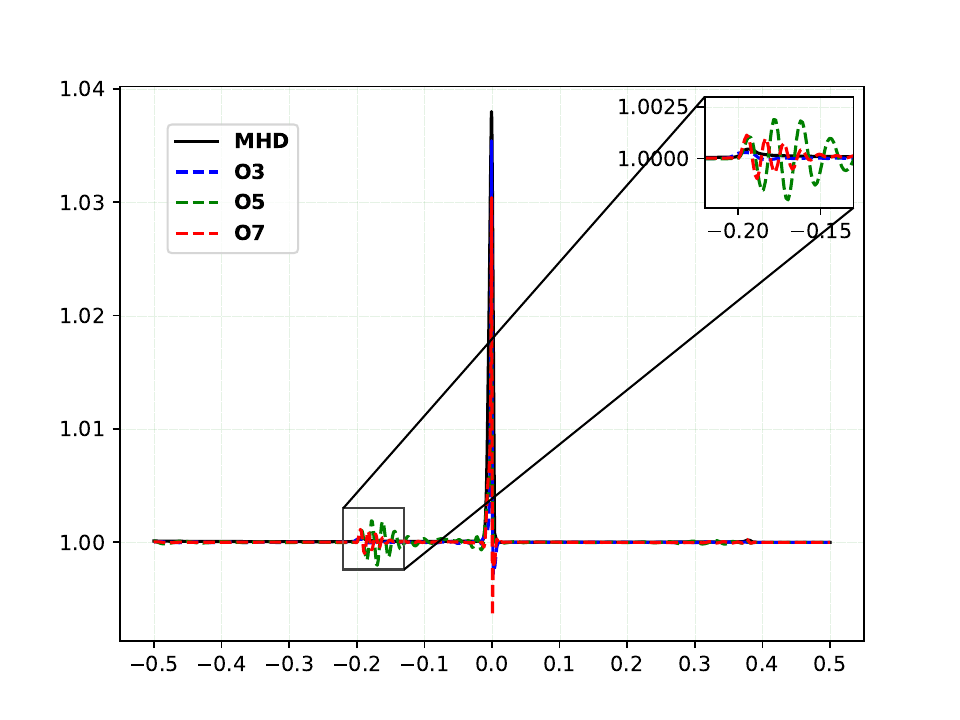}}\\
	\subfigure[$B_y$ (without source term)]{\includegraphics[width=2.0in, height=1.5in]{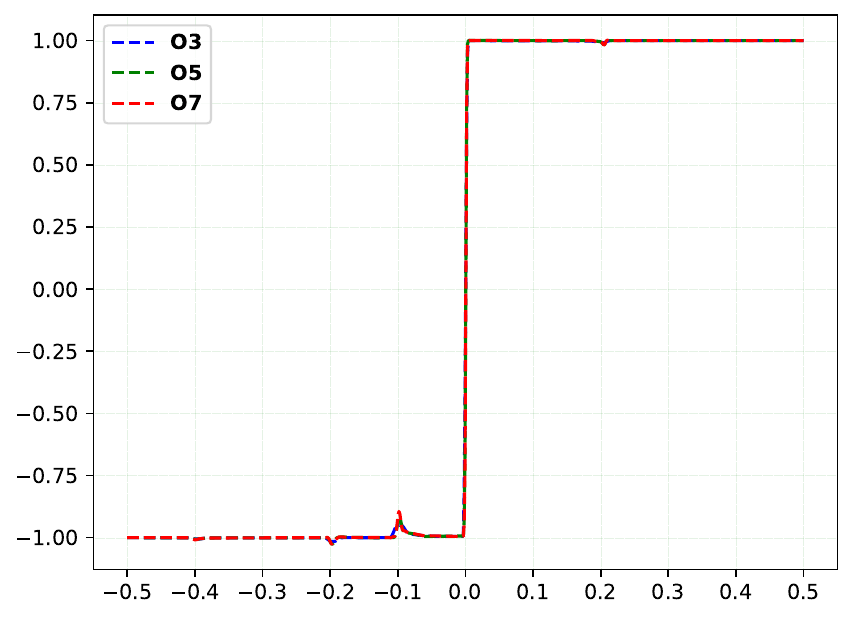}}
	\subfigure[$B_y$ (with source term)]{\includegraphics[width=2.0in, height=1.5in]{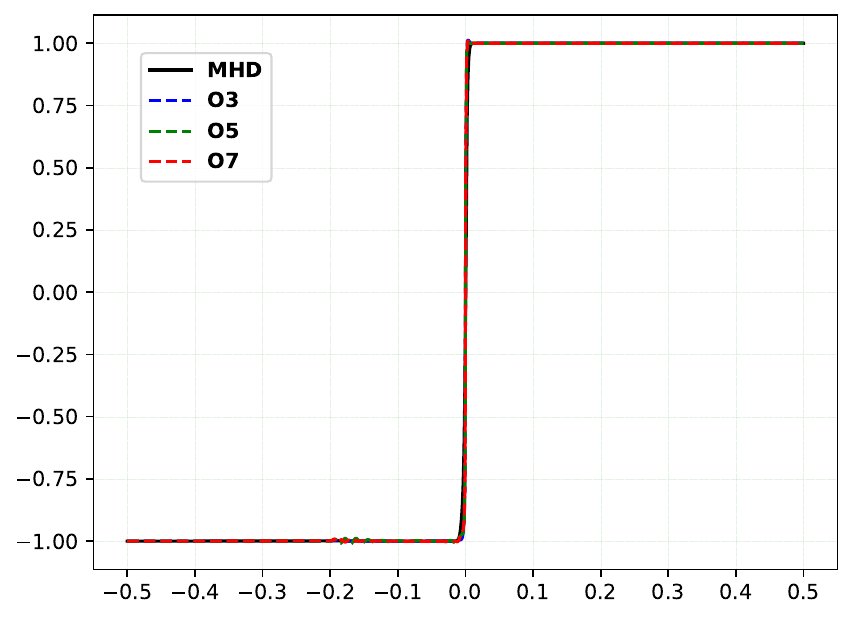}}
	\caption{\textbf{\nameref{test:rp5}:} Plots of velocity in $y$ direction, parallel and perpendicular pressure components and magnetic field in $y$ direction for 3rd, 5th and 7th order numerical schemes without and with source term using the HLLI Riemann solver and 800 cells at final time t = 0.1.}
	\label{fig:7.1}
\end{center}
\end{figure}
The numerical results are presented in Figures $\eqref{fig:7}$ and $\eqref{fig:7.1}$. We observe that the schemes using both solvers can resolve small waves moving outward. We also note that the 5th and 7th-order schemes are oscillatory. Furthermore, in the isotropic case, both pressure components match with MHD pressure. We do not observe any significant difference in HLL and HLLI solvers in this test case. The results are similar to those presented in \cite{singh2024entropy}.

\section{Conclusion}
\label{sec:con}
CGL equations, also known as the double-adiabatic model, are used to model plasma flows in the presence of a strong magnetic field. These equations are a set of hyperbolic PDEs with non-conservative products. In this article, we have presented the eigenvalues and the complete set of associated eigenvectors. We have also proved the linear degeneracy of some of the characteristic fields. \rev{Using the eigenvalues and eigenvectors, we also design the HLL and HLLI Riemann solver for the CGL equations}. Finally, using the eigensystem, we design very high-order AFD-WENO schemes in one dimension. We then test the proposed schemes on an extensive set of test cases. Several of these cases are motivated by MHD test cases. To capture the isotropic (MHD)limit, we also consider a stiff source term, which is treated using IMEX schemes. The extension of these schemes to higher dimensions, however, needs a proper treatment of divergence-free constraint on the magnetic field. Furthermore, the eigenvectors of the CGL equations have several degeneracies that need to be adequately treated for stable numerical schemes in higher dimensions. We plan to address these issues in future work.

\section*{Acknowledgements}
The work of Harish Kumar is supported in parts by VAJRA grant No. VJR/2018/000129 by the Dept. of Science and Technology, Govt. of India. Harish Kumar and Chetan Singh acknowledge the support of FIST Grant Ref No. SR/FST/MS-1/2019/45 by the Dept. of Science and Technology, Govt. of India.

\printcredits

\bibliographystyle{abbrv}

\bibliography{cas-refs}

\begin{thebibliography}{10}

\bibitem{abgrall2010comment}
R.~Abgrall and S.~Karni.
\newblock A comment on the computation of non-conservative products.
\newblock {\em Journal of Computational Physics}, 229(8):2759--2763, 2010.

\bibitem{balsara1998total}
D.~S. Balsara.
\newblock Total variation diminishing scheme for adiabatic and isothermal magnetohydrodynamics.
\newblock {\em The Astrophysical Journal Supplement Series}, 116(1):133, 1998.

\bibitem{balsara2012self}
D.~S. Balsara.
\newblock Self-adjusting, positivity preserving high order schemes for hydrodynamics and magnetohydrodynamics.
\newblock {\em Journal of Computational Physics}, 231(22):7504--7517, 2012.

\bibitem{balsara2023efficient1}
D.~S. Balsara, D.~Bhoriya, C.-W. Shu, and H.~Kumar.
\newblock Efficient finite difference weno scheme for hyperbolic systems with non-conservative products.
\newblock {\em Communications on Applied Mathematics and Computation}, pages 1--56, 2023.

\bibitem{balsara2024efficient}
D.~S. Balsara, D.~Bhoriya, C.-W. Shu, and H.~Kumar.
\newblock Efficient alternative finite difference weno schemes for hyperbolic conservation laws.
\newblock {\em Communications on Applied Mathematics and Computation}, pages 1--54, 2024.

\bibitem{balsara2024efficientnc}
D.~S. Balsara, D.~Bhoriya, C.-W. Shu, and H.~Kumar.
\newblock Efficient alternative finite difference weno schemes for hyperbolic systems with non-conservative products.
\newblock {\em Communications on Applied Mathematics and Computation}, pages 1--50, 2024.

\bibitem{balsara2020efficient}
D.~S. Balsara, S.~Garain, V.~Florinski, and W.~Boscheri.
\newblock An efficient class of weno schemes with adaptive order for unstructured meshes.
\newblock {\em Journal of Computational Physics}, 404:109062, 2020.

\bibitem{balsara2016efficient}
D.~S. Balsara, S.~Garain, and C.-W. Shu.
\newblock An efficient class of weno schemes with adaptive order.
\newblock {\em Journal of Computational Physics}, 326:780--804, 2016.

\bibitem{Balsara2018efficient}
D.~S. Balsara, J.~Li, and G.~I. Montecinos.
\newblock An efficient, second order accurate, universal generalized {Riemann} problem solver based on the {HLLI} {Riemann} solver.
\newblock {\em Journal of Computational Physics}, 375:1238--1269, 12 2018.

\bibitem{balsara2009efficient}
D.~S. Balsara, T.~Rumpf, M.~Dumbser, and C.-D. Munz.
\newblock Efficient, high accuracy ader-weno schemes for hydrodynamics and divergence-free magnetohydrodynamics.
\newblock {\em Journal of Computational Physics}, 228(7):2480--2516, 2009.

\bibitem{balsara2023efficient}
D.~S. Balsara, S.~Samantaray, and S.~Subramanian.
\newblock Efficient weno-based prolongation strategies for divergence-preserving vector fields.
\newblock {\em Communications on Applied Mathematics and Computation}, 5(1):428--484, 2023.

\bibitem{bhoriya2024}
D.~Bhoriya, D.~Balsara, V.~Florinski, and H.~Kumar.
\newblock Going beyond the mhd approximation: Physics-based numerical solution of the cgl equations.
\newblock {\em The Astrophysical Journal, arXiv preprint arXiv:2405.17487}, 2024.

\bibitem{Brio1988upwind}
M.~Brio and C.~Wu.
\newblock An upwind differencing scheme for the equations of ideal magnetohydrodynamics.
\newblock {\em Journal of Computational Physics}, 75(2):400--422, 4 1988.

\bibitem{burch2016magnetospheric}
J.~Burch, T.~Moore, R.~Torbert, and B.-h. Giles.
\newblock Magnetospheric multiscale overview and science objectives.
\newblock {\em Space Science Reviews}, 199:5--21, 2016.

\bibitem{chandran2011incorporating}
B.~D. Chandran, T.~J. Dennis, E.~Quataert, and S.~D. Bale.
\newblock Incorporating kinetic physics into a two-fluid solar-wind model with temperature anisotropy and low-frequency alfv{\'e}n-wave turbulence.
\newblock {\em The Astrophysical Journal}, 743(2):197, 2011.

\bibitem{Chandrashekar2016Entropy}
P.~Chandrashekar and C.~Klingenberg.
\newblock Entropy {Stable} {Finite} {Volume} {Scheme} for {Ideal} {Compressible} {MHD} on 2-{D} {Cartesian} {Meshes}.
\newblock {\em SIAM Journal on Numerical Analysis}, 54(2):1313--1340, 1 2016.

\bibitem{chew1956boltzmann}
G.~Chew, M.~Goldberger, and F.~Low.
\newblock The boltzmann equation and the one-fluid hydromagnetic equations in the absence of particle collisions.
\newblock {\em Proceedings of the Royal Society of London. Series A. Mathematical and Physical Sciences}, 236(1204):112--118, 1956.

\bibitem{dal1995definition}
G.~Dal~Maso, P.~G. Lefloch, and F.~Murat.
\newblock Definition and weak stability of nonconservative products.
\newblock {\em Journal de math{\'e}matiques pures et appliqu{\'e}es}, 74(6):483--548, 1995.

\bibitem{dumbser2016new}
M.~Dumbser and D.~S. Balsara.
\newblock A new efficient formulation of the hllem riemann solver for general conservative and non-conservative hyperbolic systems.
\newblock {\em Journal of Computational Physics}, 304:275--319, 2016.

\bibitem{dumin2002corotation}
Y.~V. Dumin.
\newblock The corotation field in collisionless magnetospheric plasma and its influence on average electric field in the lower atmosphere.
\newblock {\em Advances in Space Research}, 30(10):2209--2214, 2002.

\bibitem{hakim2008extended}
A.~H. Hakim.
\newblock Extended mhd modelling with the ten-moment equations.
\newblock {\em Journal of Fusion Energy}, 27:36--43, 2008.

\bibitem{heinemann1999role}
M.~Heinemann.
\newblock Role of collisionless heat flux in magnetospheric convection.
\newblock {\em Journal of Geophysical Research: Space Physics}, 104(A12):28397--28410, 1999.

\bibitem{Hirabayashi2013Magnetic}
K.~Hirabayashi and M.~Hoshino.
\newblock Magnetic reconnection under anisotropic magnetohydrodynamic approximation.
\newblock {\em Physics of Plasmas}, 20(11):112111, 11 2013.

\bibitem{Hirabayashi2016new}
K.~Hirabayashi, M.~Hoshino, and T.~Amano.
\newblock A new framework for magnetohydrodynamic simulations with anisotropic pressure.
\newblock {\em Journal of Computational Physics}, 327:851--872, 12 2016.

\bibitem{hollweg1976collisionless}
J.~V. Hollweg.
\newblock Collisionless electron heat conduction in the solar wind.
\newblock {\em Journal of Geophysical Research}, 81(10):1649--1658, 1976.

\bibitem{huang2019six}
Z.~Huang, G.~T{\'o}th, B.~van~der Holst, Y.~Chen, and T.~Gombosi.
\newblock A six-moment multi-fluid plasma model.
\newblock {\em Journal of Computational Physics}, 387:134--153, 2019.

\bibitem{hunana2019brief}
P.~Hunana, A.~Tenerani, G.~Zank, E.~Khomenko, M.~Goldstein, G.~Webb, M.~Velli, and L.~Adhikari.
\newblock A brief guide to fluid models with anisotropic temperatures part 1-cgl description and collisionless fluid hierarchy.
\newblock {\em arXiv preprint arXiv:1901.09354}, 2019.

\bibitem{ichimaru1977bimodal}
S.~Ichimaru.
\newblock Bimodal behavior of accretion disks-theory and application to cygnus x-1 transitions.
\newblock {\em Astrophysical Journal, Part 1, vol. 214, June 15, 1977, p. 840-855. Research supported by the Japan Society for the Promotion of Science}, 214:840--855, 1977.

\bibitem{ju2013asymptotic}
Q.~Ju, F.~Li, and Y.~Li.
\newblock Asymptotic limits of the full compressible magnetohydrodynamic equations.
\newblock {\em SIAM Journal on Mathematical Analysis}, 45(5):2597--2624, 2013.

\bibitem{karimabadi2014link}
H.~Karimabadi, V.~Roytershteyn, H.~Vu, Y.~Omelchenko, J.~Scudder, W.~Daughton, A.~Dimmock, K.~Nykyri, M.~Wan, D.~Sibeck, et~al.
\newblock The link between shocks, turbulence, and magnetic reconnection in collisionless plasmas.
\newblock {\em Physics of Plasmas}, 21(6), 2014.

\bibitem{kato1966propagation}
Y.~Kato, M.~Tajiri, and T.~Taniuti.
\newblock Propagation of hydromagnetic waves in collisionless plasma. i.
\newblock {\em Journal of the Physical Society of Japan}, 21(4):765--777, 1966.

\bibitem{kupka2012total}
F.~Kupka, N.~Happenhofer, I.~Higueras, and O.~Koch.
\newblock Total-variation-diminishing implicit--explicit runge--kutta methods for the simulation of double-diffusive convection in astrophysics.
\newblock {\em Journal of Computational Physics}, 231(9):3561--3586, 2012.

\bibitem{leboeuf1979magnetohydrodynamic}
J.~Leboeuf, T.~Tajima, and J.~Dawson.
\newblock A magnetohydrodynamic particle code for fluid simulation of plasmas.
\newblock {\em Journal of Computational Physics}, 31(3):379--408, 1979.

\bibitem{meena2019robust}
A.~K. Meena and H.~Kumar.
\newblock Robust numerical schemes for two-fluid ten-moment plasma flow equations.
\newblock {\em Zeitschrift f{\"u}r angewandte Mathematik und Physik}, 70:1--30, 2019.

\bibitem{meena2017positivity}
A.~K. Meena, H.~Kumar, and P.~Chandrashekar.
\newblock Positivity-preserving high-order discontinuous galerkin schemes for ten-moment gaussian closure equations.
\newblock {\em Journal of Computational Physics}, 339:370--395, 2017.

\bibitem{meng2012classical}
X.~Meng, G.~T{\'o}th, I.~V. Sokolov, and T.~I. Gombosi.
\newblock Classical and semirelativistic magnetohydrodynamics with anisotropic ion pressure.
\newblock {\em Journal of Computational Physics}, 231(9):3610--3622, 2012.

\bibitem{mininni2003role}
P.~D. Mininni, D.~O. G{\'o}mez, and S.~M. Mahajan.
\newblock Role of the hall current in magnetohydrodynamic dynamos.
\newblock {\em The Astrophysical Journal}, 584(2):1120, 2003.

\bibitem{montgomery1988minimum}
D.~Montgomery and L.~Phillips.
\newblock Minimum dissipation rates in magnetohydrodynamics.
\newblock {\em Physical Review A}, 38(6):2953, 1988.

\bibitem{narayan1994advection}
R.~Narayan and I.~Yi.
\newblock Advection-dominated accretion: A self-similar solution.
\newblock {\em arXiv preprint astro-ph/9403052}, 1994.

\bibitem{pareschi2005implicit}
L.~Pareschi and G.~Russo.
\newblock Implicit--explicit runge--kutta schemes and applications to hyperbolic systems with relaxation.
\newblock {\em Journal of Scientific computing}, 25:129--155, 2005.

\bibitem{quataert2003radiatively}
E.~Quataert.
\newblock Radiatively inefficient accretion flow models of sgr a.
\newblock {\em Astronomische Nachrichten: Astronomical Notes}, 324(S1):435--443, 2003.

\bibitem{Ryu1995Numerical}
D.~Ryu and T.~W. Jones.
\newblock Numerical magetohydrodynamics in astrophysics: Algorithm and tests for one-dimensional flow\textasciigrave{}.
\newblock {\em The Astrophysical Journal}, 442:228, 3 1995.

\bibitem{sen2018entropy}
C.~Sen and H.~Kumar.
\newblock Entropy stable schemes for ten-moment gaussian closure equations.
\newblock {\em Journal of Scientific Computing}, 75:1128--1155, 2018.

\bibitem{shu1988efficient}
C.-W. Shu and S.~Osher.
\newblock Efficient implementation of essentially non-oscillatory shock-capturing schemes.
\newblock {\em Journal of computational physics}, 77(2):439--471, 1988.

\bibitem{singh2024entropy}
C.~Singh, A.~Yadav, D.~Bhoriya, H.~Kumar, and D.~S. Balsara.
\newblock Entropy stable finite difference schemes for chew, goldberger and low anisotropic plasma flow equations.
\newblock {\em Journal of Scientific Computing}, 102(2):51, 2025.

\bibitem{spiteri2002new}
R.~J. Spiteri and S.~J. Ruuth.
\newblock A new class of optimal high-order strong-stability-preserving time discretization methods.
\newblock {\em SIAM Journal on Numerical Analysis}, 40(2):469--491, 2002.

\bibitem{spiteri2003non}
R.~J. Spiteri and S.~J. Ruuth.
\newblock Non-linear evolution using optimal fourth-order strong-stability-preserving runge--kutta methods.
\newblock {\em Mathematics and Computers in Simulation}, 62(1-2):125--135, 2003.

\bibitem{zouganelis2004transonic}
I.~Zouganelis, M.~Maksimovic, N.~Meyer-Vernet, H.~Lamy, and K.~Issautier.
\newblock A transonic collisionless model of the solar wind.
\newblock {\em The Astrophysical Journal}, 606(1):542, 2004.

\end{thebibliography}

\end{document}